\newtheorem{thm}{Theorem}[section]
\newtheorem{lem}[thm]{Lemma}
\theoremstyle{definition}
\newtheorem{rem}[thm]{Remark}
\newtheorem*{claim*}{Claim}
\theoremstyle{remark}
\numberwithin{equation}{section}
\title{\vspace{-3cm}\textbf{Inverse medium scattering problems with Kalman filter techniques II. Nonlinear case}}
\author[1]{\rm Takashi Furuya}
\author[2]{\rm Roland Potthast}
\affil[1]{ {\small Department of Mathematics, Hokkaido University, Japan}}
\affil[ ]{{\small Email: takashi.furuya0101@gmail.com}\vspace{3mm}}
\affil[2]{{\small Data Assimilation Unit, Deutscher Wetterdienst, Germany}}
\affil[ ]{{\small Email: Roland.Potthast@dwd.de}}
\date{}
\begin{document}
\maketitle
\begin{abstract}
In this paper, we study the inverse medium scattering problem to reconstruct unknown inhomogeneous medium from far field patterns of scattered waves. In the first part of our work, the linear inverse scattering problem was discussed, while in the second part, we deal with the nonlinear problem. The main idea is to apply the linear Kalman filter to the linearized problem. There are several ways to linearize, which introduce two reconstruction algorithms. Finally, we give numerical examples to demonstrate our proposed method.
\end{abstract}
\date{{\bf Key words}. Inverse acoustic scattering, Inhomogeneous medium, Far field pattern, Tikhonov regularization method, Kalman filter, Levenberg–Marquardt.}
\section{Introduction}
The inverse scattering problem is the problem to determine unknown scatterers by measuring scattered waves that is generated by sending incident waves far away from scatterers. 
It is of importance for many applications, for example medical imaging, nondestructive testing, remote exploration, and geophysical prospecting. 
Due to many applications, the inverse scattering problem has been studied in various ways.
For further readings, we refer to the following books \cite{Cakoni, Chen, ColtonKress, Kirsch, NakamuraPotthast}, which include the summary of classical and recent progress of the inverse scattering problem.
\par
We begin with the mathematical formulation of the scattering problem. 
Let $k>0$ be the wave number, and let $\theta \in \mathbb{S}^{1}$ be incident direction. 
We denote the incident field $u^{inc}(\cdot, \theta)$ with the direction $\theta$ by the plane wave of the form 
\begin{equation}
u^{inc}(x, \theta):=\mathrm{e}^{ikx \cdot \theta}, \ x \in \mathbb{R}^2. \label{1.1}
\end{equation}
Let $Q$ be a bounded domain and let its exterior $\mathbb{R}^2\setminus  \overline{Q}$ be connected. 
We assume that $q \in L^{\infty}(\mathbb{R}^2)$, which refers to the inhomogeneous medium, satisfies $\mathrm{Im}q \geq 0$, and its support $\mathrm{supp}\ q$ is embed into $Q$, that is $\mathrm{supp}\ q \Subset Q$. 
Then, the direct scattering problem is to determine the total field $u=u^{sca}+u^{inc}$ such that
\begin{equation}
\Delta u+k^2(1+q)u=0 \ \mathrm{in} \ \mathbb{R}^2, \label{1.2}
\end{equation}
\begin{equation}
\lim_{r \to \infty} \sqrt{r} \biggl( \frac{\partial u^{sca}}{\partial  r}-iku^{sca} \biggr)=0, \label{1.3}
\end{equation}
where $r=|x|$. 
The {\it Sommerfeld radiation condition} (\ref{1.3}) holds uniformly in all directions $\hat{x}:=\frac{x}{|x|}$. 
Furthermore, the problem (\ref{1.2})--(\ref{1.3}) is equivalent to the {\it Lippmann-Schwinger integral equation}
\begin{equation}
u(x, \theta)=u^{inc}(x, \theta)+k^2\int_{Q}q(y)u(y, \theta)\Phi(x,y)dy, \label{1.4}
\end{equation}
where $\Phi(x,y)$ denotes the fundamental solution to Helmholtz equation in $\mathbb{R}^2$, that is, 
\begin{equation}
\Phi(x,y):= \displaystyle \frac{i}{4}H^{(1)}_0(k|x-y|), \ x \neq y, \label{1.5}
\end{equation}
where $H^{(1)}_0$ is the Hankel function of the first kind of order one. 
It is well known that there exists a unique solution $u^{sca}$ of the problem (\ref{1.2})--(\ref{1.3}), and it has the following asymptotic behaviour,
\begin{equation}
u^{sca}(x, \theta)=\frac{\mathrm{e}^{ikr}}{\sqrt{r}}\Bigl\{ u^{\infty}(\hat{x},\theta)+O\bigl(1/r \bigr) \Bigr\} , \ r \to \infty, \ \ \hat{x}:=\frac{x}{|x|}. \label{1.6}
\end{equation}
The function $u^{\infty}$ is called the {\it far field pattern} of $u^{sca}$, and it has the form
\begin{equation}
u^{\infty}(\hat{x},\theta)=\frac{k^2}{4\pi}\int_{Q}\mathrm{e}^{-ik \hat{x} \cdot y} u(y, \theta)q(y)dy=:\mathcal{F}_{\theta}q(\hat{x}), \label{1.7}
\end{equation}
where the far field mapping $\mathcal{F}_{\theta}:L^{2}(Q) \to L^{2}(\mathbb{S}^{1})$ is defined in the second equality for each incident direction $\theta \in \mathbb{S}^{1}$. 
For further details of these direct scattering problems, we refer to Chapter 8 of \cite{ColtonKress}. 
\par
We consider the inverse scattering problem to reconstruct the function $q$ from the far field pattern $u^{\infty}(\hat{x}, \theta_n)$ for all directions $ \hat{x} \in \mathbb{S}^{1}$ and several directions $\{ \theta_n \}_{n=1}^{N}\subset \mathbb{S}^{1}$ with some $N \in \mathbb{N}$, and one fixed wave number $k>0$.
It is well known that the function $q$ is uniquely determined from the far field pattern $u^{\infty}(\hat{x}, \theta)$ for all $ \hat{x}, \theta \in \mathbb{S}^{1}$ and one fixed $k>0$ (see, e.g., \cite{bukhgeim2008recovering, novikov1988multidimensional, ramm1988recovery}), but the uniqueness for several incident plane wave is an open question. 
For impenetrable obstacle scattering case, if we assume that the shape of scatterer is a polyhedron or ball, then the uniqueness for a single incident plane wave is proved (see \cite{alessandrini2005determining, cheng2003uniqueness, liu2006uniqueness, liu1997inverse}). 
Recently in \cite{alberti2020infinitedimensional}, they showed the Lipschitz stability for inverse medium scattering with finite measurements $\{ u^{\infty}(\hat{x}_{i}, \theta_{j}) \}_{i,j=1,...,N}$ for large $N \in \mathbb{N}$ under the assumption that the true function belongs to a compact and convex subset of finite-dimensional subspace. 
\par
Our problem for equation (\ref{1.7}) with finite measurements $\{ u^{\infty}(\cdot, \theta_n) \}_{n=1}^{N}$ is not only ill-posed, but also nonlinear, that is, the far field mappings $\mathcal{F}_{\theta}$ is nonlinear because $u(\cdot, \theta)$ in (\ref{1.7}) is a solution for the Lippmann-Schwinger integral equation (\ref{1.4}), which depends on $q$. 
Existing methods for solving nonlinear inverse problem can be roughly categorized into two groups: iterative optimization methods and qualitative methods. 
The iterative optimization method (see e.g., \cite{Bakushinsky, ColtonKress, Giorgi, Hohage, Kaltenbacher}) does not require many measurements, however it require the initial guess which is the starting point of the iteration. 
It must be appropriately chosen by a priori knowledge of the unknown function $q$, otherwise, the iterative solution could not converge to the true function. 
On the other hand, the qualitative method such as the linear sampling method \cite{ColtonKirsch}, the no-response test \cite{Honda}, the probe method \cite{Ikehata}, the factorization method \cite{KirschGrinberg}, and the singular sources method \cite{Potthast}, does not require the initial guess and it is computationally faster than the iterative method. 
However, the disadvantage of the qualitative method is to require uncountable many measurements. 
For the survey of the qualitative method, we refer to \cite{NakamuraPotthast}.
Recently in \cite{ito2012direct, Liu_2018}, they suggested the reconstruction method from a single incident plane wave although the rigorous justifications are lacked.
\par
The well known method to solve the nonlinear problem is the {\it Newton Method} (see e.g., \cite{Bakushinsky, ColtonKress, Kaltenbacher, Kirsch, Kress, NakamuraPotthast}), which is a classical method to construct an iterative solution based on the first-order linearization.
A natural approach applying the Newton method to our situation is to put all available measurements $\{ u^{\infty}(\cdot, \theta_n) \}_{n=1}^{N}$ and all far field mappings $\{ \mathcal{F}_{\theta_{n}} \}_{n=1}^{N}$ into one long vectors $\vec{u}^{\infty}$ and $\vec{\mathcal{F}}$, respectively, and to iteratively solve the linearized big system of $\vec{u}^{\infty}=\vec{\mathcal{F}}q$ by the Tikhonov regularization, in other words to apply {\it Levenberg–Marquardt} scheme to $\vec{u}^{\infty}=\vec{\mathcal{F}}q$. 
However, this is computationally expensive when the number $N$ of measurements is increasing in which we have to construct the bigger system.
\par
In this paper, we propose the reconstruction scheme based on the {\it Kalman filter}. 
The Kalman filter (see the original paper \cite{Kalman}) is the algorithm to estimate the unknown state in the dynamics system by using the time sequential measurements. 
The contributions of this paper are followings.
\begin{itemize}
  \item[(A)] We propose the reconstruction algorithm by combination of linearization and Kalman filter, which is equivalent to the Levenberg–Marquardt (see (\ref{4.10})--(\ref{4.14})).
  \item[(B)] We also propose the reconstruction algorithm based on the {\it Extended Kalman Filter} (see (\ref{5.9})--(\ref{5.13})).
\end{itemize}
The algorithm in (A) is proposed by understanding the Levenberg–Marquardt scheme from the viewpoint of the Kalman filter, and the equivalence is proved by the first part of our work \cite{Furuya}, which showed that the Kalman filter is equivalent to the Tikhonov regularization in the case of the linear inverse problem. 
The Extended Kalman filter (see e.g., \cite{Grewal, grewal2010applications, Jazwinski}) is the nonlinear version of the Kalman filter, which idea is to linearize the nonlinear equation and update the state and weight every time to give one incident measurement. 
The algorithm in (B) is different from that in (A) in term of when to linearize the nonlinear equation, and the number of linearization in (B) is larger than that in (A). 
The figure \ref{KFN and EKF} provides an illustration for the differences of (A) and (B).
The advantages of using Kalman filter is that we do not require to construct the big system equation $\vec{u}^{\infty}=\vec{\mathcal{F}}q$, which reduces computational costs. 
Instead, we update not only state, but also the weight of the norm for the state space, which is associated with the update of the covariance matrices of the state in the statistical viewpoint (see Section 5 in \cite{Furuya}).
Numerical experiments in Section \ref{Numerical examples} show that the reconstruction of (B) is robust to noise and its error decrease more rapidly rather than that of (A) although theoretical interpretations for this result is missing in this paper, which would be the focus of future work. 
\par
This paper is organized as follows. 
In Section \ref{Frechet derivative of the far field mapping}, we recall the Fr\'echet derivative of the far field mapping $\mathcal{F}$ and its properties. 
In Section \ref{Linearized problems}, we consider the linearized problem for nonlinear equation, and recall Levenberg–Marquardt method. 
In Section \ref{Levenberg Marquardt and Kalman filter}, we propose two reconstruction algorithms called the Full data Levenberg–Marquardt (FLM) and the Kalman filter Levenberg–Marquardt (KFL), and show that they are equivalent. 
In Section \ref{Iterative Extended Kalman filter}, we propose the reconstruction algorithm called the iterative Extended Kalman filter (EKF). 
Finally in Section \ref{Numerical examples}, we give numerical examples to demonstrate our algorithms.
\section{Fr\'echet derivative of the far field mapping}\label{Frechet derivative of the far field mapping}
The approach for solving the nonlinear equation (\ref{1.7}) often requires the linearization by the Fr\'echet derivative. 
In this section, we briefly recall the Fr\'echet derivative of the far field mapping and its properties.
We denote by $L^{\infty}_{+}(Q):=\{q \in L^{\infty}(Q): \exists q_{0}>0,\ \mathrm{Im}q \geq q_{0} \ \mathrm{a.e} \ \mathrm{on} \ Q \}$.
\par
We define the far filed mapping $\mathcal{F}_{\theta}: L^{\infty}_{+}(Q) \subset L^{2}(Q) \to L^{2}(\mathbb{S}^{1})$ by 
\begin{equation}
\mathcal{F}_{\theta}q(\hat{x}):=\frac{k^2}{4\pi}\int_{Q}\mathrm{e}^{-ik \hat{x} \cdot y} u_{q}(y, \theta)q(y)dy=:u^{\infty}_{q}(\hat{x}, \theta), \ \hat{x} \in \mathbb{S}^1, \label{2.1}
\end{equation}
where the total field $u_{q}(\cdot, \theta)$ is given by the solving the integral equation of (\ref{1.4}).
\begin{lem}\label{Lemma 2.1}
\begin{description}
\item[(1)] $\mathcal{F}_{\theta} \in C^{1}(L^{\infty}_{+}(Q), L^{2}(\mathbb{S}^{1}))$, that is, for any $q \in L^{\infty}_{+}(Q)$, $\mathcal{F}_{\theta}$ is Fr\'echet differentiable at $q$, and denoting the
Fr\'echet derivative by $\mathcal{F}^{\prime}_{\theta}[q]:L^{2}(Q) \to L^{2}(\mathbb{S}^{1})$, the mapping $q \in L^{\infty}_{+}(Q) \mapsto \mathcal{F}^{\prime}_{\theta}[q] \in \mathcal{L}(L^{2}(Q), L^{2}(\mathbb{S}^{1}))$ is continuous, and its derivative $\mathcal{F}^{\prime}_{\theta}[q]$ at $q$ is given by
\begin{equation}
\mathcal{F}^{\prime}_{\theta}[q]m=v^{\infty}_{q,m}, \label{2.2}
\end{equation} 
where $v^{\infty}_{q,m}$ is the far field pattern of the radiating solution $v=v_{q,m}$ such that
\begin{equation}
\Delta v+k^2(1+q)v=-k^{2}mu_{q}(\cdot, \theta) \ \mathrm{in} \ \mathbb{R}^2. \label{2.3}
\end{equation}
\item[(2)] $\mathcal{F}^{\prime}_{\theta}[\cdot]$ is locally bounded.
\end{description}
\end{lem}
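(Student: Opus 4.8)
The plan is to reformulate everything through the Lippmann--Schwinger operator and then differentiate in $q$ by exploiting the smoothness of operator inversion. For a density $\rho$ set $(T\rho)(x):=k^2\int_Q \rho(y)\Phi(x,y)\,dy$ and for $q\in L^\infty(Q)$ let $T_q w:=T(qw)$, so that the integral equation (\ref{1.4}) reads $(I-T_q)u_q=u^{inc}$ and the far field mapping factors as $\mathcal{F}_\theta q=G(q\,u_q)$, where $(Gw)(\hat x):=\frac{k^2}{4\pi}\int_Q e^{-ik\hat x\cdot y}w(y)\,dy$ is a bounded linear operator into $L^2(\mathbb{S}^1)$. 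First I would record the mapping properties I need: $T$ is a bounded (indeed compact, weakly singular) operator on $L^2(Q)$, the bilinear multiplication $(q,w)\mapsto qw$ is bounded from $L^\infty(Q)\times L^2(Q)$ to $L^2(Q)$, and for $q\in L^\infty_+(Q)$ the operator $I-T_q$ is invertible on $L^2(Q)$; the latter is the standard well-posedness of the direct problem, following from compactness, the Fredholm alternative, and the uniqueness forced by $\mathrm{Im}\,q\ge 0$ together with Rellich's lemma.

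With this setup the dependence on $q$ is transparent. Since $q\mapsto T_q$ is linear and bounded from $L^\infty(Q)$ to $\mathcal{L}(L^2(Q))$, its Fr\'echet derivative at any $q$ in the direction $m$ is simply $T_m$. Because inversion $A\mapsto A^{-1}$ is analytic on the open set of invertible operators, the map $q\mapsto (I-T_q)^{-1}$ is Fr\'echet differentiable with derivative $m\mapsto (I-T_q)^{-1}T_m(I-T_q)^{-1}$; concretely this comes from the Neumann expansion $(I-T_q+T_m)^{-1}=(I-T_q)^{-1}+(I-T_q)^{-1}T_m(I-T_q)^{-1}+O(\|m\|_\infty^2)$, valid for small $\|m\|_\infty$. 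Applying this to $u^{inc}$ shows $q\mapsto u_q$ is differentiable with $u'_q[m]=(I-T_q)^{-1}T_m u_q$, i.e. $u'_q[m]$ is exactly the radiating solution $v_{q,m}$ of $(I-T_q)v=T(m\,u_q)$, the integral form of (\ref{2.3}). Differentiating the factorization by the product rule then yields $\mathcal{F}'_\theta[q]m=G\bigl(m\,u_q+q\,u'_q[m]\bigr)=G\bigl(m\,u_q+q\,v_{q,m}\bigr)$, and since $v_{q,m}$ radiates with density $q\,v_{q,m}+m\,u_q$ this is precisely its far field pattern $v^\infty_{q,m}$, giving (\ref{2.2}).

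For the continuity assertion in (1) and the local boundedness in (2), I would read the operator norm off the explicit formula $\mathcal{F}'_\theta[q]m=G\bigl(m\,u_q+q\,(I-T_q)^{-1}T_m u_q\bigr)$. On a ball $\{\|q'-q\|_\infty\le\delta\}\subset L^\infty_+(Q)$ the quantities $\|(I-T_{q'})^{-1}\|$, $\|u_{q'}\|_{L^2}$ and $\|q'\|_\infty$ stay uniformly bounded (the first by continuity of inversion, the second by $u_{q'}=(I-T_{q'})^{-1}u^{inc}$), while $\|T_m\|\lesssim\|m\|_\infty$ and $\|G\|<\infty$; hence $\|\mathcal{F}'_\theta[q']\|_{\mathcal{L}(L^2(Q),L^2(\mathbb{S}^1))}\le C(\delta,q)$, which is exactly local boundedness. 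Continuity of $q'\mapsto\mathcal{F}'_\theta[q']$ then follows by subtracting two such expressions and using that each factor $q'\mapsto u_{q'}$ and $q'\mapsto (I-T_{q'})^{-1}$ is continuous, again by analyticity of inversion.

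The step I expect to be the genuine obstacle is making the remainder estimate uniform and checking the mapping properties carefully, rather than the formal computation. In particular one must confirm that $T$ and $T_m$ are bounded on $L^2(Q)$ despite the logarithmic singularity of $\Phi$ in two dimensions, so that $\mathcal{F}'_\theta[q]$ genuinely extends to all of $L^2(Q)$ even though differentiability is taken in the $L^\infty_+$ topology, and that the Neumann remainder $O(\|m\|_\infty^2)$ is controlled uniformly for $q$ in the chosen neighborhood. Both reduce to the standard weak-singularity estimates for volume potentials, but they are where the analysis, as opposed to the algebra, actually lives.
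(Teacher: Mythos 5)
Your proposal is correct, and it reaches the lemma by a genuinely different route than the paper. The paper's proof outsources its analytic core: it quotes the five estimates (\ref{FD_1})--(\ref{FD_5}) from Lemmas 2.2--2.6 of \cite{Bao2005InverseMS} (boundedness and Lipschitz dependence of $u_{q}$ on $q$, the quadratic remainder bound for $u_{q+m}-u_{q}-v_{q,m}$, and the analogous bounds for $v_{q,m}$), and then simply inserts them into the far-field integral representation to obtain the three displayed pointwise estimates. You instead derive all of this from the factorization $\mathcal{F}_{\theta}q=G(qu_{q})$ with $(I-T_{q})u_{q}=u^{inc}$ and from the analyticity of operator inversion: your Neumann-tail bound is precisely the quoted estimate (\ref{FD_3}), your resolvent-difference identities reproduce (\ref{FD_2}) and (\ref{FD_5}), and your formula $\mathcal{F}^{\prime}_{\theta}[q]m=G(mu_{q}+qv_{q,m})$ is the representation (\ref{2.4}). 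Your route is self-contained, gives real-analyticity of $q\mapsto\mathcal{F}_{\theta}q$ rather than merely $C^{1}$, and --- via the $L^{1}(Q)\to L^{2}(Q)$ mapping property of the weakly singular volume potential (the kernel $\Phi(\cdot,y)$ is square-integrable uniformly in $y$) together with $\|mu_{q}\|_{L^{1}(Q)}\leq\|m\|_{L^{2}(Q)}\|u_{q}\|_{L^{2}(Q)}$, or alternatively $u_{q}\in C(\overline{Q})$ by bootstrapping the Lippmann--Schwinger equation --- it actually controls $\mathcal{F}^{\prime}_{\theta}[q]$ in the $\mathcal{L}(L^{2}(Q),L^{2}(\mathbb{S}^{1}))$ norm that the statement asserts, whereas the paper's estimates are all against $\|m\|_{L^{\infty}(Q)}$ and the extension to $L^{2}$ directions is supplied only implicitly by the kernel formula (\ref{2.8}). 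What the paper's route buys in exchange is brevity and explicit constants with the stated growth $(1+\|q\|_{L^{\infty}(Q)})$ and $(1+\|q\|_{L^{\infty}(Q)})^{2}$, which are the forms it reuses later.

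Two small repairs. First, a sign slip: the perturbed operator is $I-T_{q+m}=I-T_{q}-T_{m}$, so the expansion should read $(I-T_{q}-T_{m})^{-1}=(I-T_{q})^{-1}+(I-T_{q})^{-1}T_{m}(I-T_{q})^{-1}+O(\|m\|_{\infty}^{2})$; as written, $(I-T_{q}+T_{m})^{-1}$ would carry a minus sign on the first-order term, though your subsequent derivative $u^{\prime}_{q}[m]=(I-T_{q})^{-1}T_{m}u_{q}$ is the correct one. Second, the uniformity you flag as the genuine obstacle resolves exactly as you predict: $L^{\infty}_{+}(Q)$ is open in $L^{\infty}(Q)$, so on a small ball around $q$ the resolvents $(I-T_{q'})^{-1}$ are uniformly bounded by the Neumann perturbation argument, which makes the quadratic remainder uniform there; no further idea is needed.
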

\begin{proof}
First, we recall that from Lemmas 2.2, 2.3, 2.4, and 2.6 in \cite{Bao2005InverseMS}, there exists $C_{1},...,C_{5} >0$ depending on $k$ and $Q$ such that 
\begin{equation}
\left\|  u_{q}(\cdot, \theta) \right\|_{L^{2}(Q)} \leq C_{1}(1+\left\| q \right\|_{L^{\infty}(Q)}), \label{FD_1}
\end{equation}
\begin{equation}
\left\|  u_{q_1}(\cdot, \theta)-u_{q_2}(\cdot, \theta) \right\|_{L^{2}(Q)} \leq C_{2}(1+\left\| q_{2} \right\|_{L^{\infty}(Q)})\left\| q_{1}-q_{2} \right\|_{L^{\infty}(Q)}, \label{FD_2}
\end{equation}
\begin{equation}
\left\|  u_{q+m}(\cdot, \theta)-u_{q}(\cdot, \theta) -v_{q,m}(\cdot, \theta) \right\|_{L^{2}(Q)} \leq C_{3}(1+\left\| q \right\|_{L^{\infty}(Q)})\left\| m \right\|^{2}_{L^{\infty}(Q)}, \label{FD_3}
\end{equation}
\begin{equation}
\left\|  v_{q,m}(\cdot, \theta) \right\|_{L^{2}(Q)} \leq C_{4}(1+\left\| q \right\|_{L^{\infty}(Q)})\left\| m \right\|_{L^{\infty}(Q)}, \label{FD_4}
\end{equation}
\begin{equation}
\left\| v_{q_{1},m}(\cdot, \theta)-v_{q_{2},m}(\cdot, \theta) \right\|_{L^{2}(Q)} \leq C_{5}(1+\left\| q_{2} \right\|_{L^{\infty}(Q)})\left\| q_{1}-q_{2} \right\|_{L^{\infty}(Q)}\left\| m \right\|_{L^{\infty}(Q)}. \label{FD_5}
\end{equation}
{\bf(1)} Concerning Fr\'echet differentiability, by using (\ref{FD_2}) and (\ref{FD_3})
\begin{equation}
\begin{split}
& \left| u^{\infty}_{q+m}(\hat{x}, \theta)-u^{\infty}_{q}(\hat{x}, \theta) -v^{\infty}_{q,m}(\hat{x}, \theta) \right| \\
& \le 
\left| \frac{k^{2}}{4\pi} \int_{Q}\left\{ u_{q+m}(y, \theta)(q(y)-m(y))-u_{q}(y, \theta)q(y) -u_{q}(y, \theta)m(y) -v_{q,m}(y, \theta)q(y)\right\} dy  \right| \\
& \le 
C_{6}\big\{ \left\| u_{q+m}(\cdot, \theta) -u_{q}(\cdot, \theta) \right\|_{L^{2}(Q)}\left\| m \right\|_{L^{\infty}(Q)} 
\\
& \hspace{4cm} + \left\| u_{q+m}(\cdot, \theta) -u_{q}(\cdot, \theta) -v_{q,m}(\cdot, \theta) \right\|_{L^{2}(Q)}\left\| q \right\|_{L^{\infty}(Q)} \big\}
\\
& \le C_{7} (1+\left\| q \right\|_{L^{\infty}(Q)})\left\| m \right\|^{2}_{L^{\infty}(Q)}. \label{Frechet diff for far field}
\end{split}
\end{equation}
Concerning the continuity of the derivative, by using (\ref{FD_2}), (\ref{FD_4}), and (\ref{FD_5})
\begin{equation}
\begin{split}
& \left| v^{\infty}_{q_{1},m}(\hat{x}, \theta)-v^{\infty}_{q_{2},m}(\hat{x}, \theta) \right| \\
& \le 
\left| \frac{k^{2}}{4\pi} \int_{Q}\left\{ u_{q_{1}}(y, \theta)m(y)+v_{q_{1},m}(y, \theta)q_{1}(y) -u_{q_{2}}(y, \theta)m(y) -v_{q_{2},m}(y, \theta)q_{2}(y)\right\} dy  \right| \\
& \le 
C_{8}\big\{ \left\| u_{q_{1}}(\cdot, \theta) -u_{q_{2}}(\cdot, \theta) \right\|_{L^{2}(Q)}\left\| m \right\|_{L^{\infty}(Q)} + \left\| v_{q_{1},m} \right\|_{L^{2}(Q)} \left\| q_{1} -q_{2} \right\|_{L^{\infty}(Q)}
\\
& \hspace{4cm}  + \left\| v_{q_{1},m}(\cdot, \theta) -v_{q_{2},m}(\cdot, \theta)  \right\|_{L^{2}(Q)}\left\| q_{2} \right\|_{L^{\infty}(Q)} \big\} 
\\ 
& \le C_{9} (1+\left\| q_{1} \right\|_{L^{\infty}(Q)}+\left\| q_{2} \right\|_{L^{\infty}(Q)} )\left\| m \right\|_{L^{\infty}(Q)} \left\| q_{1}-q_{2} \right\|_{L^{\infty}(Q)}
\end{split}
\end{equation}
{\bf(2)} Concerning the local boundedness, by using (\ref{FD_1}) and (\ref{FD_4})
\begin{equation}
\begin{split}
& \left| v^{\infty}_{q,m}(\hat{x}, \theta) \right| \\
& \le 
C_{10}\big\{ \left\| u_{q}(\cdot, \theta) \right\|_{L^{2}(Q)}\left\| m \right\|_{L^{\infty}(Q)} + \left\| v_{q,m} \right\|_{L^{2}(Q)} \left\| q \right\|_{L^{\infty}(Q)}\big\} 
\\ 
& \le C_{11} (1+\left\| q \right\|_{L^{\infty}(Q)})^{2}\left\| m \right\|_{L^{\infty}(Q)}
\end{split}
\end{equation}
\end{proof}
We observe the integral kernel of the linear operator $\mathcal{F}^{\prime}_{\theta}[q]$.
The far field pattern $v^{\infty}=v^{\infty}(\cdot, \theta)$ is of the form 
\begin{equation}
v^{\infty}_{q,m}(\hat{x}, \theta)= \frac{k^2}{4\pi}\int_{Q}\mathrm{e}^{-ik \hat{x} \cdot y}\left[ m(y)u_{q}(y,\theta)+q(y)v(y, \theta)\right]dy. \label{2.4}
\end{equation}
Here, we denote the fundamental solution for $-\Delta -k^2(1+q)$ by $\Phi_{q}(x,y)$, which is of the form
\begin{equation}
\Phi_{q}(x, y)=\Phi(x, y) + w(x, y), \ x \neq y, \label{2.5}
\end{equation}
where $w=w(\cdot, y)$ is the unique solution of the following integral equation
\begin{equation}
w(x, y)=k^2\int_{Q}\Phi(x,z)q(z)\left( w(z, y)+\Phi(z,y) \right)dz, \ x \in \mathbb{R}^2. \label{2.6}
\end{equation}
By using the fundamental solution $\Phi_q$, the radiating solution $v=v(\cdot, \theta)$ can be of the form
\begin{equation}
v(x, \theta)=k^2\int_{Q}\Phi_{q}(x,y)m(y)u_{q}(y, \theta)dy, \ x \in \mathbb{R}^2. \label{2.7}
\end{equation}
By combining (\ref{2.4}) and (\ref{2.7}), and using the Fubini's theorem, we conclude that
\begin{equation}
\mathcal{F}^{\prime}_{\theta}[q]m(\hat{x}) = \frac{k^2}{4\pi}\int_{Q} K_{q}(\hat{x}, y) u_{q}(y, \theta)m(y)dy, \  \hat{x} \in \mathbb{S}^1, \label{2.8}
\end{equation}
where the function $K_{q}$ is defined by 
\begin{equation}
K_{q}(\hat{x}, y) := \mathrm{e}^{-ik \hat{x} \cdot y}+k^{2}\int_{Q}\mathrm{e}^{-ik \hat{x} \cdot z} q(z)\Phi_{q}(z, y) dz. \label{2.9}
\end{equation}
We denote the far field mappings $\mathcal{F}: L^{\infty}_{+}(Q) \subset L^{2}(Q) \to L^{2}(\mathbb{S}^{1}\times \mathbb{S}^{1})$ by $\mathcal{F}q(\hat{x}, \theta):=\mathcal{F}_{\theta}q(\hat{x})$, and from (\ref{Frechet diff for far field}), the mapping $\mathcal{F}: L^{\infty}_{+}(Q) \to L^{2}(\mathbb{S}^{1}\times \mathbb{S}^{1})$ is also $C^{1}$. 
The following lemma is proved by the same argument in Section 11 of \cite{ColtonKress} and Section 2 of  \cite{Bao2005InverseMS}.
\begin{lem}\label{prop of Frechet}
\begin{description}
\item[(1)] $\mathcal{F}: \{q \in L^{\infty}(Q): \mathrm{Im}q \geq 0 \ \mathrm{a.e} \ \mathrm{on} \ Q \} \to L^{2}(\mathbb{S}^{1} \times \mathbb{S}^{1})$ is injective.
\item[(2)] $\mathcal{F}\in C^{1}(L^{\infty}_{+}(Q), L^{2}(\mathbb{S}^{1} \times \mathbb{S}^{1}))$, and its derivative  $\mathcal{F}^{\prime}[q]: L^{2}(Q) \to L^{2}(\mathbb{S}^{1} \times \mathbb{S}^{1})$ at $q$ is injective.
\end{description}
\end{lem}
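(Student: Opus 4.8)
The $C^1$ assertion in \textbf{(2)} is already contained in Lemma \ref{Lemma 2.1} and the estimate (\ref{Frechet diff for far field}), so the real content of the lemma is the two injectivity statements. I would prove both by the same three-step scheme that underlies the classical uniqueness theory: (i) translate the vanishing of far field data into the vanishing of a radiating field outside $Q$ via Rellich's lemma, (ii) convert this into an orthogonality relation of the form $\int_Q(\cdots)\,u_q(\cdot,\theta)\,u_q(\cdot,\theta')\,dy=0$ using Green's second identity, and (iii) kill the unknown ($q_1-q_2$ or $m$) by the completeness of products of interior solutions. Only step (iii) is genuinely deep.

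For \textbf{(1)}, assume $\mathcal F q_1=\mathcal F q_2$, i.e. $u^\infty_{q_1}(\hat x,\theta)=u^\infty_{q_2}(\hat x,\theta)$ for all $\hat x,\theta\in\mathbb S^1$. By Rellich's lemma and the connectedness of $\mathbb R^2\setminus\overline Q$, the scattered fields coincide there, so for every direction $u_{q_1}(\cdot,\theta)$ and $u_{q_2}(\cdot,\theta)$ share the same Cauchy data on $\partial Q$. I would then apply Green's second identity on $Q$ to $u_{q_1}(\cdot,\theta)$ (a solution of $\Delta\cdot+k^2(1+q_1)\cdot=0$) and $u_{q_2}(\cdot,\theta')$; replacing the Cauchy data of $u_{q_2}(\cdot,\theta')$ by that of $u_{q_1}(\cdot,\theta')$ turns the boundary term into the Green pairing of two solutions of one and the same equation, which vanishes, and the interior term leaves
\[
\int_Q\bigl(q_1(y)-q_2(y)\bigr)\,u_{q_1}(y,\theta)\,u_{q_2}(y,\theta')\,dy=0\qquad\text{for all }\theta,\theta'\in\mathbb S^1 .
\]
A Herglotz density argument extends this to arbitrary interior solutions $v_1$ of the $q_1$-equation and $v_2$ of the $q_2$-equation, and the completeness of the products $v_1v_2$ forces $q_1=q_2$.

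For the injectivity of $\mathcal F'[q]$, suppose $\mathcal F'[q]m=0$. By (\ref{2.2}) the radiating solution $v_{q,m}(\cdot,\theta)$ of (\ref{2.3}) has vanishing far field for every $\theta$, so Rellich's lemma gives $v_{q,m}(\cdot,\theta)\equiv0$ in $\mathbb R^2\setminus\overline Q$, hence zero Cauchy data on $\partial Q$. Pairing (\ref{2.3}) with an arbitrary solution $w$ of $\Delta w+k^2(1+q)w=0$ through Green's identity now annihilates the boundary contribution outright and yields
\[
\int_Q m(y)\,u_q(y,\theta)\,w(y)\,dy=0\qquad\text{for all such }w\text{ and all }\theta\in\mathbb S^1 .
\]
Choosing $w=u_q(\cdot,\theta')$ and running the same density-plus-completeness argument as in \textbf{(1)} gives $m=0$, which is the injectivity of $\mathcal F'[q]$.

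The crux --- and the only step that is not elementary --- is the completeness of products invoked at the end of each part: one must know that finite linear combinations of products $u_q(\cdot,\theta)\,u_q(\cdot,\theta')$ of interior solutions are dense enough in $L^1(Q)$ to annihilate an $L^\infty$ density. In the two-dimensional, single--fixed--frequency regime this is exactly the hard analytic input supplied by complex geometric optics solutions, as in \cite{bukhgeim2008recovering}, combined with the density of plane-wave (Herglotz) total fields among all interior solutions. These are precisely the ingredients assembled in Section 11 of \cite{ColtonKress} and Section 2 of \cite{Bao2005InverseMS}; the surrounding steps (Rellich's lemma, interior elliptic regularity, and the Green's identity bookkeeping) are routine once $\mathrm{supp}\,q,\mathrm{supp}\,m\Subset Q$ guarantees that all fields solve the free Helmholtz equation in a neighbourhood of $\partial Q$.
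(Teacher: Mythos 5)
Your proposal is correct and coincides with the paper's proof in substance: the paper gives no argument of its own, stating only that the lemma ``is proved by the same argument in Section 11 of \cite{ColtonKress} and Section 2 of \cite{Bao2005InverseMS}'', and the scheme you spell out --- Rellich's lemma, Green's identity orthogonality relations, denseness of Herglotz/total fields, and completeness of products of solutions (supplied at fixed frequency in 2D by the CGO machinery of \cite{bukhgeim2008recovering}, which the paper also cites) --- is precisely that classical argument. You have simply made explicit what the paper delegates to the references, correctly isolating the completeness-of-products step as the only nontrivial ingredient.
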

By Theorem 2.1 of \cite{BOURGEOIS2013187}, we have the following stability.
\begin{lem}\label{Lemma 2.3}
Let $W$ be a finite dimensional
subspace of $L^{2}(Q)$, and Let $K$ be a compact and convex subset of $W \cap L^{\infty}_{+}(Q)$. Then, there exists a constant $C>0$ such that
\begin{equation}
\left\| q_{1} - q_{2} \right\|_{L^{2}(Q)} \leq C \left\| \mathcal{F}q_{1} - \mathcal{F}q_{2} \right\|_{L^{2}(\mathbb{S}^{1} \times \mathbb{S}^{1})}, \ q_{1}, q_{2} \in K.
\end{equation}
\end{lem}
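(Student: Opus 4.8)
The plan is to obtain the estimate as a concrete instance of an abstract Lipschitz stability principle for $C^1$ maps that are injective, together with injective Fréchet derivatives, on a compact convex subset of a finite-dimensional space. All the structural hypotheses are already supplied by the preceding lemmas, so the task reduces to assembling them and running the standard compactness--contradiction argument; I would verify the hypotheses explicitly and then sketch that argument for completeness.

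First I would record the ingredients. By Lemma~\ref{prop of Frechet}(1) the map $\mathcal{F}$ is injective on $\{q\in L^{\infty}(Q):\mathrm{Im}\,q\ge 0\}$, hence in particular on $K$; by Lemma~\ref{prop of Frechet}(2) it is $C^{1}$ and each derivative $\mathcal{F}^{\prime}[q]:L^{2}(Q)\to L^{2}(\mathbb{S}^{1}\times\mathbb{S}^{1})$ is injective; and by Lemma~\ref{Lemma 2.1} the assignment $q\mapsto\mathcal{F}^{\prime}[q]$ is continuous. A key preliminary reduction is that, since $K\subset W$ with $W$ finite-dimensional, the $L^{2}(Q)$ and $L^{\infty}(Q)$ norms are equivalent on $W$; thus $L^{2}$-convergence of elements of $W$ is the same as $L^{\infty}$-convergence, which lets me freely invoke the $L^{\infty}$-based continuity and boundedness estimates of Lemma~\ref{Lemma 2.1} while proving an $L^{2}$-based statement.

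The core argument proceeds by contradiction. If no such $C$ exists, there are sequences $q_{1}^{(n)},q_{2}^{(n)}\in K$ with $\|q_{1}^{(n)}-q_{2}^{(n)}\|_{L^{2}(Q)}>n\,\|\mathcal{F}q_{1}^{(n)}-\mathcal{F}q_{2}^{(n)}\|_{L^{2}(\mathbb{S}^{1}\times\mathbb{S}^{1})}$. Using compactness of $K$ I pass to subsequences with $q_{i}^{(n)}\to q_{i}^{*}\in K$, and using compactness of the unit sphere in the finite-dimensional space $W$ I may also assume the normalized differences $h_{n}:=(q_{1}^{(n)}-q_{2}^{(n)})/\|q_{1}^{(n)}-q_{2}^{(n)}\|_{L^{2}(Q)}$ converge to some $h$ with $\|h\|_{L^{2}(Q)}=1$. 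If $q_{1}^{*}\neq q_{2}^{*}$, the denominators stay bounded below, so the contradiction hypothesis forces $\mathcal{F}q_{1}^{*}=\mathcal{F}q_{2}^{*}$, whence $q_{1}^{*}=q_{2}^{*}$ by injectivity of $\mathcal{F}$, a contradiction. Hence necessarily $q_{1}^{*}=q_{2}^{*}=:q^{*}$, and the analysis must descend to the derivative.

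In this degenerate case I would use convexity of $K$ (so the segments stay in $K$) and the fundamental theorem of calculus for the $C^{1}$ map $\mathcal{F}$ to write
$$\mathcal{F}q_{1}^{(n)}-\mathcal{F}q_{2}^{(n)}=\int_{0}^{1}\mathcal{F}^{\prime}\bigl[q_{2}^{(n)}+t(q_{1}^{(n)}-q_{2}^{(n)})\bigr]\bigl(q_{1}^{(n)}-q_{2}^{(n)}\bigr)\,dt,$$
and then divide by $\|q_{1}^{(n)}-q_{2}^{(n)}\|_{L^{2}(Q)}$ so that the integrand becomes $\mathcal{F}^{\prime}[\,\cdot\,]h_{n}$. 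Since the base points $q_{2}^{(n)}+t(q_{1}^{(n)}-q_{2}^{(n)})$ converge to $q^{*}$ uniformly in $t$ and $h_{n}\to h$, continuity of $q\mapsto\mathcal{F}^{\prime}[q]$ permits passage to the limit, giving $\mathcal{F}^{\prime}[q^{*}]h=0$; the left-hand side tends to $0$ by the contradiction hypothesis, and injectivity of $\mathcal{F}^{\prime}[q^{*}]$ then yields $h=0$, contradicting $\|h\|_{L^{2}(Q)}=1$. I expect the main obstacle to lie precisely in this limiting step: one must secure convergence of the derivatives \emph{uniformly in} $t$ along the collapsing segments and control the interchange of limit and integral, which is exactly where the finite-dimensionality of $W$ (norm equivalence) and the continuity of $\mathcal{F}^{\prime}[\cdot]$ from Lemma~\ref{Lemma 2.1} are indispensable.
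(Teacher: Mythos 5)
Your proposal is correct and follows the paper's own route: the paper proves Lemma \ref{Lemma 2.3} simply by invoking Theorem 2.1 of \cite{BOURGEOIS2013187}, and your compactness--contradiction argument is precisely the standard proof of that abstract stability theorem, instantiated with the injectivity of $\mathcal{F}$ and $\mathcal{F}^{\prime}[q]$ from Lemma \ref{prop of Frechet} and the $C^{1}$-smoothness and continuity of $q \mapsto \mathcal{F}^{\prime}[q]$ from Lemma \ref{Lemma 2.1}. Your explicit appeal to norm equivalence on the finite-dimensional subspace $W$ --- both for compactness of the normalized differences $h_{n}$ and to reconcile the $L^{\infty}$-based derivative estimates with the $L^{2}$ statement --- is exactly where the finite-dimensionality hypothesis of the cited theorem enters (the estimate is false for, e.g., a compact injective linear map on an infinite-dimensional Hilbert cube), so your reconstruction is complete and faithful.
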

Let $\{ \theta_{i}: i \in \mathbb{N} \}$ be dense in $\mathbb{S}^{1}$. We denote $\vec{\mathcal{F}}_{N}:L^{2}(Q) \to L^{\infty}(\mathbb{S}^{1})^{N}$ by  $\vec{\mathcal{F}}_{N}(q):=\left(
\begin{array}{cc}
\mathcal{F}_{\theta_{1}}q \\
\vdots \\
\mathcal{F}_{\theta_{N}}q
\end{array}
\right)$. 
Following lemma is proved by the same argument in Theorem 7 of \cite{alberti2020infinitedimensional}.
\begin{lem}\label{Lipchotz stability}
Let $W$ be a finite dimensional
subspace of $L^{2}(Q)$, and Let $K$ be a compact and convex subset of $W \cap L^{\infty}_{+}(Q)$.
Then, for large $N \in \mathbb{N}$, there exists a constant $C_{N}>0$ such that
\begin{equation}
\left\| q_{1} - q_{2} \right\|_{L^{2}(Q)} \leq C_{N} \left\| \vec{\mathcal{F}}_{N}q_{1} - \vec{\mathcal{F}}_{N}q_{2} \right\|_{L^{2}(\mathbb{S}^{1})^{N}}, \ q_{1}, q_{2} \in K.
\end{equation}
\end{lem}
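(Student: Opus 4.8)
The plan is to follow the compactness-and-contradiction strategy of Theorem 7 in \cite{alberti2020infinitedimensional}, using the continuous-data stability of Lemma \ref{Lemma 2.3} as the engine and the injectivity of the Fr\'echet derivative (Lemma \ref{prop of Frechet}) to control the behaviour near the diagonal $q_1 = q_2$. For each $N$ set
\begin{equation*}
c_N := \inf_{\substack{q_1, q_2 \in K \\ q_1 \neq q_2}} \frac{\| \vec{\mathcal{F}}_N q_1 - \vec{\mathcal{F}}_N q_2 \|_{L^2(\mathbb{S}^1)^N}}{\| q_1 - q_2 \|_{L^2(Q)}}.
\end{equation*}
Since $\| \vec{\mathcal{F}}_{N+1}(\cdot) \|^2 = \| \vec{\mathcal{F}}_N(\cdot) \|^2 + \| \mathcal{F}_{\theta_{N+1}}(\cdot) \|^2$, the sequence $c_N$ is nondecreasing, so it suffices to prove that $c_\infty := \sup_N c_N > 0$: then $c_{N_0} > 0$ for some $N_0$, and $C_N := 1/c_N \le 1/c_{N_0}$ works for every $N \ge N_0$.

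Suppose, for contradiction, that $c_\infty = 0$; then $c_N = 0$ for every $N$, and I may choose $p_N, r_N \in K$ with $p_N \neq r_N$ and
\begin{equation*}
\sum_{i=1}^{N} \| \mathcal{F}_{\theta_i} p_N - \mathcal{F}_{\theta_i} r_N \|_{L^2(\mathbb{S}^1)}^2 < \frac{1}{N^2} \| p_N - r_N \|_{L^2(Q)}^2 .
\end{equation*}
Write $m_N := (p_N - r_N)/\| p_N - r_N \|_{L^2(Q)} \in W$, so $\| m_N \|_{L^2(Q)} = 1$. Since $K$ is compact and the unit sphere of the finite-dimensional space $W$ is compact, after passing to a subsequence I may assume $p_N \to p \in K$, $r_N \to r \in K$, and $m_N \to m$ with $\| m \|_{L^2(Q)} = 1$. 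The displayed bound gives, for each fixed $i$ and all $N \ge i$, that $\| \mathcal{F}_{\theta_i} p_N - \mathcal{F}_{\theta_i} r_N \|_{L^2(\mathbb{S}^1)} / \| p_N - r_N \|_{L^2(Q)} < 1/N \to 0$. I then distinguish two cases according to whether the limit $\| p - r \|_{L^2(Q)} = \lim_N \| p_N - r_N \|_{L^2(Q)}$ is positive or zero.

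If $\| p - r \|_{L^2(Q)} > 0$, then $\| \mathcal{F}_{\theta_i} p_N - \mathcal{F}_{\theta_i} r_N \|_{L^2(\mathbb{S}^1)} \to 0$ for each fixed $i$, and the continuity of $\mathcal{F}_{\theta_i}$ (Lemma \ref{Lemma 2.1}) yields $\mathcal{F}_{\theta_i} p = \mathcal{F}_{\theta_i} r$ for every $i$. Because $\{\theta_i\}$ is dense in $\mathbb{S}^1$ and $\theta \mapsto \mathcal{F}_\theta q$ is continuous as an $L^2(\mathbb{S}^1)$-valued map, this forces $\mathcal{F}_\theta p = \mathcal{F}_\theta r$ for all $\theta$, i.e. $\mathcal{F} p = \mathcal{F} r$; Lemma \ref{Lemma 2.3} then gives $p = r$, a contradiction. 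The remaining, and genuinely delicate, case is $\| p - r \|_{L^2(Q)} = 0$ (the diagonal). Here convexity of $K$ lets me write, for each fixed $i$,
\begin{equation*}
\frac{\mathcal{F}_{\theta_i} p_N - \mathcal{F}_{\theta_i} r_N}{\| p_N - r_N \|_{L^2(Q)}} = \int_0^1 \mathcal{F}_{\theta_i}'[\, r_N + t(p_N - r_N)\,]\, m_N \, dt ,
\end{equation*}
the segment staying inside $K \subset L^\infty_+(Q)$. Since $r_N + t(p_N - r_N) \to p$ uniformly in $t$ and $m_N \to m$, the continuity and local boundedness of $\mathcal{F}_{\theta_i}'[\cdot]$ (Lemma \ref{Lemma 2.1}) let me pass to the limit under the integral and conclude $\mathcal{F}_{\theta_i}'[p]\, m = 0$ for every $i$. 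Density of $\{\theta_i\}$ together with continuity in $\theta$ upgrades this to $\mathcal{F}'[p]\, m = 0$, and the injectivity of $\mathcal{F}'[p]$ (Lemma \ref{prop of Frechet}(2)) forces $m = 0$, contradicting $\| m \|_{L^2(Q)} = 1$.

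I expect the diagonal case to be the main obstacle: it is the step where the \emph{linearized} injectivity (rather than the global stability) is indispensable, and where one must ensure that the Fr\'echet expansion, the interchange of limit and integral, and the passage from the dense directions $\{\theta_i\}$ to all of $\mathbb{S}^1$ are all uniform in the approaching sequence. The finite dimensionality of $W$ and the compactness of $K$ are exactly what make the normalized secants $m_N$ precompact, so these limits exist; once that is secured, the two contradictions close the argument and yield $c_\infty > 0$.
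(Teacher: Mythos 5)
Your proof is correct, but it takes a genuinely different route from the paper. The paper does not argue by compactness directly: it sets up the reproducing-kernel structure of $H^{1}(\mathbb{S}^{1})\hookrightarrow C(\mathbb{S}^{1})$, forms the projections $P_{G_{N}}$ onto $\mathrm{span}\{k_{\theta_{i}}\}$ with the point-evaluation bound $\left\| P_{G_{N}}f \right\|_{H^{1}(\mathbb{S}^{1})} \leq C_{N} \left\| (f(\theta_{1}),\dots,f(\theta_{N})) \right\|_{2}$, lifts this to the Bochner space $L^{2}(\mathbb{S}^{1};H^{1}(\mathbb{S}^{1}))$ via $Q_{N}$, and then invokes the abstract Theorem 2 of \cite{alberti2020infinitedimensional} (fed by Lemmas \ref{Lemma 2.1} and \ref{Lemma 2.3}) as a black box, finally unwinding the norms to reach the finite-data estimate. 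You instead inline the mechanism that underlies that abstract theorem: the normalized-secant compactness argument, with Lemma \ref{Lemma 2.3} (or injectivity of $\mathcal{F}$) killing the off-diagonal case and the injectivity of $\mathcal{F}^{\prime}[p]$ from Lemma \ref{prop of Frechet} killing the diagonal case, the convexity of $K$ entering exactly where you use the integral mean-value formula. Your version is more self-contained and elementary — it needs no RKHS or Bochner-space apparatus — at the price of being purely non-constructive: the threshold $N_{0}$ and the constants $C_{N}=1/c_{N}$ emerge from a contradiction, whereas the paper's route makes structurally visible why dense directions $\{\theta_{i}\}$ and large $N$ suffice (point evaluations in $H^{1}(\mathbb{S}^{1})$ and $P_{G_{N}}\to I$) and keeps the hard analysis in a citable result. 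Two small points you gloss over, both repairable with one line each: Lemma \ref{Lemma 2.1} states continuity and differentiability with respect to the $L^{\infty}$ norm, so you should note that $L^{2}$- and $L^{\infty}$-convergence coincide on the finite-dimensional space $W\cap L^{\infty}(Q)$ (which also gives $m\in L^{\infty}(Q)$ and the uniform $L^{\infty}$ bound on $K$ used in your dominated-convergence step); and the upgrade from the dense set $\{\theta_{i}\}$ to all $\theta\in\mathbb{S}^{1}$ needs continuity of $\theta\mapsto \mathcal{F}_{\theta}q$ and $\theta\mapsto\mathcal{F}^{\prime}_{\theta}[p]m$, which holds since the far-field data are analytic in $(\hat{x},\theta)$, as the paper remarks at the start of its proof.
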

\begin{proof}
We follow the proof of Theorem 7 of \cite{alberti2020infinitedimensional}.
We first remark that the far-field patterns are analytic functions in $\mathbb{S}^{1} \times \mathbb{S}^{1}$. 
Since $\mathrm{dim}(\mathbb{S}^{1})=1$, and $1>\frac{1}{2}$, $H^{1}(\mathbb{S}^{1})$ is continuously embedded into $C(\mathbb{S}^{1})$, and so it is a reproducing kernel Hilbert space consisting of continuous functions. 
Let $P_{G_{N}}:H^{1}(\mathbb{S}^{1}) \to H^{1}(\mathbb{S}^{1})$ be the projection onto $G_{N}$ defined by
\begin{equation}
G_{N}:= \mathrm{span}\{ k_{\theta_{i}}: i=1,...,N \}
\end{equation}
where the basis $k_{\theta_{i}}$ is unique solution of $f(\theta_{i})=\langle f, k_{\theta_{i}} \rangle_{H^{1}(\mathbb{S}^{1})}$.
Then by Example 2 of \cite{alberti2020infinitedimensional}, $P_{G_{N}} \to I_{H^{1}(\mathbb{S}^{1})}$ as $N \to \infty$ and
\begin{equation}
\left\| P_{G_{N}}f \right\|_{H^{1}(\mathbb{S}^{1})} \leq C_{N} \left\| (f(\theta_{1}),...,f(\theta_{N})) \right\|_{2}. \label{projection ineq}
\end{equation}
Let $P_{L^{2}(\mathbb{S}^{1}; H^{1}(\mathbb{S}^{1}) )}: L^{2}(\mathbb{S}^{1} \times \mathbb{S}^{1}) \to L^{2}(\mathbb{S}^{1} \times \mathbb{S}^{1})$ be the projection onto the Bochner space $L^{2}(\mathbb{S}^{1}; H^{1}(\mathbb{S}^{1}) ) \subset L^{2}(\mathbb{S}^{1} \times \mathbb{S}^{1})$.
We define the bounded linear operator $Q_{N}: L^{2}(\mathbb{S}^{1} \times \mathbb{S}^{1}) \to L^{2}(\mathbb{S}^{1} \times \mathbb{S}^{1})$ by $Q_{N}g(\hat{x},\theta)=P_{G_{N}}[P_{L^{2}(\mathbb{S}^{1}; H^{1}(\mathbb{S}^{1}))}g(\hat{x}, \cdot)](\theta)$. 
Then, $Q_{N}\big|_{L^{2}(\mathbb{S}^{1}; H^{1}(\mathbb{S}^{1}) )} \to I_{L^{2}(\mathbb{S}^{1}; H^{1}(\mathbb{S}^{1}) )}$.
By Lemmas \ref{Lemma 2.1} and \ref{Lemma 2.3}, we can apply Theorem 2 of \cite{alberti2020infinitedimensional} (as $T=L^{2}(\mathbb{S}^{1} \times \mathbb{S}^{1})$, $\tilde{Y}=L^{2}(\mathbb{S}^{1}; H^{1}(\mathbb{S}^{1}))$ ), which implies that there exists a large $N \in \mathbb{N}$ such that by using (\ref{projection ineq})
\begin{equation}
\begin{split}
& \left\| q_{1}-q_{2} \right\|  \le 
C\left\| Q_{N} u^{\infty}_{q_{1}} -Q_{N}u^{\infty}_{q_{2}} \right\|_{L^{2}(\mathbb{S}^{1} \times \mathbb{S}^{1})}\le C \left\| P_{G_N} u^{\infty}_{q_{1}} -P_{G_N}u^{\infty}_{q_{2}} \right\|_{L^{2}(\mathbb{S}^{1}; H^{1}(\mathbb{S}^{1})) }
\\ 
& = C \left\{ \int_{\mathbb{S}^{1}}\left\| P_{G_N} [u^{\infty}_{q_{1}}(\hat{x}, \cdot)] -P_{G_N} [u^{\infty}_{q_{2}}(\hat{x}, \cdot)] \right\|^{2}_{H^{1}(\mathbb{S}^{1})} ds(\hat{x}) \right\}^{1/2}
\\ 
& \le C_{N} \left\{ \sum_{i=1}^{N} \int_{\mathbb{S}^{1}}\left| u^{\infty}_{q_{1}}(\hat{x}, \theta_{i}) - u^{\infty}_{q_{2}}(\hat{x}, \theta_{i}) \right|^{2} ds(\hat{x}) \right\}^{1/2} \leq C_{N} \left\| \vec{\mathcal{F}}_{N}q_{1} - \vec{\mathcal{F}}_{N}q_{2} \right\|_{L^{2}(\mathbb{S}^{1})^{N}}
\end{split}
\end{equation}
for $q_{1}, q_{2} \in K$.
\end{proof}
\section{Linearized problems}\label{Linearized problems}
In this section, we consider the linearized problem for nonlinear equation, and recall Levenberg–Marquardt method.
Let $X$ and $Y$ be Hilbert spaces over complex variables $\mathbb{C}$ which correspond to the state space $L^{2}(Q)$ of the inhomogeneous medium function $q$, and the observation space $L^{2}(\mathbb{S}^{1})$ of the far field pattern $u^{\infty}$, respectively. 
Let $A:X \to Y$ be a nonlinear observation operator which corresponds to the far field mapping $\mathcal{F}$. 
\par
For give $f \in Y$, we seek the solution $\varphi \in X$ such that
\begin{equation}
A(\varphi) = f. \label{3.1}
\end{equation} 
We assume that we have an initial guess $\varphi_0 \in X$, which is a starting point of the algorithm, and is appropriately determined by a priori information of the true solution $\varphi^{true}$ of (\ref{3.1}). 
We also assume that the nonlinear mapping $A$ is Fr\'echet differentiable at $\varphi_0$, which implies that 
\begin{equation}
A(\varphi) = A(\varphi_0) + A^{\prime}[\varphi_0](\varphi - \varphi_0) + r(\varphi - \varphi_0), \label{3.2}
\end{equation}
where the linear bounded operator $A^{\prime}[\varphi_0]:X \to Y$ is the Fr\'echet derivative of the nonlinear mapping $A$ at $\varphi_0$, and $r:X \to Y$ is some mapping corresponding to the remainder term such that $r(h)=o(h)$ as $\left\| h \right\|  \to 0$. 
In the case to seek the solution $\varphi$ close to the initial guess $\varphi_0$, we can omit the remainder term $r$ because its influence is small. 
Then, we have the following linearized problem of (\ref{3.1}).
\begin{equation}
A^{\prime}[\varphi_0](\varphi - \varphi_0) = f-A(\varphi_0). \label{3.3}
\end{equation}
Although the problem become linear, the equation (\ref{3.3}) may be ill-posed because the Fr\'echet derivative $A^{\prime}[\varphi_0]$ of $A$ is not generally invertible. 
Then, by the regularization method (see e.g., Chapter 4 of \cite{ColtonKress} and Chapter 3 of \cite{NakamuraPotthast}), we have the regularized solution $\varphi_{\alpha}$ of (\ref{3.3})
\begin{equation}
\varphi_{\alpha}:=\varphi_0 + \left(\alpha I + A^{\prime}[\varphi_0]^{*}A^{\prime}[\varphi_0]  \right)^{-1}A^{\prime}[\varphi_0]^{*}\left( f - A(\varphi_0) \right), \label{3.8}
\end{equation}
where $\alpha>0$ is a regularization parameter. 
Furthermore, we have an iterative algorithm for $i \in \mathbb{N}_{0}$
\begin{equation}
\varphi_{i+1}=\varphi_{i} + \left(\alpha_{i} I + A^{\prime}[\varphi_{i}]^{*}A^{\prime}[\varphi_{i}]  \right)^{-1}A^{\prime}[\varphi_{i}]^{*}\left( f - A(\varphi_{i}) \right), \label{3.9}
\end{equation}
which is known as the {\it Levenberg–Marquardt} method (see e.g., \cite{Hanke_1997, Kaltenbacher}). 
So far, many type of the Newton method have been studied, for example, the {\it regularized Gauss--Newton method} (see e.g.,\cite{Bakushinsky}) and the {\it Quasi--Newton method} (see e.g., \cite{Nocedal}), and for any other, we refer to \cite{Hohage, Kaltenbacher, potthast2001convergence, Xiao}. 
We remark that the regularization parameter $\alpha_i>0$ in (\ref{3.9}) is chosen such that the morozov discrepancy principle:
\begin{equation}
\left\| f - A(\varphi_{i})- A^{\prime}[\varphi_i] (\varphi_{i+1}(\alpha) - \varphi_{i}) \right\| = \rho \left\| f - A(\varphi_{i}) \right\|, \label{morozov discrepancy principle}
\end{equation}
with some fixed $\rho \in (0, 1)$, where $\varphi_{i+1}(\alpha)$ is defined as in (\ref{3.9}) replacing $\alpha_{i}$ by $\alpha$. 
Following lemma is the convergence. 
\begin{lem}[Theorem 4.2 of \cite{Kaltenbacher}, or Theorem 2.2 of \cite{Hanke_1997}] \label{convergence LM}
Let $0<\rho<1$ and assume that (\ref{3.1}) is solvable in $B_{r}(\varphi_{0})$ where $r>0$ is some constant, and let $\varphi^{\dag} \in B_{r}(\varphi_{0})$ be its solution, i.e., $f=A(\varphi^{\dag})$, and assume that $A^{\prime}[\cdot]$ is uniformly bounded in $B_{r}(\varphi_{0})$, and a {\it tangential cone condition}: there exists $C>0$ such that for $\varphi, \widetilde{\varphi} \in B_{2r}(\varphi_{0})$
\begin{equation}
\left\| A(\varphi) - A(\widetilde{\varphi}) - A^{\prime}[\varphi] (\varphi - \widetilde{\varphi}) \right\| \leq C\left\|\varphi - \widetilde{\varphi}\right\| \left\|A(\varphi) - A(\widetilde{\varphi}) \right\|. \label{tangential cone condition}
\end{equation}
Then, if $\left\| \varphi_{0} - \varphi^{\dag} \right\| < \rho / C$, then the Levenberg–Marquardt method $\varphi_{i}$ with $\{ \alpha_{i} \}$ determined from (\ref{morozov discrepancy principle}) converges to a solution of $f=A(\varphi)$ as $i \to \infty$.
\end{lem}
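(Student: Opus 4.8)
The plan is to follow the classical convergence analysis of the Levenberg–Marquardt scheme, organized around three stages: well-definedness of the regularization parameters, monotone decrease of the error with summability of residuals, and a Cauchy argument identifying the limit as a solution. Throughout I would abbreviate $e_i := \varphi_i - \varphi^{\dag}$, $y_i := f - A(\varphi_i)$, $A_i := A^{\prime}[\varphi_i]$, and write the update as $s_i := \varphi_{i+1} - \varphi_i = (\alpha_i I + A_i^{*}A_i)^{-1}A_i^{*}y_i$, so that $\varphi_{i+1}$ is exactly the minimizer of the linearized Tikhonov functional $J_i(s) := \left\| y_i - A_i s \right\|^2 + \alpha_i \left\| s \right\|^2$. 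The induction hypothesis carried through every step is the smallness bound $C\left\| e_i \right\| < \rho$, which holds at $i=0$ by assumption.

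\textbf{Step 1 (the parameter $\alpha_i$ is well defined).} I would first check that the Morozov equation (\ref{morozov discrepancy principle}) is solvable. The discrepancy $\alpha \mapsto \left\| y_i - A_i s_i(\alpha) \right\|$ is continuous and increasing, ranging from $\mathrm{dist}(y_i, \overline{R(A_i)})$ as $\alpha \to 0^{+}$ up to $\left\| y_i \right\|$ as $\alpha \to \infty$. Applying the tangential cone condition (\ref{tangential cone condition}) with $\varphi = \varphi_i$ and $\widetilde{\varphi} = \varphi^{\dag}$, and using $f = A(\varphi^{\dag})$, gives $\left\| y_i + A_i e_i \right\| \le C\left\| e_i \right\| \left\| y_i \right\|$; since $-A_i e_i \in R(A_i)$, this shows $\mathrm{dist}(y_i, \overline{R(A_i)}) \le C\left\| e_i \right\| \left\| y_i \right\| < \rho \left\| y_i \right\|$ under the induction hypothesis, so a value $\alpha_i$ producing discrepancy exactly $\rho\left\| y_i \right\|$ exists.

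\textbf{Step 2 (fundamental inequality and monotonicity).} The key is the Pythagorean identity for the quadratic $J_i$, namely $J_i(s) = J_i(s_i) + \left\| A_i(s - s_i) \right\|^2 + \alpha_i \left\| s - s_i \right\|^2$. Evaluating at $s = -e_i$ and noting $s - s_i = -e_{i+1}$, then substituting the Morozov relation $J_i(s_i) = \rho^2\left\| y_i \right\|^2 + \alpha_i\left\| s_i \right\|^2$ and discarding the nonnegative terms $\alpha_i\left\| s_i \right\|^2$ and $\left\| A_i e_{i+1} \right\|^2$, I obtain after using the tangential cone bound on $\left\| y_i + A_i e_i \right\|$ the estimate
\begin{equation}
\left\| e_{i+1} \right\|^2 \le \left\| e_i \right\|^2 - \frac{\rho^2 - C^2\left\| e_i \right\|^2}{\alpha_i}\left\| y_i \right\|^2 . \nonumber
\end{equation}
Because $C\left\| e_i \right\| < \rho$, the subtracted term is strictly positive, so $\left\| e_{i+1} \right\| < \left\| e_i \right\|$, which both closes the induction (the bound $C\left\| e_i \right\| < \rho$ propagates) and keeps every iterate in $B_{2r}(\varphi_0)$ via $\left\| \varphi_i - \varphi_0 \right\| \le \left\| e_i \right\| + \left\| e_0 \right\| \le 2\left\| e_0 \right\| < 2r$, so the cone condition remains applicable. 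Telescoping then yields $\sum_i \alpha_i^{-1}\left\| y_i \right\|^2 \le (\rho^2 - C^2\left\| e_0 \right\|^2)^{-1}\left\| e_0 \right\|^2 < \infty$.

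\textbf{Step 3 (convergence to a solution) and the main obstacle.} Since $\left\| e_i \right\|$ is monotonically decreasing it converges, and I would show $\{\varphi_i\}$ is Cauchy by writing, for $j > i$, $\left\| \varphi_j - \varphi_i \right\|^2 = \left\| e_j \right\|^2 - \left\| e_i \right\|^2 - 2\,\mathrm{Re}\langle \varphi_j - \varphi_i, e_i\rangle$ and controlling the cross term $\langle \varphi_j - \varphi_i, e_i\rangle = \sum_{l=i}^{j-1}\langle s_l, e_i\rangle$; this is the Landweber-type device of Hanke–Neubauer–Scherzer, where one inserts an intermediate index with smallest residual and bounds the inner products through the residuals and the monotonicity from Step 2. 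This Cauchy estimate is the genuine obstacle: the monotonicity is a short computation, but forcing the cross terms to vanish requires the delicate summability bookkeeping rather than any new idea. Once $\varphi_i \to \varphi_{*}$ is established, continuity of $A$ gives $A(\varphi_i) \to A(\varphi_{*})$, and the limit is a solution: if $\left\| y_i \right\| \to \beta > 0$ the summability forces $\alpha_i \to \infty$, whence the uniform boundedness of $A^{\prime}[\cdot]$ gives $\left\| s_i \right\| \le \left\| A_i \right\|\left\| y_i \right\|/\alpha_i \to 0$, contradicting the Morozov identity $\left\| y_i - A_i s_i \right\| = \rho\left\| y_i \right\|$ with $\rho < 1$; hence $\beta = 0$ and $A(\varphi_{*}) = f$.
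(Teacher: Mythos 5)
Your proposal is correct and takes essentially the same route as the paper's source for this lemma: the paper itself gives no proof but cites Theorem 2.2 of Hanke (1997) and Theorem 4.2 of Kaltenbacher et al., and your Steps 1--2 reproduce their argument exactly (solvability of the Morozov equation via the tangential cone bound $\left\| y_i + A_i e_i \right\| \leq C \left\| e_i \right\| \left\| y_i \right\| < \rho \left\| y_i \right\|$, the Pythagorean identity for the linearized Tikhonov functional yielding the monotonicity estimate, the propagated induction hypothesis keeping iterates in $B_{2r}(\varphi_0)$, and the telescoped summability of $\alpha_i^{-1}\left\| y_i \right\|^2$), while your Step 3 names the correct Hanke-type Cauchy device (intermediate index of minimal residual) and the correct identification of the limit via $\alpha_i \to \infty$ contradicting the discrepancy identity with $\rho < 1$. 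The only caveat is that the cross-term bookkeeping in the Cauchy step is sketched rather than carried out, but the device you invoke is precisely the one in the cited proof, so there is no gap in approach.
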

\section{Levenberg–Marquardt and Kalman filter}\label{Levenberg Marquardt and Kalman filter}
The natural approach for solving the equation (\ref{1.7}) is to put all available measurements $\{ u^{\infty}_{n} \}_{n=1}^{N}$ and all far field mappings $\{ \mathcal{F}_{n} \}_{n=1}^{N}$ where the index $n$ is associated with the incident direction $\theta_n \in \mathbb{S}^{1}$ into one long vector $\vec{u}^{\infty}$ and $\vec{\mathcal{F}}$, respectively, and to employ the Levenberg–Marquardt method (\ref{3.9}) discussed in the Section 3.
In order to study the above general situation, let $f_1,..., f_N \in Y$ be measurements, let $A_1,...,A_N$ be nonlinear observation operators, and let us consider the problem to determine $\varphi \in X$ such that 
\begin{equation}
\vec{A}(\varphi)=\vec{f}, \label{4.1}
\end{equation}
where  $\vec{f}:=\left(
    \begin{array}{cc}
      f_1 \\
      \vdots \\
      f_N
    \end{array}
  \right)$, and $\vec{A}(\varphi):=\left(
    \begin{array}{cc}
      A_1(\varphi) \\
      \vdots \\
      A_N(\varphi)
    \end{array}
  \right)$. 
By applying the Levenberg–Marquardt method (\ref{3.9}) to the above system (\ref{4.1}), we have iterative solution 
\begin{equation}
\varphi^{FLM}_{i+1} := \varphi^{FLM}_{i} + \left(\alpha_{i} I + \vec{A}^{\prime}[\varphi^{FLM}_{i}]^{*}\vec{A}^{\prime}[\varphi^{FLM}_{i}]\right)^{-1}\vec{A}^{\prime}[\varphi^{FLM}_{i}]^{*}\left(\vec{f} - \vec{A}(\varphi^{FLM}_{i}) \right), \label{4.2}
\end{equation}
where $\varphi^{FLM}_{0}:=\varphi_{0}$, and $\vec{A}^{\prime}[\varphi]$ is denoted by $\vec{A}^{\prime}[\varphi]=\left(
    \begin{array}{cc}
      A^{\prime}_{1}[\varphi] \\
      \vdots \\
      A^{\prime}_N[\varphi]
    \end{array}
  \right)$, 
and the regularization parameters $\alpha_i>0$ satisfies the morozov discrepancy principle (\ref{morozov discrepancy principle}).
We call this the {\it Full data Levenberg–Marquardt}. 
Here, $\vec{A}^{\prime}[\varphi_{0}]^{*}$ is a adjoint operator of $\vec{A}^{\prime}[\varphi_{0}]$ with respect to the usual scalar product $\langle \cdot, \cdot \rangle_{X}$ and the weighted scalar product $\langle \cdot, \cdot \rangle_{Y^{N}, R^{-1}}:=\langle \cdot, R^{-1}\cdot \rangle_{Y^{N}}$ where $R: Y \to Y$ is the positive definite symmetric invertible operator, which is interpreted as the covariance matrices of the observation error distribution from a statistical viewpoint in the case when $X$ and $Y$ are Euclidean spaces (see Section 5 of \cite{Furuya}). 
\par
By the same calculation in (3.6) of \cite{Furuya}, we have
\begin{equation}
\vec{A}^{\prime}[\varphi]^{*}=\vec{A}^{\prime}[\varphi]^{H} R^{-1},
\end{equation}
where $\vec{A}^{\prime}[\varphi]^{H}$ is a adjoint operator of $\vec{A}^{\prime}[\varphi]$ with respect to usual scalar products $\langle \cdot, \cdot \rangle_{X}$ and $\langle \cdot, \cdot \rangle_{Y^{N}}$. 
Then, (\ref{4.2}) can be of the form
\begin{equation}
\varphi^{FLM}_{i+1} = \varphi^{FLM}_{i} + \left(\alpha_{i} I + \vec{A}^{\prime}[\varphi^{FLM}_{i}]^{H} R^{-1}\vec{A}^{\prime}[\varphi^{FLM}_{i}]\right)^{-1}\vec{A}^{\prime}[\varphi^{FLM}_{i}]^{H} R^{-1}\left(\vec{f} - \vec{A}(\varphi^{FLM}_{i}) \right), \label{4.4}
\end{equation}
\par
\begin{rem}\label{Remark1}
Let go back to our scattering problem, which $\vec{A}$ corresponds to the far field mapping $\vec{\mathcal{F}}$. 
From Lemma \ref{prop of Frechet}, $\vec{\mathcal{F}}$ is $C^{1}$, and its derivative is locally bounded.
Furthermore, from Lemma \ref{Lipchotz stability}, we have Lipschitz stability on compact convex subset of the finite dimensional subspace, which satisfies a tangential cone condition (\ref{tangential cone condition}).
Therefore by Lemma \ref{convergence LM}, our solution $q^{FLM}_{i}$ converges to true solution in the finite dimensional subspace if the initial geuss $q_{0}$ is very close to true one. 
\end{rem}
However, the algorithm (\ref{4.2}) of the Full  data Levenberg–Marquardt is computationally expensive when the number $N$ of measurements is increasing in which we have to construct the bigger system $\vec{A}\varphi= \vec{f}$.
So, let us consider the alternative approach based on the Kalman filter. The Kalman filter is the linear estimation for the unknown state by the update of the state and its norm using the sequential measurements.
For details of the following derivation, we refer to the first part of our works \cite{Furuya}. 
\par
We consider the following problem for $n=1,...,N$
\begin{equation}
A^{\prime}_{n}[\varphi_0]\varphi = f_{n} - A_{n}(\varphi_0) + A^{\prime}_{n}[\varphi_0]\varphi_0,  \label{4.5}
\end{equation}
which arises from the linearization of the problem $A_{n}(\varphi)=f_n$ at the initial guess $\varphi_0$. 
The above problem (\ref{4.5}) can be applied to the Kalman filter algorithm (see (4.21)--(4.23) in \cite{Furuya}), then we obtain the following algorithm for $n=1,...,N$.
\begin{equation}
\varphi_{0, n}:= \varphi_{0, n-1} + K_{0, n}\left( f_{n} - A_{n}(\varphi_{0,0}) + A^{\prime}_{n}[\varphi_{0,0}]\varphi_{0,0} - A^{\prime}_{n}[\varphi_{0,0}] \varphi_{0, n-1} \right),  \label{4.6}
\end{equation}
\begin{equation}
K_{0, n}:= B_{0, n-1} A^{\prime}_{n}[\varphi_{0,0}]^{H}\left(R + A^{\prime}_{n}[\varphi_{0,0}] B_{0, n-1} A^{\prime}_{n}[\varphi_{0,0}]^{H} \right)^{-1},  \label{4.7}
\end{equation}
\begin{equation}
B_{0, n}:= \left(I - K_{0, n} A^{\prime}_{n}[\varphi_{0,0}]^{H} \right)B_{0, n-1},  \label{4.8}
\end{equation}
where $\varphi_{0, 0}:=\varphi_0$, and $B_{0,0}:=\frac{1}{\alpha_{0}}I$. 
We denote the final state and covariance matrix in (\ref{4.6}) and (\ref{4.8}) by $\varphi_{1, 0}:=\varphi_{0, N}$ and $B_{1,0}:=\frac{1}{\alpha_{1}}I$, which is the initial guess of the next iteration. 
\par
Next, we consider the following problem
\begin{equation}
A^{\prime}_{n}[\varphi_{1, 0}]\varphi = f_{n} - A_{n}(\varphi_{1, 0}) + A^{\prime}_{n}[\varphi_{1, 0}]\varphi_{1, 0}, \label{4.9}
\end{equation}
which arises from the linearization of the problem $A_{n}(\varphi)=f_n$ at $\varphi_{1, 0}$. 
The above problem (\ref{4.9}) can be applied to the Kalman filter algorithm as well, and we obtain the following algorithm for $n=1,...,N$.
\begin{equation}
\varphi_{1, n}:= \varphi_{1, n-1} + K_{1, n}\left( f_{n} - A_{n}(\varphi_{1,0}) + A^{\prime}_{n}[\varphi_{1,0}]\varphi_{1,0} - A^{\prime}_{n}[\varphi_{1,0}] \varphi_{1, n-1} \right),  
\end{equation}
\begin{equation}
K_{0, n}:= B_{0, n-1} A^{\prime}_{n}[\varphi_{1,0}]^{H}\left(R + A^{\prime}_{n}[\varphi_{1,0}] B_{0, n-1} A^{\prime}_{n}[\varphi_{1,0}]^{H} \right)^{-1},  
\end{equation}
\begin{equation}
B_{0, n}:= \left(I - K_{0, n} A^{\prime}_{n}[\varphi_{1,0}]^{H} \right)B_{0, n-1},  
\end{equation}
\par
We can repeat these procedure, then we obtain the following algorithm for $i \in \mathbb{N}_{0}$ and $n=1,...,N$
\begin{equation}
\varphi^{KFL}_{i, n}:= \varphi^{KFL}_{i, n-1} + K_{i, n}\left( f_{n} - A_{n}(\varphi^{KFL}_{i, 0}) + A^{\prime}_{n}[\varphi^{KFL}_{i, 0}]\varphi^{KFL}_{i, 0} - A^{\prime}_{n}[\varphi^{KFL}_{i, 0}] \varphi^{KFL}_{i, n-1} \right),  \label{4.10}
\end{equation}
\begin{equation}
K_{i, n}:= B_{i, n-1} A^{\prime}_{n}[\varphi^{KFL}_{i, 0}]^{H}\left(R + A^{\prime}_{l}[\varphi^{KFL}_{i, 0}] B_{i, n-1} A^{\prime}_{n}[\varphi^{KFL}_{i, 0}]^{H} \right)^{-1},  \label{4.11}
\end{equation}
\begin{equation}
B_{i, n}:= \left(I - K_{i, n} A^{\prime}_{n}[\varphi^{KFL}_{i, 0}]^{H} \right)B_{i, n-1}.  \label{4.12}
\end{equation}
When the iteration time $i$ is raised by one, the final state is renamed as 
\begin{equation}
\varphi^{KFL}_{i, 0}:= \varphi^{KFL}_{i-1, N},  \label{4.13}
\end{equation}
and the weight is initialized as
\begin{equation}
B_{i, 0}:= \frac{1}{\alpha_{i}}I, \label{4.14}
\end{equation}
where the regularization parameters $\alpha_i>0$ satisfies the morozov discrepancy principle (\ref{morozov discrepancy principle}). 
We call this the {\it Kalman filter Levenberg–Marquardt}.
We remark that the algorithm has two indexes $i$ and $n$, where $i$ is associated with the iteration step, and $n$ measurement step, respectively.
\par
Finally in this section, we show the following equivalent theorem, which is the nonlinear iteration version of Theorem 4.3 in \cite{Furuya}.
\begin{thm}\label{equivalence for KFN and FTN}
For measurements $f_1,...,f_N$, nonlinear mappings $A_1,...,A_N$, and the initial guess $\varphi_0 \in X$, and the initial regularization parameter $\alpha_{0}>0$, the Kalman filter Levenberg–Marquardt given by (\ref{4.10})--(\ref{4.14}) is equivalent to the Full data Levenberg–Marquardt given by (\ref{4.4}), that is, we have
\begin{equation}
\varphi^{KFL}_{i, N}=\varphi^{FLM}_{i+1},  \label{4.15}
\end{equation}
for all $i \in \mathbb{N}_0$.
\end{thm}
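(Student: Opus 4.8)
The plan is to argue by induction on the outer iteration index $i$, reducing each nonlinear step to the linear equivalence between the Kalman filter and (weighted) Tikhonov regularization already established in Theorem 4.3 of \cite{Furuya}. The inductive hypothesis I would carry is that the base points of the two schemes coincide, $\varphi^{KFL}_{i,0}=\varphi^{FLM}_{i}$, together with the assertion that the discrepancy principle selects a common regularization parameter $\alpha_i$ in both algorithms. The goal of the inductive step is then to deduce $\varphi^{KFL}_{i,N}=\varphi^{FLM}_{i+1}$, which is exactly the claim (\ref{4.15}).

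The base case is immediate, since $\varphi^{KFL}_{0,0}=\varphi_0=\varphi^{FLM}_0$ by definition and $B_{0,0}=\frac{1}{\alpha_0}I$ uses the prescribed $\alpha_0$. For the inductive step, the key structural observation is that, throughout the measurement sweep $n=1,\dots,N$ at fixed $i$, the Fr\'echet derivative appearing in (\ref{4.10})--(\ref{4.12}) is \emph{frozen} at the single base point $\varphi^{KFL}_{i,0}$ and is never re-evaluated at the intermediate states $\varphi^{KFL}_{i,n}$. This is precisely what distinguishes the Kalman filter Levenberg--Marquardt from the extended Kalman filter of Section \ref{Iterative Extended Kalman filter}. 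Consequently the sweep is a genuine \emph{linear} Kalman filter applied to the fixed linear system (\ref{4.5}), namely $A^{\prime}_{n}[\varphi^{KFL}_{i,0}]\varphi=f_n-A_n(\varphi^{KFL}_{i,0})+A^{\prime}_{n}[\varphi^{KFL}_{i,0}]\varphi^{KFL}_{i,0}$ for $n=1,\dots,N$, started from the mean $\varphi^{KFL}_{i,0}$ with weight $B_{i,0}=\frac{1}{\alpha_i}I$.

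I would then invoke the linear equivalence of \cite{Furuya}, which identifies the final state of this linear Kalman filter sweep with the weighted Tikhonov solution of the batch system $\vec{A}^{\prime}[\varphi^{KFL}_{i,0}]\varphi=\vec{g}$, where $g_n=f_n-A_n(\varphi^{KFL}_{i,0})+A^{\prime}_{n}[\varphi^{KFL}_{i,0}]\varphi^{KFL}_{i,0}$. Writing out that solution and noting that the affine term telescopes, so that $\vec{g}-\vec{A}^{\prime}[\varphi^{KFL}_{i,0}]\varphi^{KFL}_{i,0}$ has components $f_n-A_n(\varphi^{KFL}_{i,0})$ and hence equals $\vec{f}-\vec{A}(\varphi^{KFL}_{i,0})$, yields
\[
\varphi^{KFL}_{i,N}=\varphi^{KFL}_{i,0}+\left(\alpha_i I+\vec{A}^{\prime}[\varphi^{KFL}_{i,0}]^{H}R^{-1}\vec{A}^{\prime}[\varphi^{KFL}_{i,0}]\right)^{-1}\vec{A}^{\prime}[\varphi^{KFL}_{i,0}]^{H}R^{-1}\left(\vec{f}-\vec{A}(\varphi^{KFL}_{i,0})\right).
\]
By the inductive hypothesis $\varphi^{KFL}_{i,0}=\varphi^{FLM}_{i}$, the right-hand side is exactly the Full data Levenberg--Marquardt update (\ref{4.4}), so $\varphi^{KFL}_{i,N}=\varphi^{FLM}_{i+1}$. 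Finally, the renaming (\ref{4.13}) gives $\varphi^{KFL}_{i+1,0}=\varphi^{KFL}_{i,N}=\varphi^{FLM}_{i+1}$, reinstating the hypothesis at level $i+1$ and closing the induction for all $i\in\mathbb{N}_0$.

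The genuinely delicate point, rather than the linear-algebra identity itself, is the correct bookkeeping of the inhomogeneous (affine) term that converts the nonlinear residual into the data of a linear problem: one must verify that the extra summand $A^{\prime}_{n}[\varphi^{KFL}_{i,0}]\varphi^{KFL}_{i,0}$ inserted in (\ref{4.5}) and (\ref{4.10}) is precisely what the cited linear equivalence requires in order to output the increment measured against the base point $\varphi^{KFL}_{i,0}$ rather than against the origin. A secondary obstacle is justifying that the same $\alpha_i$ is chosen by both schemes; since the discrepancy principle (\ref{morozov discrepancy principle}) depends only on the current base point and its frozen linearization — and, for every trial $\alpha$, the candidate update $\varphi_{i+1}(\alpha)$ is the same function of $\alpha$ in both schemes by the linear equivalence — the same Morozov root is selected, so the induction on the states and on the regularization parameters can be carried out simultaneously.
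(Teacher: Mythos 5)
Your proof is correct and follows essentially the same route as the paper: an induction on the outer index $i$ in which each measurement sweep, with the derivative frozen at the base point $\varphi^{KFL}_{i,0}$, is identified via Theorem 4.3 of \cite{Furuya} with the weighted Tikhonov solution of the batch linearized system, giving $\varphi^{KFL}_{i,N}=\varphi^{FLM}_{i+1}$. Your added bookkeeping of the affine term $A^{\prime}_{n}[\varphi^{KFL}_{i,0}]\varphi^{KFL}_{i,0}$ and of the common Morozov parameter $\alpha_i$ only makes explicit what the paper's shorter proof leaves implicit.
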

\begin{proof}
We will prove (\ref{4.15}) by the induction.
By applying Theorem 4.3 of \cite{Furuya} to the linearized problem $A^{\prime}_{n}[\varphi_0]\varphi = f_{n} - A_{n}(\varphi_0) + A^{\prime}_{n}[\varphi_0]\varphi_0$ for $n=1,...,N$ with the initial guess $\varphi_0$ and the regularization parameter $\alpha_0>0$, we have $\varphi^{KFL}_{0, N}=\varphi^{FLM}_{1}$, which is the case of $i=0$.
\par
Let us assume that (\ref{4.15}) in the case of $i-1$ holds, that is, we have $\varphi^{KFL}_{i-1, N}(=\varphi^{KFL}_{i, 0})=\varphi^{FLM}_{i}=:\varphi_{i}$.
Again, we apply Theorem 4.3 of \cite{Furuya} to the linearized problem $A^{\prime}_{n}[\varphi_i]\varphi = f_{n} - A_{n}(\varphi_i) + A^{\prime}_{n}[\varphi_i]\varphi_i$ for $n=1,...,N$ with the initial guess $\varphi_i=\varphi^{KFL}_{i, 0}=\varphi^{FLM}_{i}$ and the regularization parameter $\alpha_i>0$, then we have $\varphi^{KFL}_{i, N}=\varphi^{FLM}_{i+1}$.
Theorem \ref{equivalence for KFN and FTN} has been shown.
\end{proof}
\section{Iterative Extended Kalman filter}\label{Iterative Extended Kalman filter}
The usual Kalman filter is the linear optimal estimation for solving the linear system.
However in realistic applications, most systems are nonlinear, so many studies of the nonlinear estimation have been done.
The {\it Extended Kalman filter}, which is one of the nonlinear version of the Kalman filter, is to apply the linear Kalman filter to the linearized equation at the current state for every time to observe one measurement.
For further readings of the Extended Kalman filter, we refer to \cite{Grewal, grewal2010applications, Jazwinski}, and there also exists other types of the nonlinear Kalman filter such as the Unscented Kalman Filter (\cite{Julier}) which based on the Monte Carlo sampling without employing the linearization approximation. 
In this section, we introduce the algorithm based on the Extended Kalman filter.
\par
First, let us start with the linearized problem of $A_{1}(\varphi)=f_{1}$ at the initial guess $\varphi_0$.
\begin{equation}
A^{\prime}_{1}[\varphi_0]\varphi = f_{1} - A(\varphi_0) + A^{\prime}_{1}[\varphi_0]\varphi_0. \label{5.1}
\end{equation}
By the same argument in Section 4 of \cite{Furuya} replacing $A_1$ and $f_1$ by $A^{\prime}_{1}[\varphi_0]$ and $f_{1} - A(\varphi_0) + A^{\prime}_{1}[\varphi_0]\varphi_0$, respectively, we have the following solution of (\ref{5.1}).
\begin{equation}
\varphi_{1}:= \varphi_{0} + K_{1}\left( f_{1} - A_{1}(\varphi_{0}) \right), \label{5.2}
\end{equation}
\begin{equation}
K_{1}:= B_{0} A^{\prime}_{1}[\varphi_0]^{H}\left(R + A^{\prime}_{1}[\varphi_0] B_{0} A^{\prime}_{1}[\varphi_0]^{H} \right)^{-1}, \label{5.3}
\end{equation}
\begin{equation}
B_{1}:= \left(I - K_{1} A^{\prime}_{1}[\varphi_0]^{H} \right)B_{0}, \label{5.4}
\end{equation}
where $B_{0}:=\frac{1}{\alpha_0}I$ and $\alpha_0>0$ is an initial regularization parameter. 
\par
Next, we consider linearized problem of $A_{2}(\varphi)=f_{2}$ at $\varphi_{1}$.
\begin{equation}
A^{\prime}_{2}[\varphi_{1}]\varphi = f_{2} - A_{2}(\varphi_{1}) + A^{\prime}_{2}[\varphi_{1}]\varphi_{1}, \label{5.5}
\end{equation}
Then, by the same argument in Section 4 of \cite{Furuya}, we have the following solution of (\ref{5.5}).
\begin{equation}
\varphi_{2}:= \varphi_{1} + K_{2}\left( f_{2} - A_{2}(\varphi_{1}) \right), 
\end{equation}
\begin{equation}
K_{2}:= B_{1} A^{\prime}_{2}[\varphi_1]^{H}\left(R + A^{\prime}_{2}[\varphi_1] B_{2} A^{\prime}_{2}[\varphi_1]^{H} \right)^{-1}, 
\end{equation}
\begin{equation}
B_{2}:= \left(I - K_{2} A^{\prime}_{2}[\varphi_1]^{H} \right)B_{1},
\end{equation}
\par
We can repeat them, then we have the following algorithm.
\begin{equation}
\varphi_{n}:= \varphi_{n-1} + K_{n}\left( f_{n} - A_{n}(\varphi_{n-1}) \right), \label{5.6}
\end{equation}
\begin{equation}
K_{n}:= B_{n-1} A^{\prime}_{n}[\varphi_{n-1}]^{H}\left(R + A^{\prime}_{n}[\varphi_{n-1}] B_{n-1} A^{\prime}_{n}[\varphi_{n-1}]^{H} \right)^{-1}, \label{5.7}
\end{equation}
\begin{equation}
B_{n}:= \left(I - K_{n} A^{\prime}_{n}[\varphi_{n-1}]^{H} \right)B_{n-1}, \label{5.8}
\end{equation}
for $n=1,...,N$. 
In order to obtain the iterative algorithm, we repeat the arguments in the above (\ref{5.1})--(\ref{5.8}) as the initial guess is $\varphi_{N}$.
Finally, we obtain the following iterative algorithm for $i \in \mathbb{N}_{0}$ and $n=1,...,N$.
\begin{equation}
\varphi^{EKF}_{i,n}:= \varphi^{EKF}_{i, n-1} + K_{i, n}\left( f_{n} - A_{n}(\varphi^{EKF}_{i, n-1}) \right), \label{5.9}
\end{equation}
\begin{equation}
K_{i, n}:= B_{i, n-1} A^{\prime}_{n}[\varphi^{EKF}_{i, n-1}]^{H}\left(R + A^{\prime}_{l}[\varphi^{EKF}_{i, n-1}] B_{i, n-1} A^{\prime}_{n}[\varphi^{EKF}_{i, n-1}]^{H} \right)^{-1}, \label{5.10}
\end{equation}
\begin{equation}
B_{i, n}:= \left(I - K_{i, n} A^{\prime}_{n}[\varphi^{EKF}_{i, n-1}]^{H} \right)B_{i, n-1}. \label{5.11}
\end{equation}
When the iteration time $i$ is raised by one, the final state is renamed as 
\begin{equation}
\varphi^{EKF}_{i, 0}:= \varphi^{EKF}_{i-1, N},  \label{5.12}
\end{equation}
and the weight as
\begin{equation}
B_{i, 0}:= B_{i-1,N}.  \label{5.13}
\end{equation}
We call this the {\it iteratively Extended Kalman Filter}.  Figure \ref{KFN and EKF} provides an illustration for the difference of Kalman filter Levenberg–Marquardt (KFL, left) and iterative Extended Kalman filter (EKF, right). 
When the state moves horizontally, measurements are used, and when it moves vertically, linearization are done.
There are differences in term of when to linearize the nonlinear equation, and the number of linearization in EKF is larger than that in KFL.
\par
\begin{rem}\label{Remark2}
By Theorem \ref{equivalence for KFN and FTN}, Kalman filter Levenberg–Marquardt (\ref{4.10})--(\ref{4.14}) is equivalent to Full data Levenberg–Marquardt (\ref{4.4}), which implies that in our scattering problem, Kalman filter Levenberg–Marquardt $q^{KFL}_{n,N}$ converges to the finite dimensional true solution (see Remark \ref{Remark1}). 
Although we will not prove the rigorous convergence, iteratively Extended Kalman filter (\ref{5.9})-(\ref{5.13}) could be expected to have the convergence because there exists several references \cite{guo2002fast, boutayeb1995convergence, krener2002convergence}, which discuss the convergence of Extended Kalman filter in the context of dynamic Kalman filter in the setting of the Euclidean space.
They could be extended to our scattering setting, in particular to infinite dimensional Hilbert spaces over complex variable.
\end{rem}
\begin{figure}[h]
\hspace{0.0cm}
  \includegraphics[keepaspectratio, scale=0.7]
  {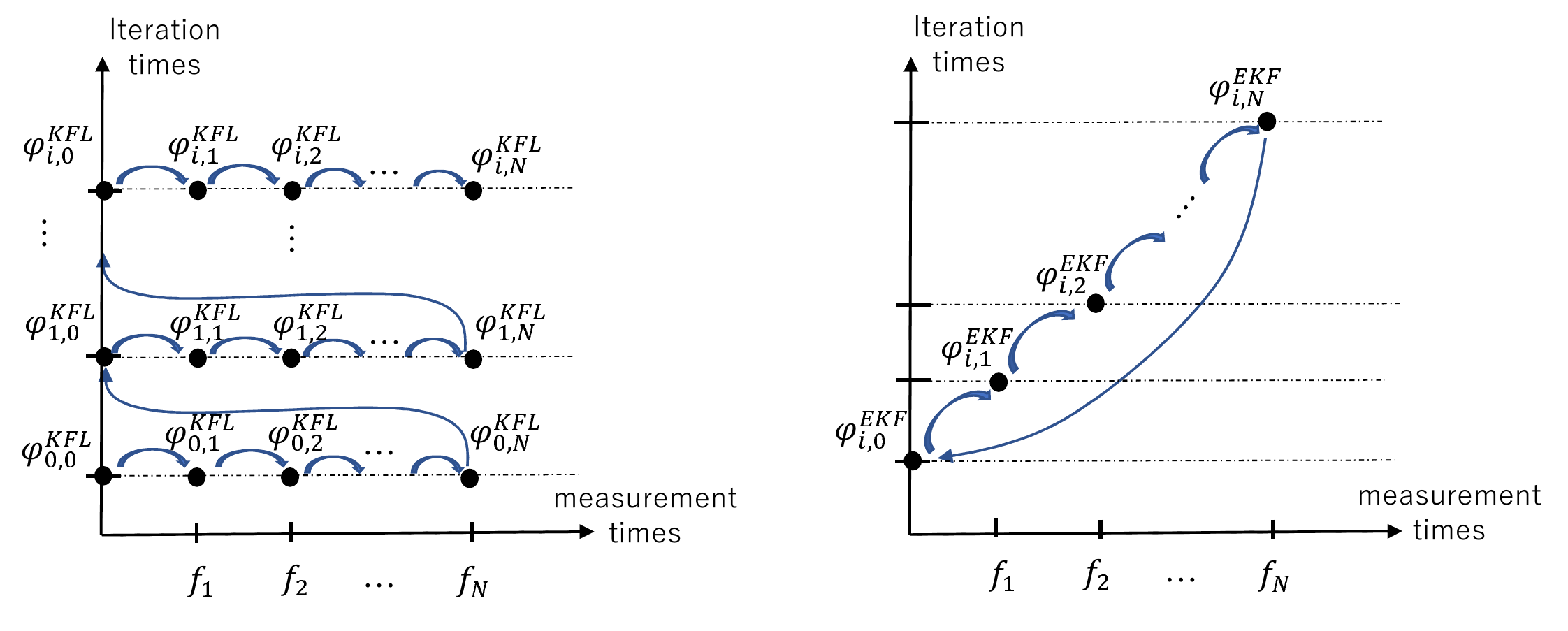}
 \caption{difference of KFL (left) and EKF (right)}
\label{KFN and EKF}
\end{figure}
\section{Numerical examples}\label{Numerical examples}
In this section, we provide numerical examples for the Kalman filter algorithm.
Our inverse scattering problem is to solve the nonlinear integral equation
\begin{equation}
\mathcal{F}_{n}q=u^{\infty}(\cdot, \theta_n), \label{Nonlinear integral equation}
\end{equation}
for $n=1,...,N$ where the operator $\mathcal{F}_{n}:L^{2}(Q) \to L^{\infty}(\mathbb{S}^{1})$ is defined by 
\begin{equation}
\mathcal{F}_{n}q(\hat{x}):=\mathcal{F}q(\hat{x}, \theta_{n}):=\frac{k^2}{4\pi}\int_{Q}\mathrm{e}^{-ik\hat{x} \cdot y}u_{q}(y, \theta_{n}) q(y)dy, 
\end{equation}
where the incident direction is given by $\theta_n:=\left(\mathrm{cos}(2\pi n/N), \mathrm{sin}(2\pi n/N) \right)$ for each $n=1,...,N$. 
Here, $u_{q}(\cdot, \theta_{n})$ is the solution of Lippmann-Schwinger integral equation (\ref{1.4}), which is numerically computed based on Vainikko's method \cite{saranen2001periodic, vainikko2000fast}, which is a fast solution method for the Lippmann–Schwinger equation based on periodization, fast Fourier
transform techniques and multi-grid methods.
We assume that the support of the function $q$ is included in $[-S, S]^2$ with some $S>0$. The linearized problem at $q$ of (\ref{Nonlinear integral equation}) is 
\begin{equation}
\mathcal{F}^{\prime}_{n}[q_{0}](q-q_{0})= u^{\infty}_{n} -  \mathcal{F}_{n}q_{0} + \mathcal{F}^{\prime}_{n}[q_{0}]q_{0}, \label{Lineared Nonlinear}
\end{equation}
where  Fr\'echet derivative $\mathcal{F}^{\prime}_{l}[q_{0}]$ is of the form
\begin{equation}
\mathcal{F}^{\prime}_{n}[q_{0}]q(\hat{x}) = \frac{k^2}{4\pi}\int_{Q} K_{q_{0}}(\hat{x}, y) u_{q_{0}}(y, \theta_{n})q(y)dy, \  \hat{x} \in \mathbb{S}^1. 
\end{equation}
where $K_{q_{0}}(\hat{x}, y)$ is defined by (\ref{2.8}).
\par
The linearized equation (\ref{Lineared Nonlinear}) is discretized by
\begin{equation}
\bm{\mathcal{F}^{\prime}_{n}[q_{0}]} \bm{q} = \bm{u^{\infty}_{n}} -  \bm{\mathcal{F}_{n}q_{0}} + \bm{\mathcal{F}^{\prime}_{n}[q_{0}]q_{0}},
\end{equation}
where
\begin{equation}
\bm{\mathcal{F}^{\prime}_{n}[q_{0}]} = \frac{k^2 S^{2}}{4\pi M^{2}} \left(
K_{q_{0}}(\hat{x}_j, y_{m_{1},m_{2}})u_{q_{0}}(y_{m_{1},m_{2}}, \theta_{n}) \right)_{j=1,...,J, \ -M \leq m_{1},m_{2} \leq M-1} \in \mathbb{C}^{J\times (2M)^2}. 
\end{equation} 
where $y_{m_{1},m_{2}}:=\left( \frac{(2m_{1}+1)S}{2M}, \frac{(2m_{2}+1)S}{2M} \right)$, and $M \in \mathbb{N}$ is a number of the division of $[0,S]$ (i.e., the function $q$ is discretized by piecewise constant on $[-S, S]^{2}$ which is decomposed by squares with the length $\frac{S}{M}$), and $\hat{x}_j:=\left(\mathrm{cos}(2\pi j/J),  \mathrm{sin}(2\pi j/J) \right)$, and $J \in \mathbb{N}$ is a number of the division of $[0,2\pi]$ and 
\begin{equation}
\bm{q} = \left( q(y_{m_{1},m_{2}})  \right)_{-M \leq m_{1},m_{2} \leq M-1} \in \mathbb{C}^{(2M)^2},
\end{equation} 
and
\begin{equation}
\bm{u_{n}^{\infty}} = \left( u^{\infty}(\hat{x}_j, \theta_{n}) \right)_{j=1,...,J}+\bm{\epsilon_{n}}  \in \mathbb{C}^{J}.
\end{equation}
The noise $\bm{\epsilon_{n}} \in \mathbb{C}^{J}$ is sampling from a complex Gaussian distribution $\mathcal{C}\mathcal{N}(0, \sigma^{2}\bm{I})$, which is equivalent to $\bm{\epsilon_{n}}=\bm{\epsilon^{re}_{n}}+i\bm{\epsilon^{im}_{n}}$ where $\bm{\epsilon^{re}_{n}}, \bm{\epsilon^{im}_{n}} \in \mathbb{R}^{J}$ are independently identically distributed from $\mathcal{N}(0, \sigma^{2} \bm{I})$.
\par
Here, we always fix discretization parameters as $J=30$, $M=8$, $S=3$, $N=30$, and weight $\bm{R} \in \mathbb{R}^{J \times J}$, which is the covariance matrix of the observation error distribution, as $R=r^{2}I$, and $r=3$. 
\par 
We consider true functions as the characteristic function 
\begin{equation}
q^{true}_{j}(x):=\left\{ \begin{array}{ll}
0.1 & \quad \mbox{for $x \in B_j$}  \\
0 & \quad \mbox{for $x \notin B_j$}
\end{array} \right.,  \label{6.6}
\end{equation} 
where the support $B_j$ of the true function is considered as the following two types.
\begin{equation} 
B_1:=\left\{(x_1, x_2) : x^{2}_{1}+x^{2}_{2} <1.5   \right\},  \label{6.7}
\end{equation}
\begin{equation}
B_2:=\left\{(x_1, x_2):\begin{array}{cc}
      (x_{1}+1.5)^2+(x_{2}+1.5)^{2} < (1.0)^{2}\ or \\
      1 < x_1 < 2,\ -2 < x_2 < 2\ or \\
      -2 < x_1 < 2,\ -2.0 < x_2 < -1.0 
    \end{array}
\right\},  \label{6.8}
\end{equation}
In Figure \ref{true}, the blue closed curve is the boundary $\partial B_j$ of the support $B_j$, and the green brightness indicates the value of the true function on each cell divided into $(2M)^2=256$ in the sampling domain $[-S, S]^2=[-3, 3]^2$.
Here, we always employ the initial guess $q_0$ as 
\begin{equation}
q_0\equiv0.  \label{6.9}
\end{equation}
\par
Figures \ref{KFNn001} and \ref{KFNn01} show the reconstruction by the  Kalman filter Levenberg–Marquardt (KFL) discussed in (\ref{4.10})--(\ref{4.14}) with noisy $\sigma=0.01$ and $0.1$, respectively, while Figures \ref{EKFn001} and \ref{EKFn01} show the reconstruction by and the iterative Extended Kalman filter (EKF) discussed in (\ref{5.9})--(\ref{5.13}) with noisy $\sigma=0.01$ and $0.1$, respectively.
The first and second columns in Figures \ref{KFNn001} and \ref{KFNn01} correspond to visualization for discrepancy constant $\rho=0.4$, and $0.8$, respectively, while those in Figures \ref{EKFn001} and \ref{EKFn01} correspond to visualization for initial regularization parameter $\alpha_{0}=50$, and $5$, respectively, for different two shapes $B_1$ and $B_2$, and for different two wave numbers $k=3$ and $k=7$.
The third column corresponds to the graph of the Mean Square Error (MSE) defined by
\begin{equation}
e_{i}:=\left\|q^{true}-q_{i} \right\|^2,  \label{6.11}
\end{equation}
where $q_{i}$ is associated with the state of $i$th iteration step. 
The horizontal axis is with respect to number of iterations, and the vertical axis is the value of MSE. 
We observe that the error of Kalman filter Levenberg–Marquardt blows up in some case (wave number $k=3$ and noise $\sigma=0.1$), in which iterative Extended Kalman filter does not. 
We also obverse that the error of iterative Extended Kalman filter decreases more rapidly than that of Kalman filter Levenberg–Marquardt.
Therefore, the result of iterative Extended Kalman filter is better than that of Kalman filter Levenberg–Marquardt in our experiments.
It would be interesting to provide the ratio of convergence for two methods to justify these numerical experiments.
\section*{Acknowledgments}
This work of the first author was supported by Grant-in-Aid for JSPS Fellows (No.21J00119), Japan Society for the Promotion of Science.

\bibliographystyle{plain}
\bibliography{KF.bib}

\begin{figure}[h]
  \begin{minipage}[b]{0.5\linewidth}
  \centering
  \includegraphics[keepaspectratio, scale=0.5]
  {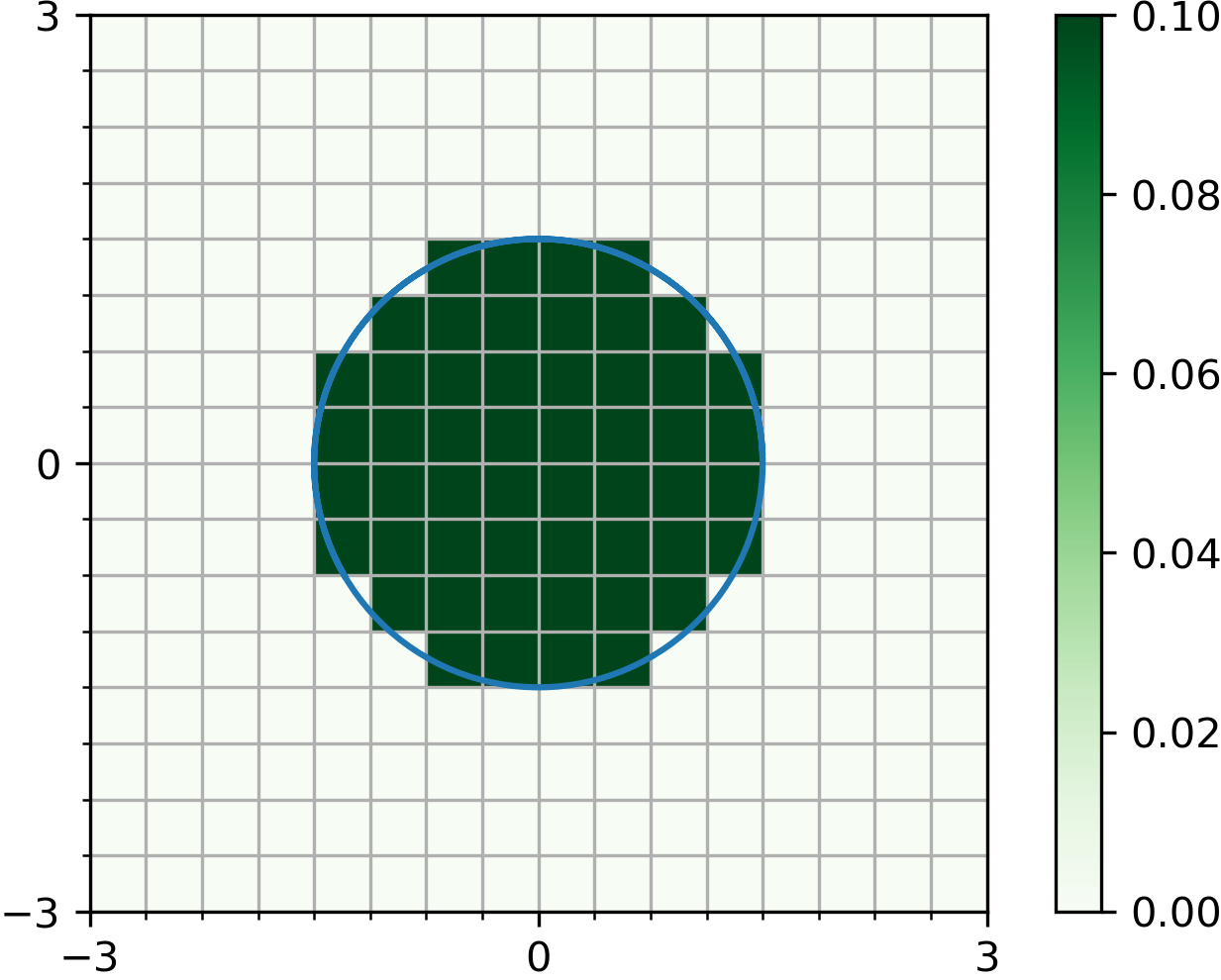}
  \subcaption{$q^{true}_{1}$}
 \end{minipage}
 \begin{minipage}[b]{0.5\linewidth}
  \centering
  \includegraphics[keepaspectratio, scale=0.5]
  {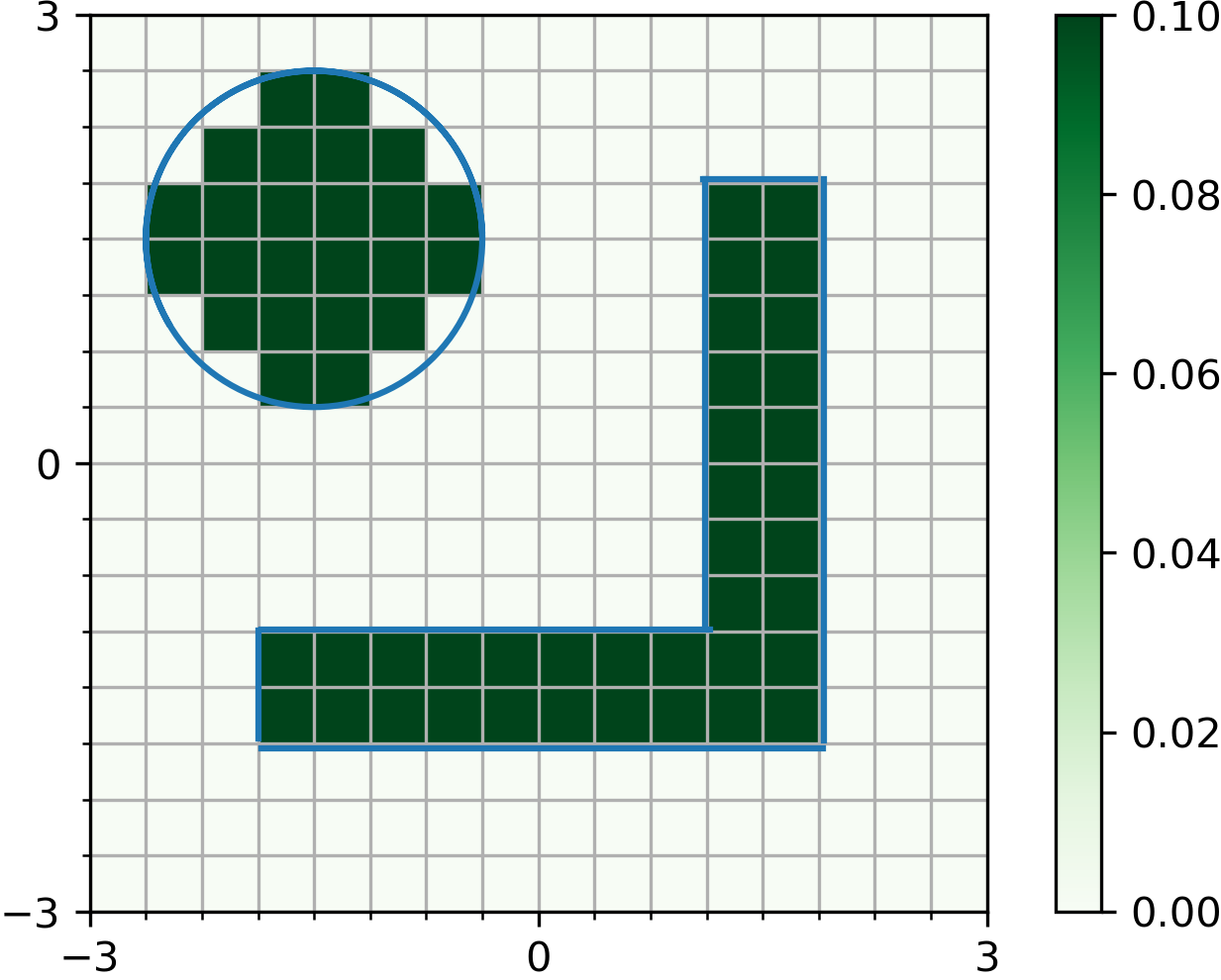}
  \subcaption{$q^{true}_{2}$}
 \end{minipage}
 \caption{true functions}
 \label{true}
\end{figure}

\begin{figure}[h]
\begin{tabular}{c}
\hspace{-2.5cm}
 \begin{minipage}[b]{0.4\linewidth}
  \centering
  \includegraphics[keepaspectratio, scale=0.45]
  {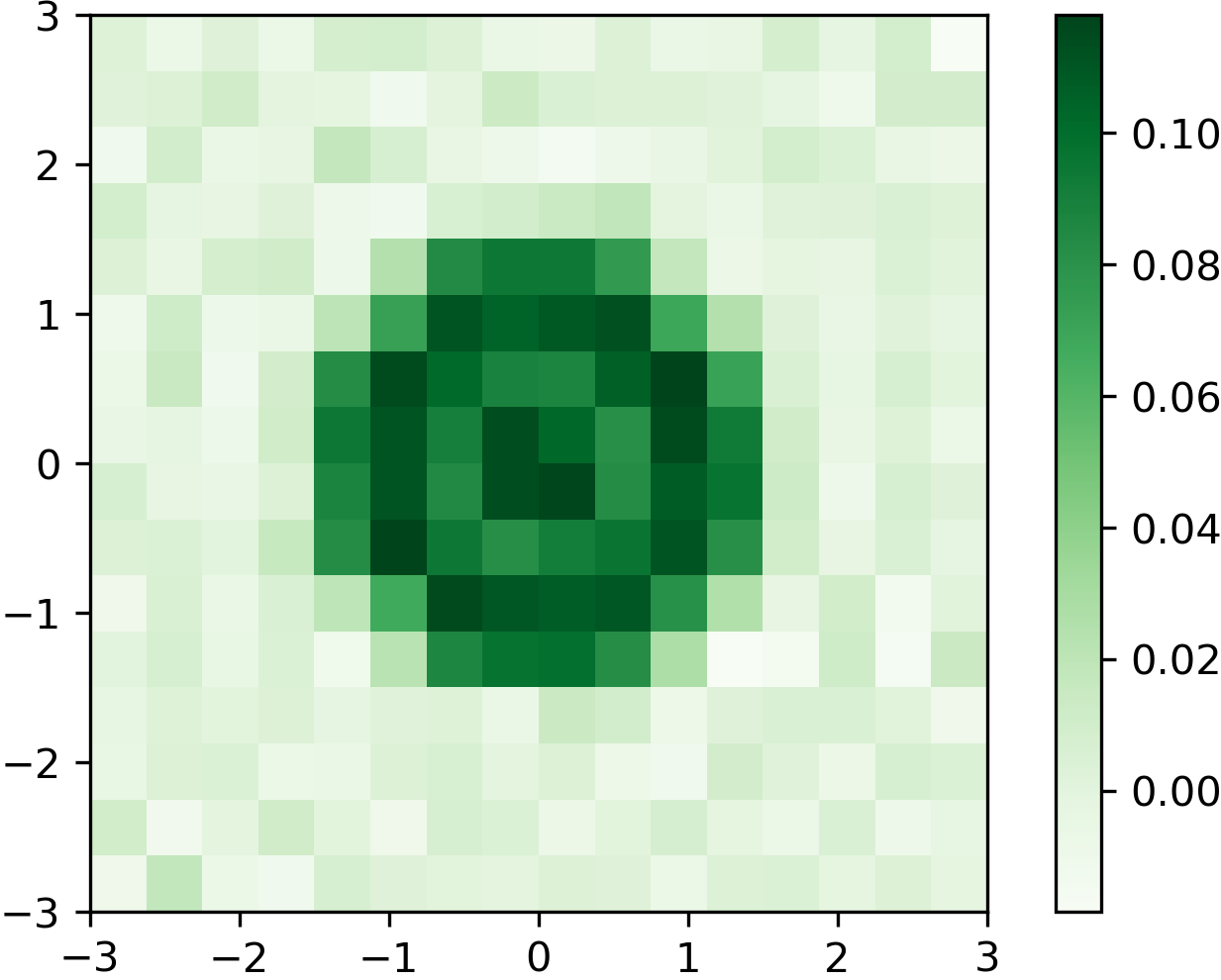}
  \subcaption{$B_1$, $k=3$, $\rho=0.4$}
 \end{minipage}
 \begin{minipage}[b]{0.4\linewidth}
  \centering
  \includegraphics[keepaspectratio, scale=0.45]
  {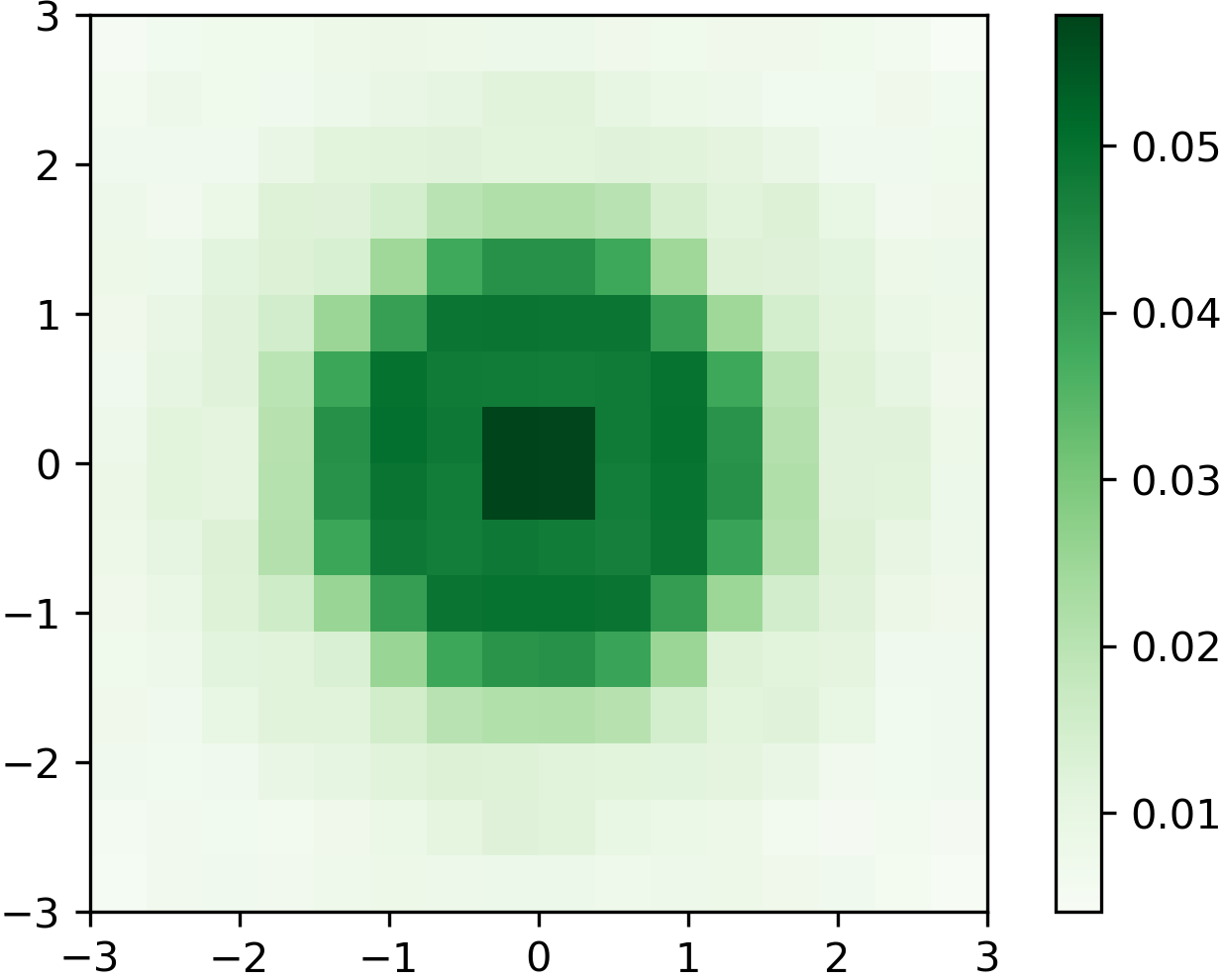}
  \subcaption{$B_1$, $k=3$, $\rho=0.8$}
 \end{minipage}
 \begin{minipage}[b]{0.4\linewidth}
  \centering
  \includegraphics[keepaspectratio, scale=0.45]
  {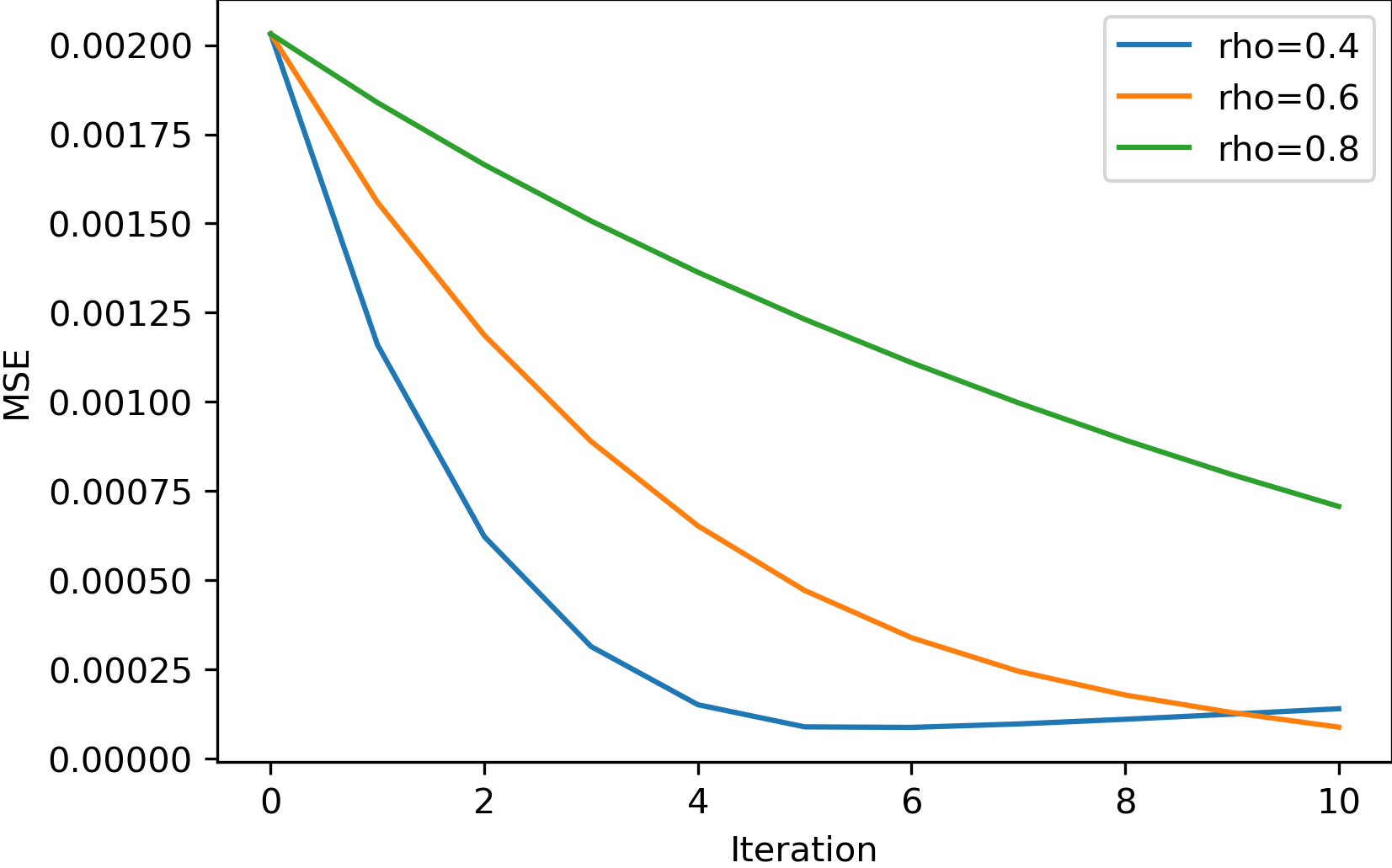}
  \subcaption{$B_1$, $k=3$, error graph}
 \end{minipage}
\end{tabular}

\begin{tabular}{c}
\hspace{-2.5cm}
 \begin{minipage}[b]{0.4\linewidth}
  \centering
  \includegraphics[keepaspectratio, scale=0.45]
  {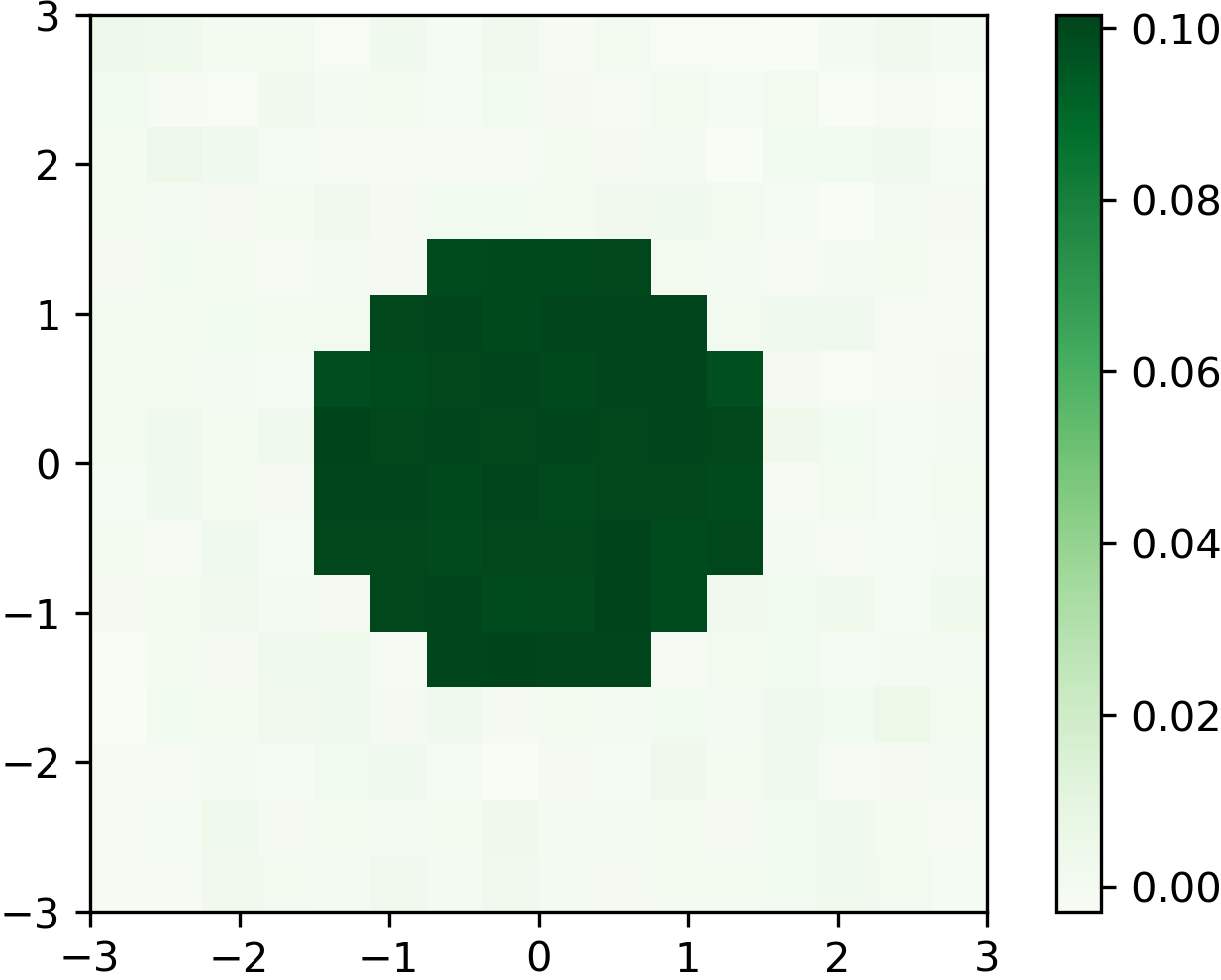}
  \subcaption{$B_1$, $k=7$, $\rho=0.4$}
 \end{minipage}
 \begin{minipage}[b]{0.4\linewidth}
  \centering
  \includegraphics[keepaspectratio, scale=0.45]
  {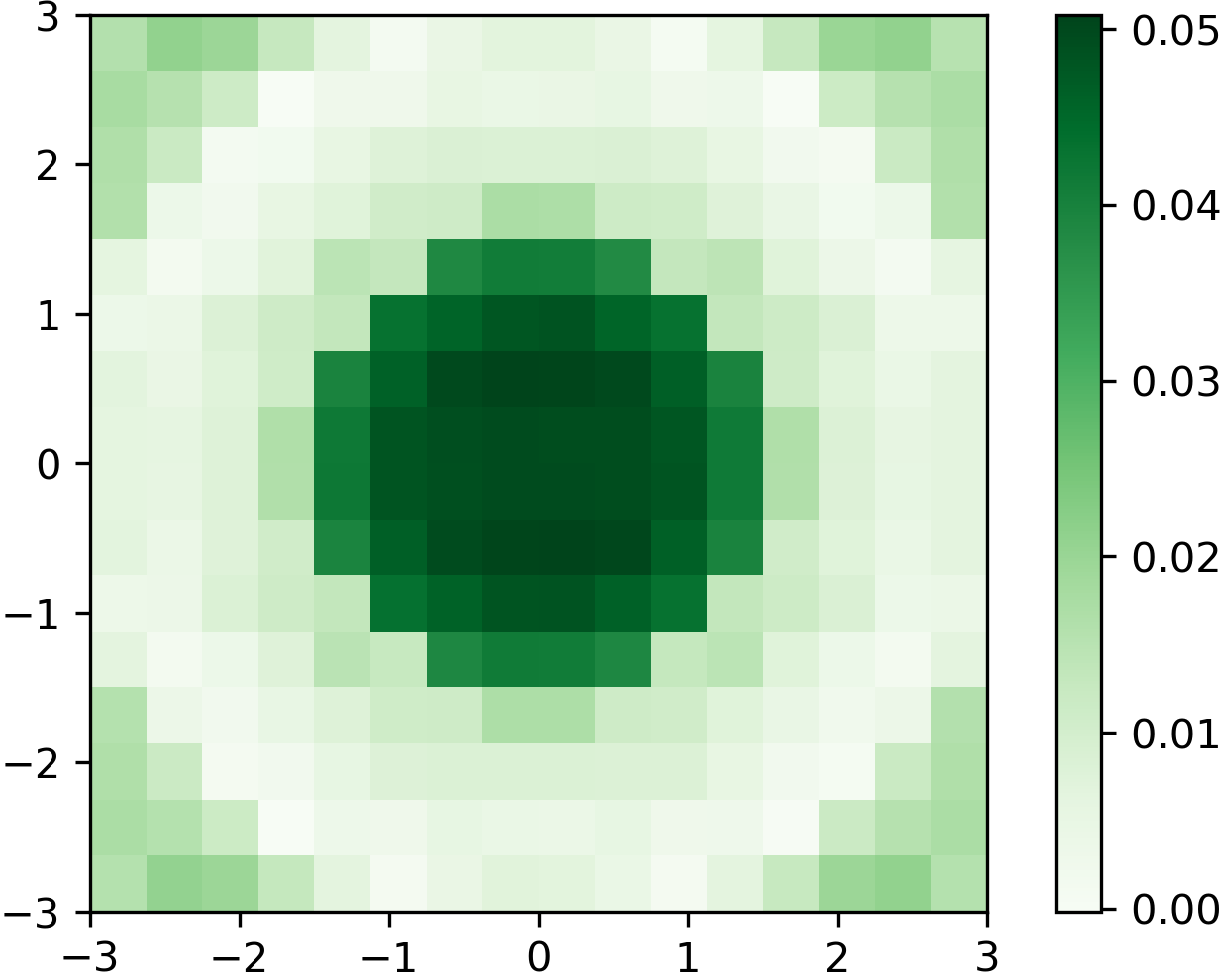}
  \subcaption{$B_1$, $k=7$, $\rho=0.8$}
 \end{minipage}
 \begin{minipage}[b]{0.4\linewidth}
  \centering
  \includegraphics[keepaspectratio, scale=0.45]
  {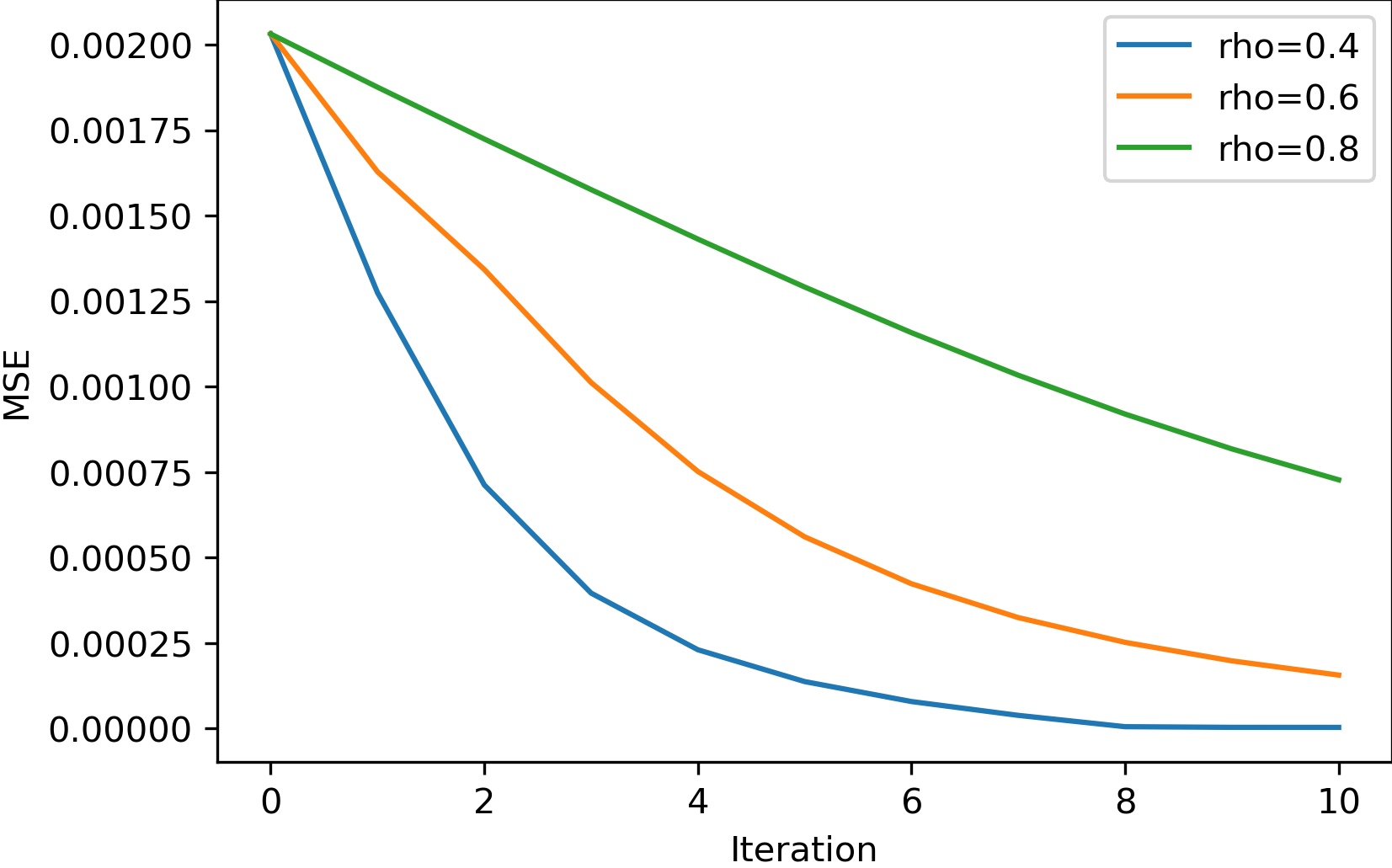}
  \subcaption{$B_1$, $k=7$, error graph}
 \end{minipage}
\end{tabular}

\begin{tabular}{c}
\hspace{-2.5cm}
 \begin{minipage}[b]{0.4\linewidth}
  \centering
  \includegraphics[keepaspectratio, scale=0.45]
  {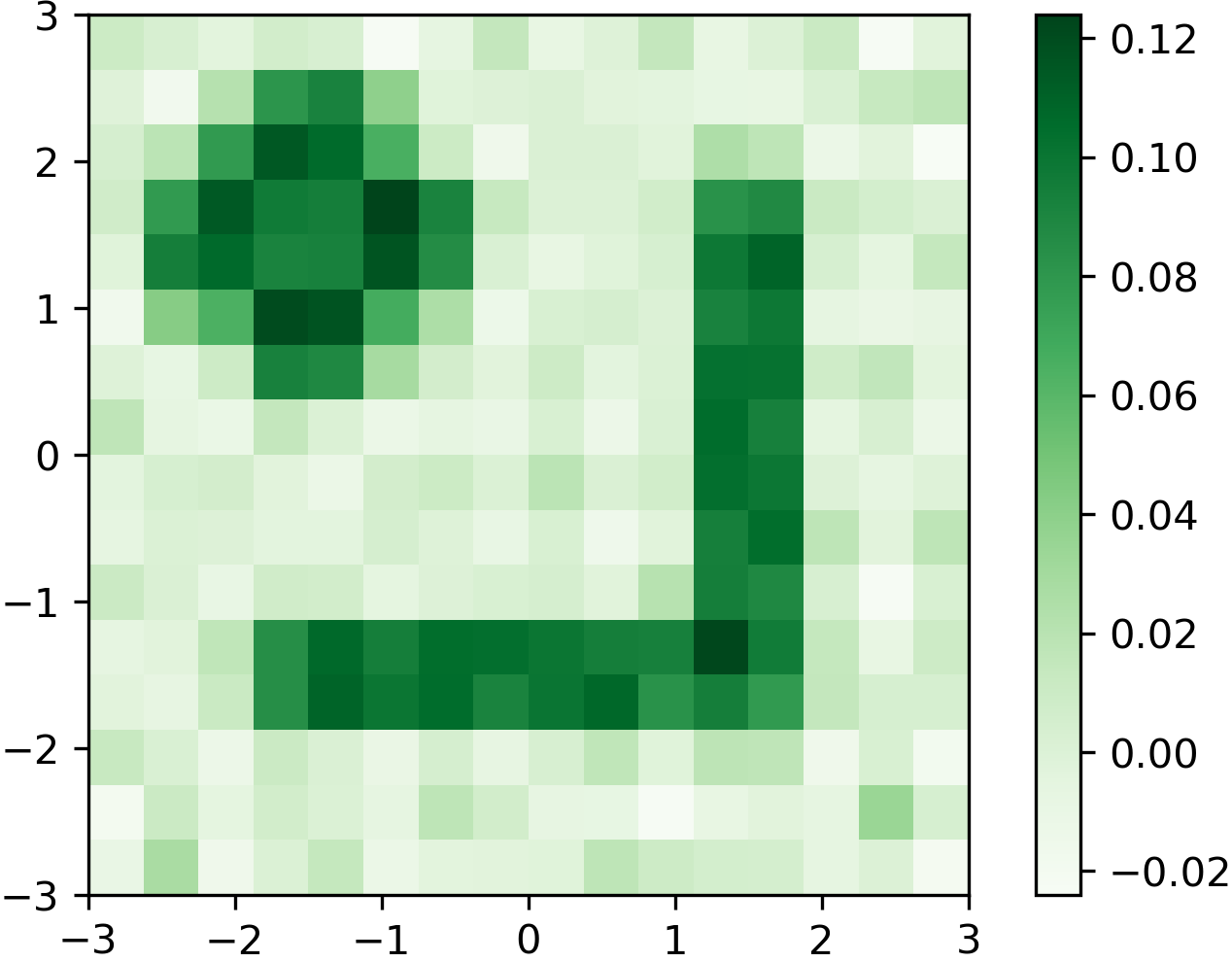}
  \subcaption{$B_2$, $k=3$, $\rho=0.4$}
 \end{minipage}
 \begin{minipage}[b]{0.4\linewidth}
  \centering
  \includegraphics[keepaspectratio, scale=0.45]
  {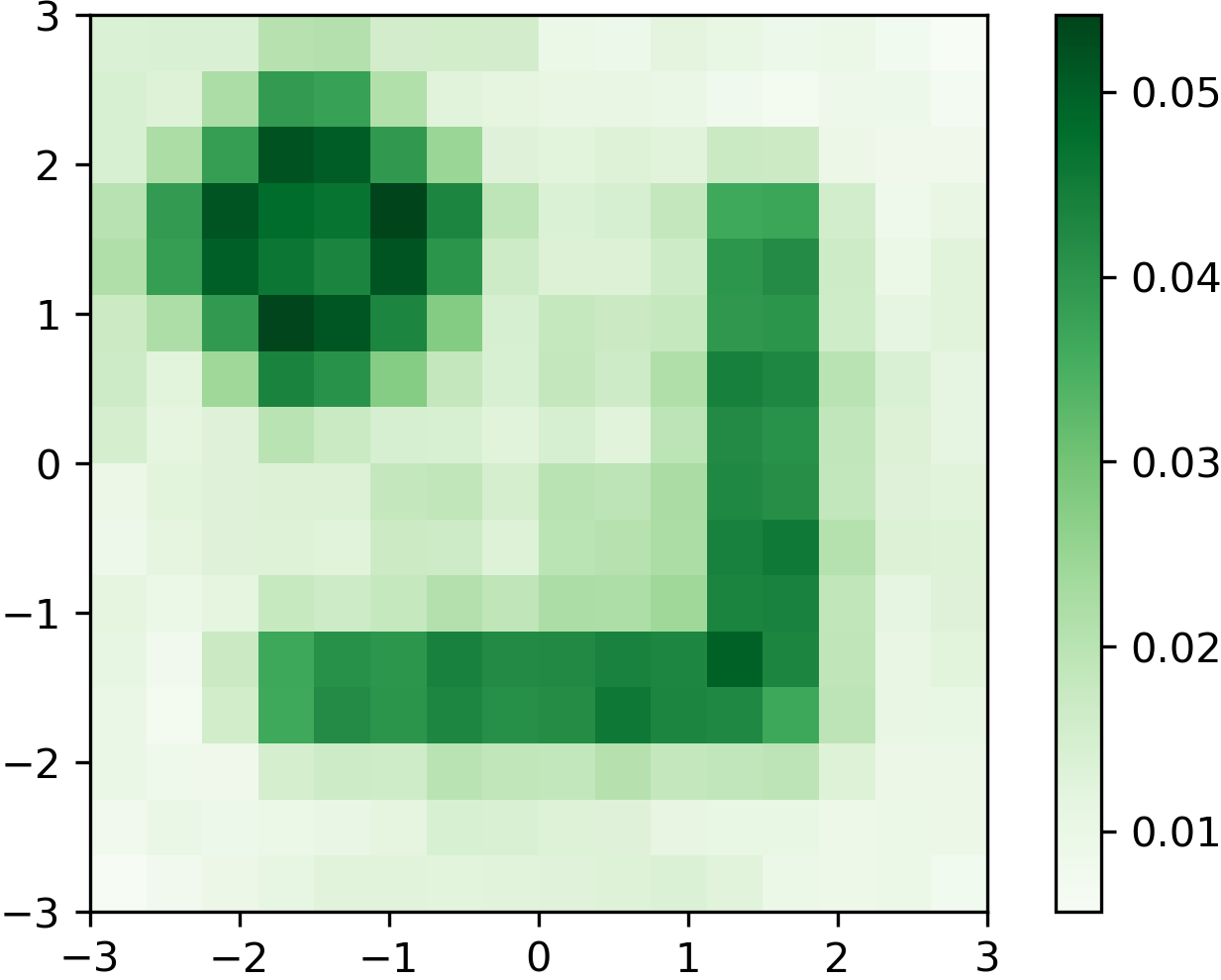}
  \subcaption{$B_2$, $k=3$, $\rho=0.8$}
 \end{minipage}
 \begin{minipage}[b]{0.4\linewidth}
  \centering
  \includegraphics[keepaspectratio, scale=0.45]
  {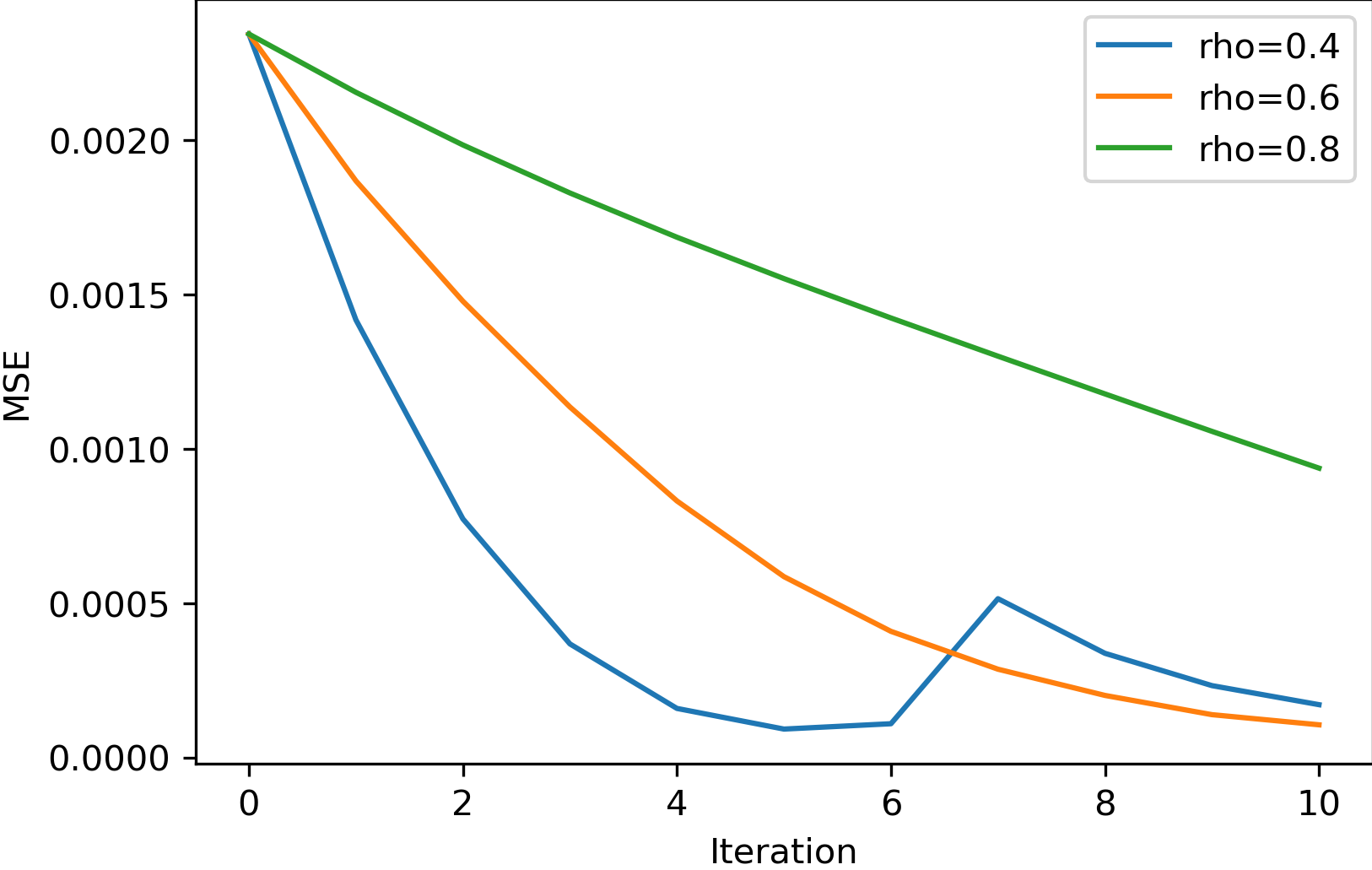}
  \subcaption{$B_2$, $k=3$, error graph}
 \end{minipage}
\end{tabular}

\begin{tabular}{c}
\hspace{-2.5cm}
 \begin{minipage}[b]{0.4\linewidth}
  \centering
  \includegraphics[keepaspectratio, scale=0.45]
  {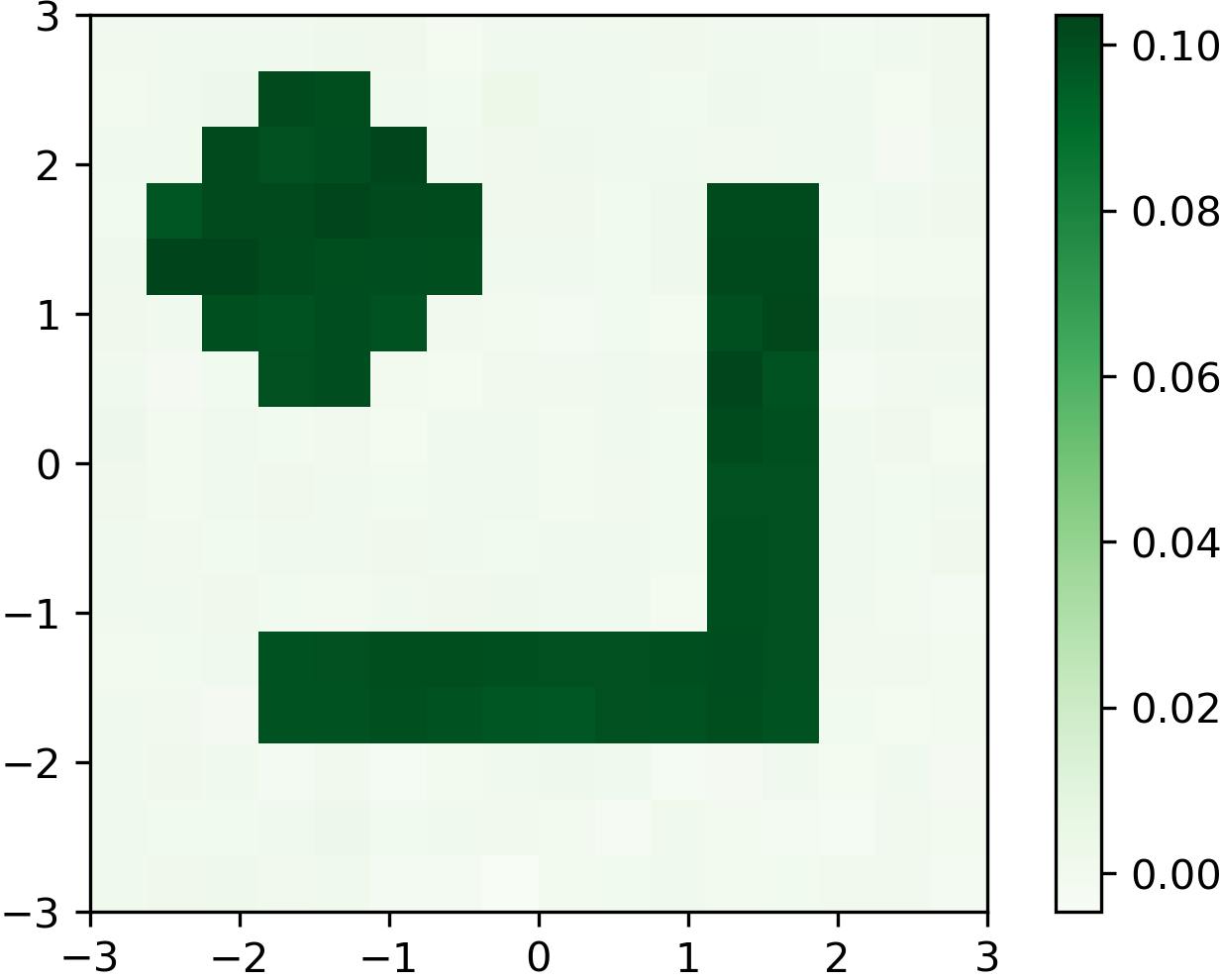}
  \subcaption{$B_2$, $k=7$, $\rho=0.4$}
 \end{minipage}
 \begin{minipage}[b]{0.4\linewidth}
  \centering
  \includegraphics[keepaspectratio, scale=0.45]
  {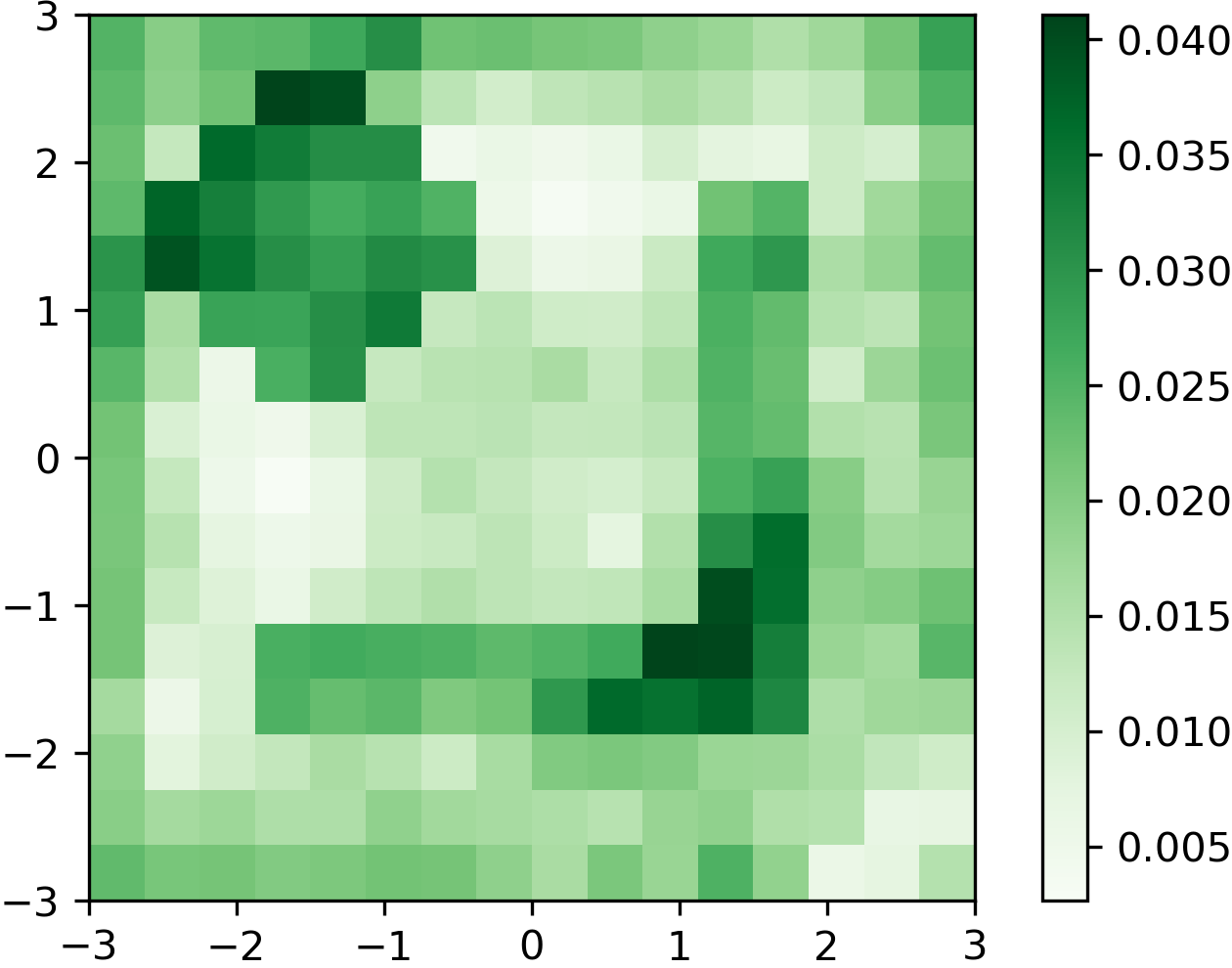}
  \subcaption{$B_2$, $k=7$, $\rho=0.8$}
 \end{minipage}
 \begin{minipage}[b]{0.4\linewidth}
  \centering
  \includegraphics[keepaspectratio, scale=0.45]
  {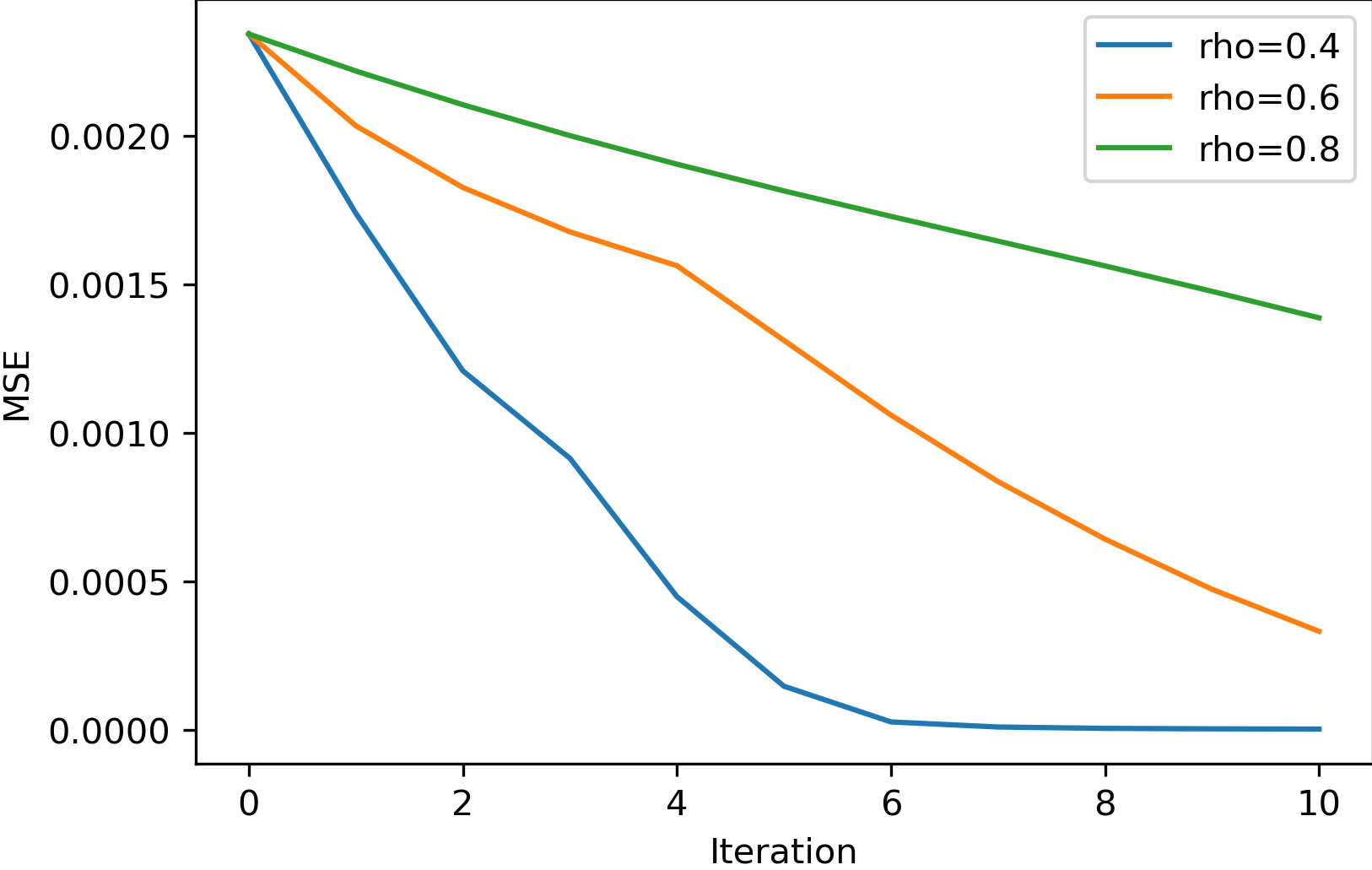}
  \subcaption{$B_2$, $k=7$, error graph}
 \end{minipage}
\end{tabular}
\caption{KFL (nosiy $\sigma=0.01$)}
\label{KFNn001}
\end{figure}

\begin{figure}[h]
\begin{tabular}{c}
\hspace{-2.5cm}
 \begin{minipage}[b]{0.4\linewidth}
  \centering
  \includegraphics[keepaspectratio, scale=0.45]
  {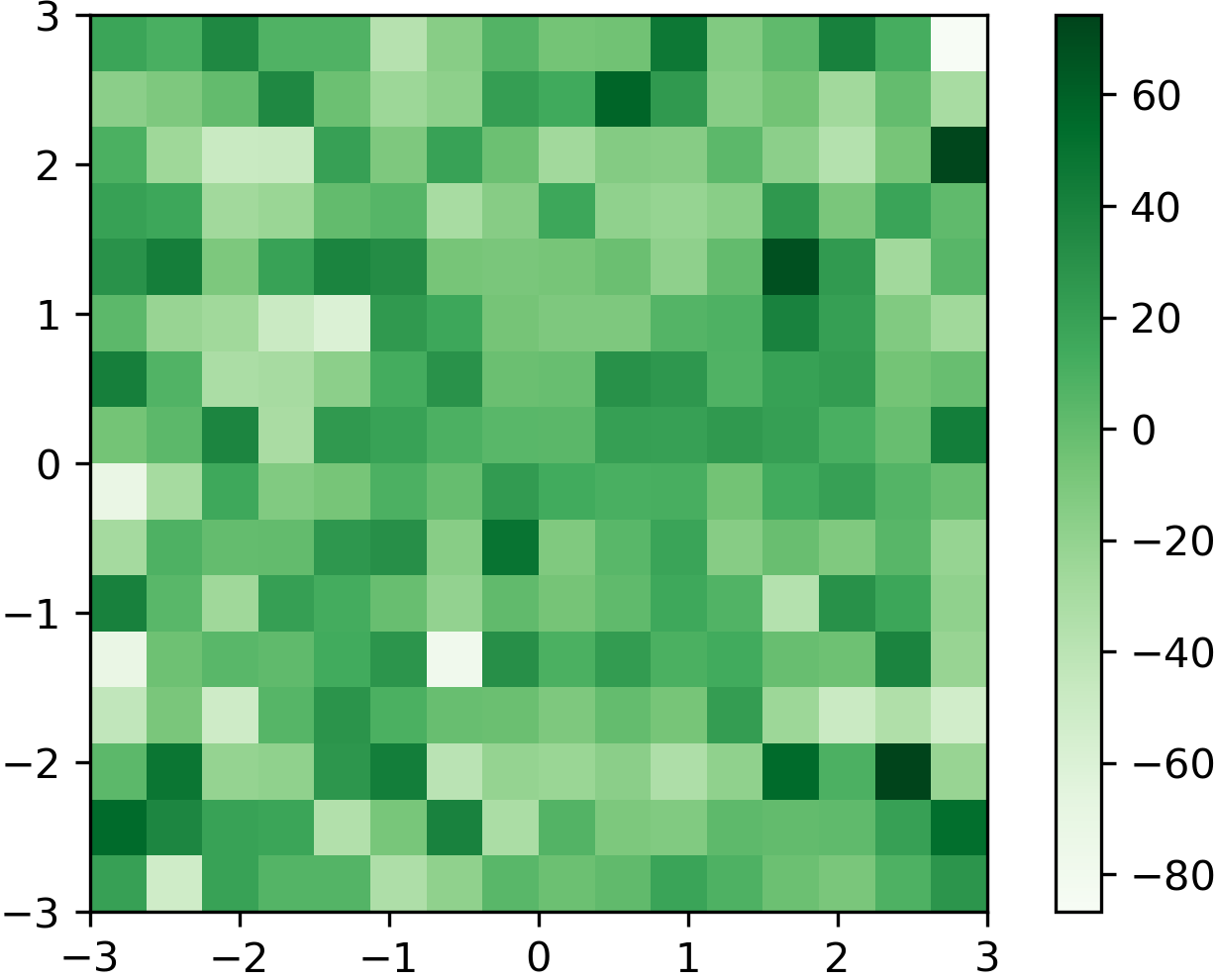}
  \subcaption{$B_1$, $k=3$, $\rho=0.4$}
 \end{minipage}
 \begin{minipage}[b]{0.4\linewidth}
  \centering
  \includegraphics[keepaspectratio, scale=0.45]
  {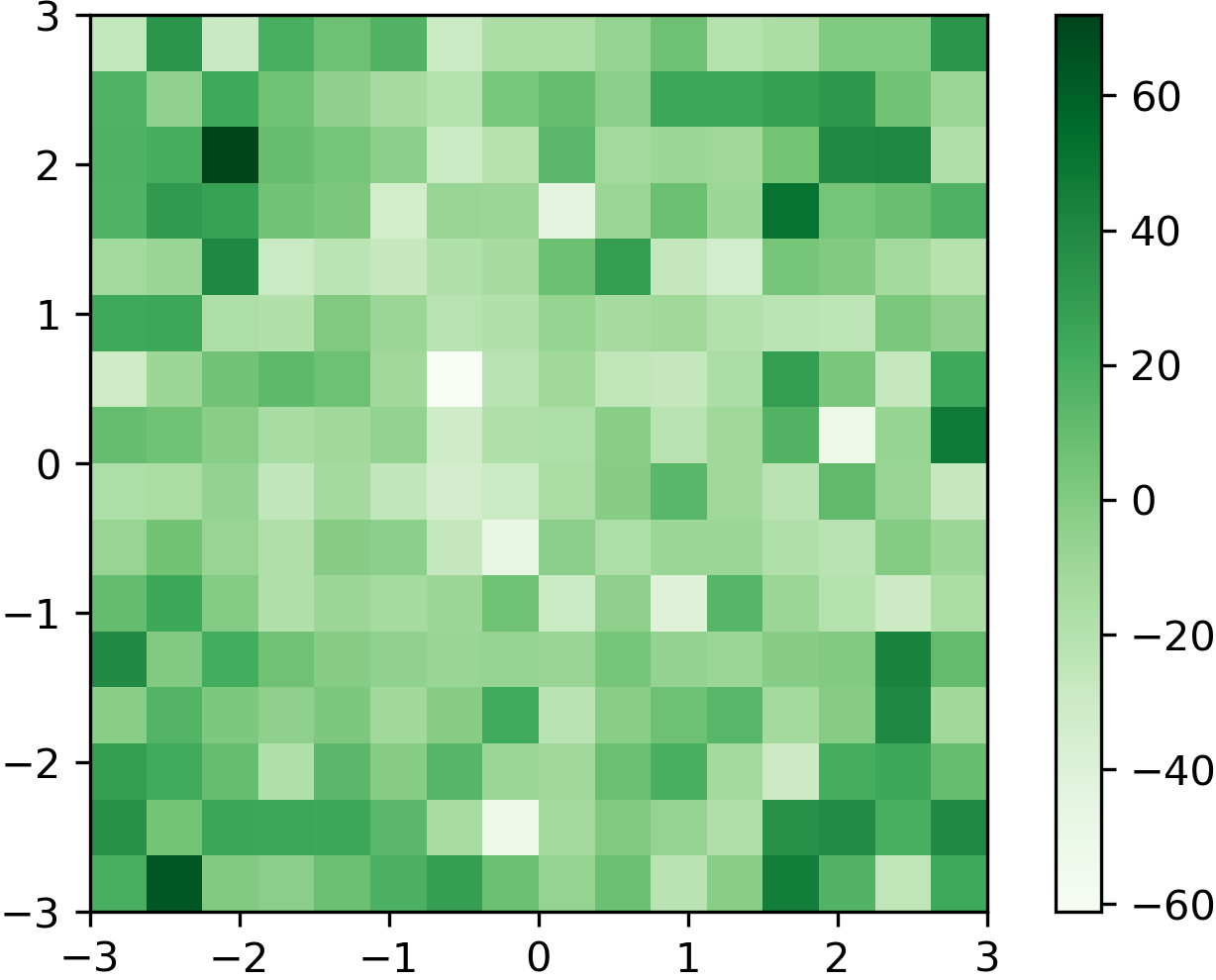}
  \subcaption{$B_1$, $k=3$, $\rho=0.8$}
 \end{minipage}
 \begin{minipage}[b]{0.4\linewidth}
  \centering
  \includegraphics[keepaspectratio, scale=0.45]
  {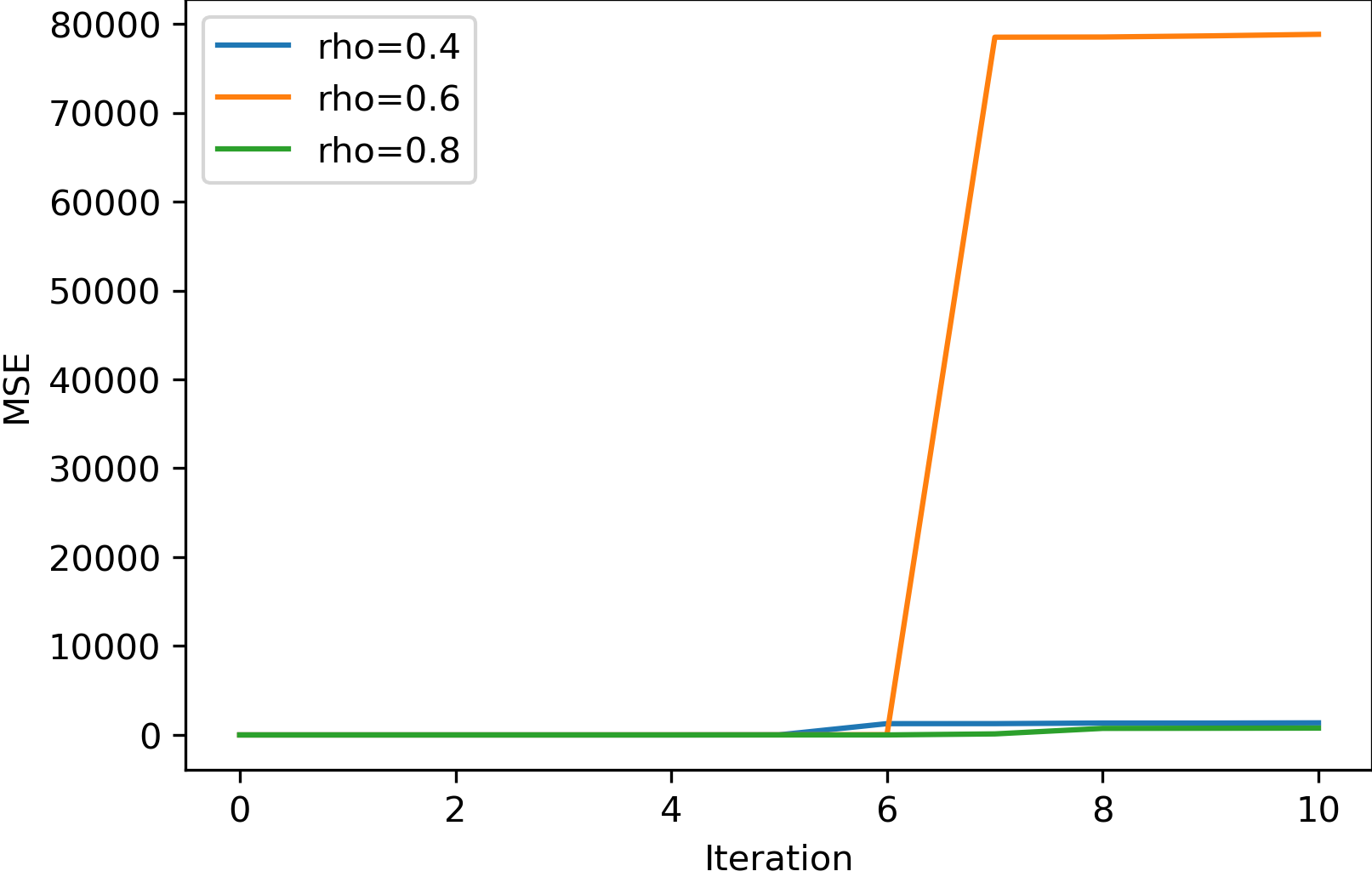}
  \subcaption{$B_1$, $k=3$, error graph}
 \end{minipage}
\end{tabular}

\begin{tabular}{c}
\hspace{-2.5cm}
 \begin{minipage}[b]{0.4\linewidth}
  \centering
  \includegraphics[keepaspectratio, scale=0.45]
  {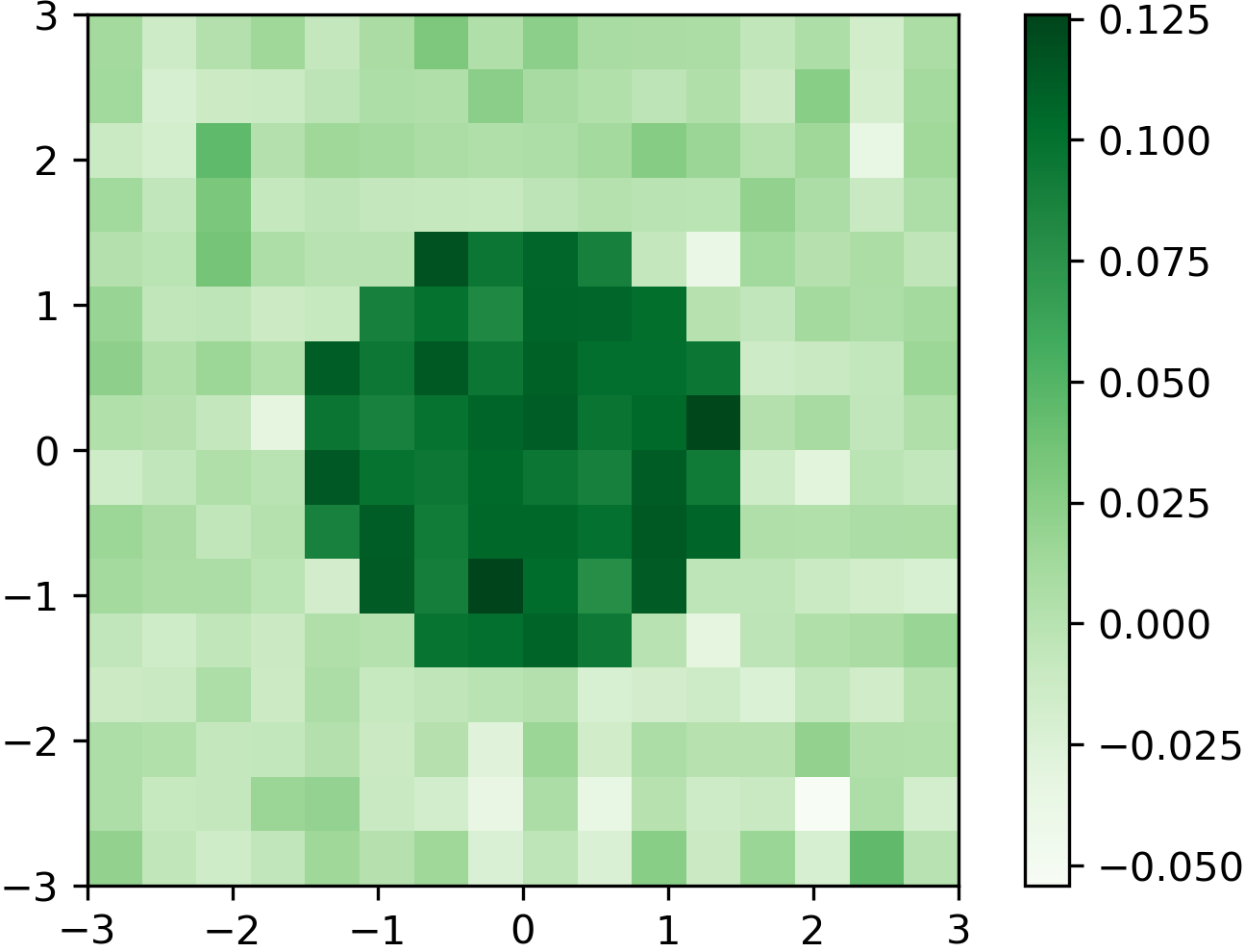}
  \subcaption{$B_1$, $k=7$, $\rho=0.4$}
 \end{minipage}
 \begin{minipage}[b]{0.4\linewidth}
  \centering
  \includegraphics[keepaspectratio, scale=0.45]
  {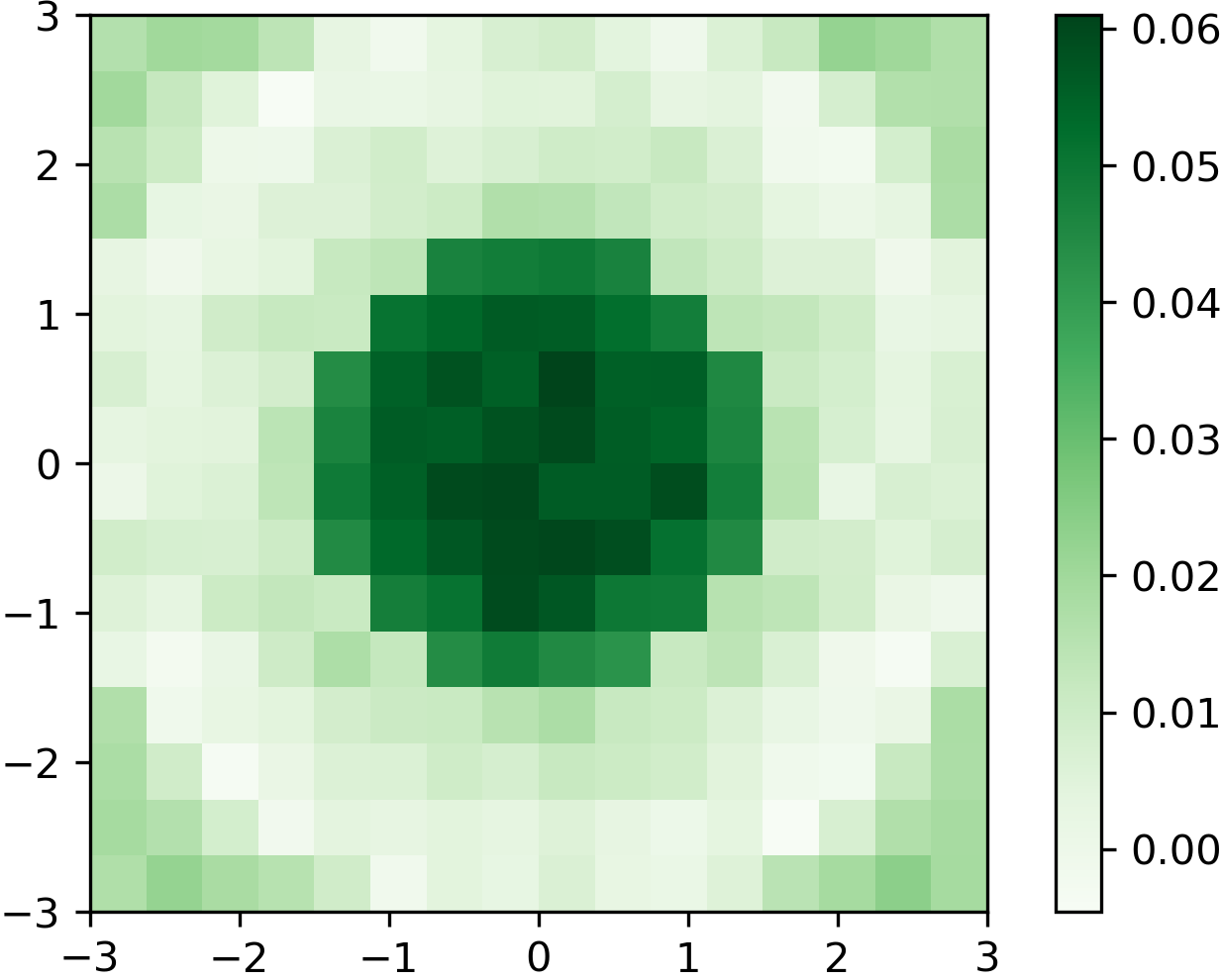} 
  \subcaption{$B_1$, $k=7$, $\rho=0.8$}
 \end{minipage}
 \begin{minipage}[b]{0.4\linewidth}
  \centering
  \includegraphics[keepaspectratio, scale=0.45]
  {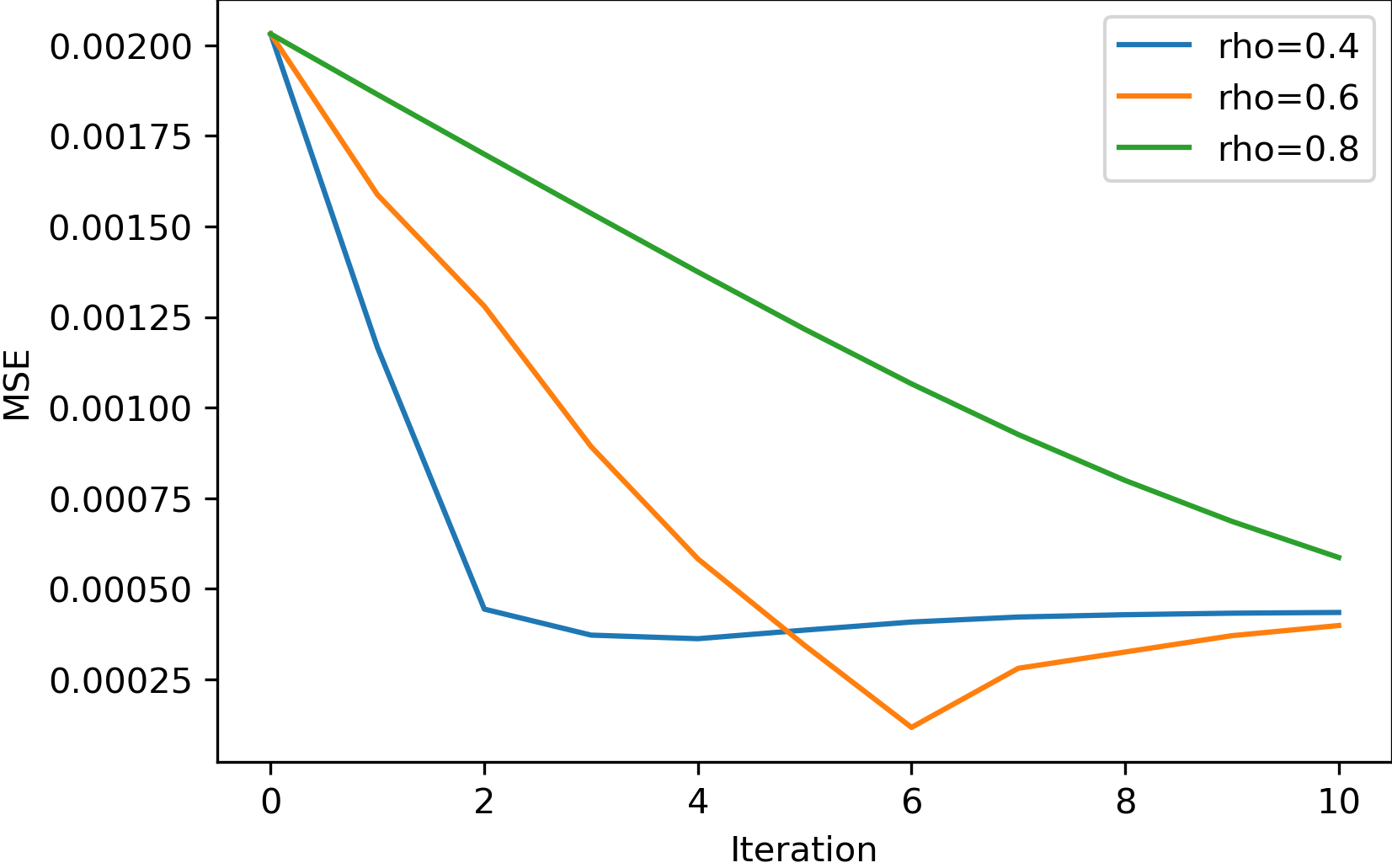}
  \subcaption{$B_1$, $k=7$, error graph}
 \end{minipage}
\end{tabular}

\begin{tabular}{c}
\hspace{-2.5cm}
 \begin{minipage}[b]{0.4\linewidth}
  \centering
  \includegraphics[keepaspectratio, scale=0.45]
  {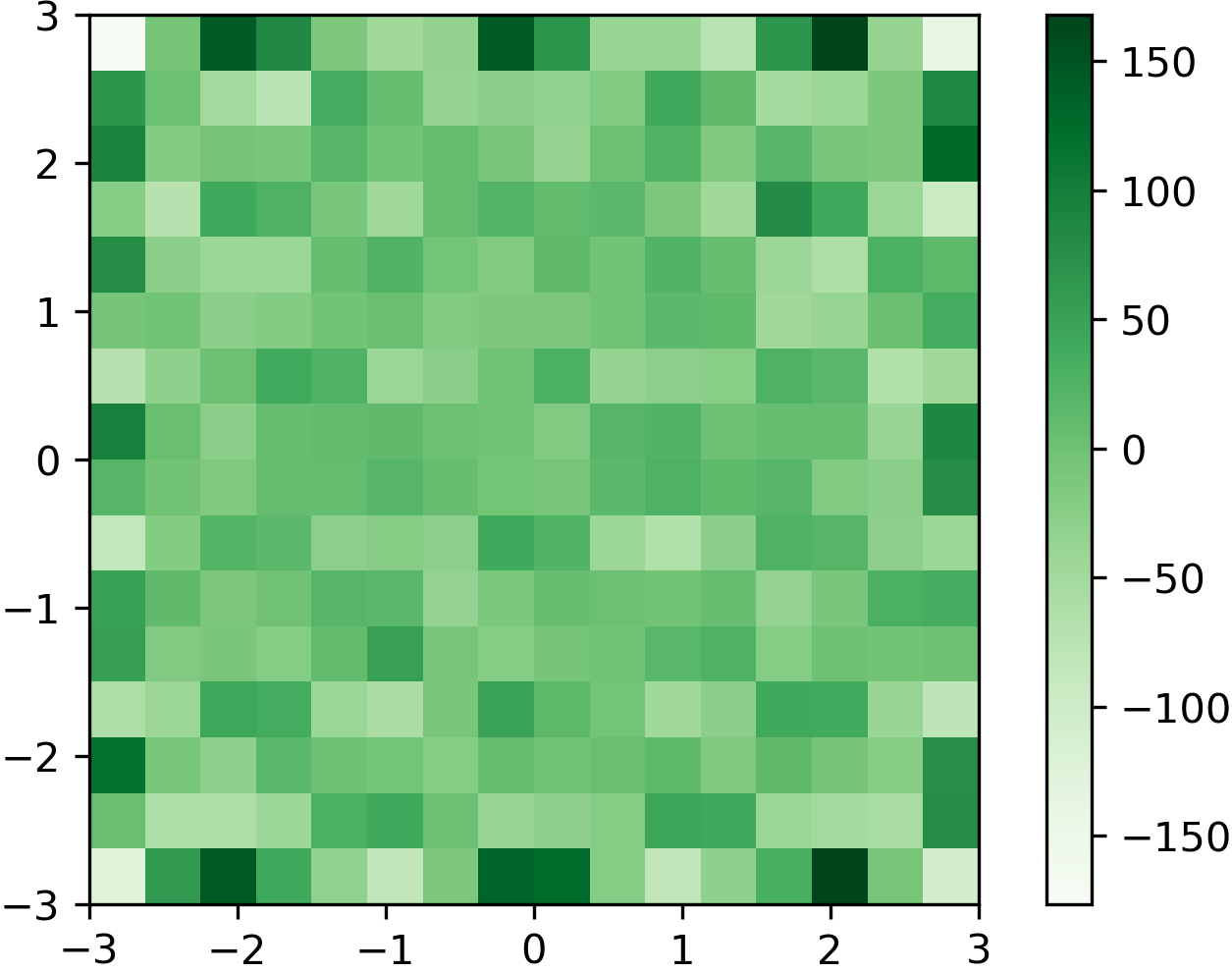}
  \subcaption{$B_2$, $k=3$, $\rho=0.4$}
 \end{minipage}
 \begin{minipage}[b]{0.4\linewidth}
  \centering
  \includegraphics[keepaspectratio, scale=0.45]
  {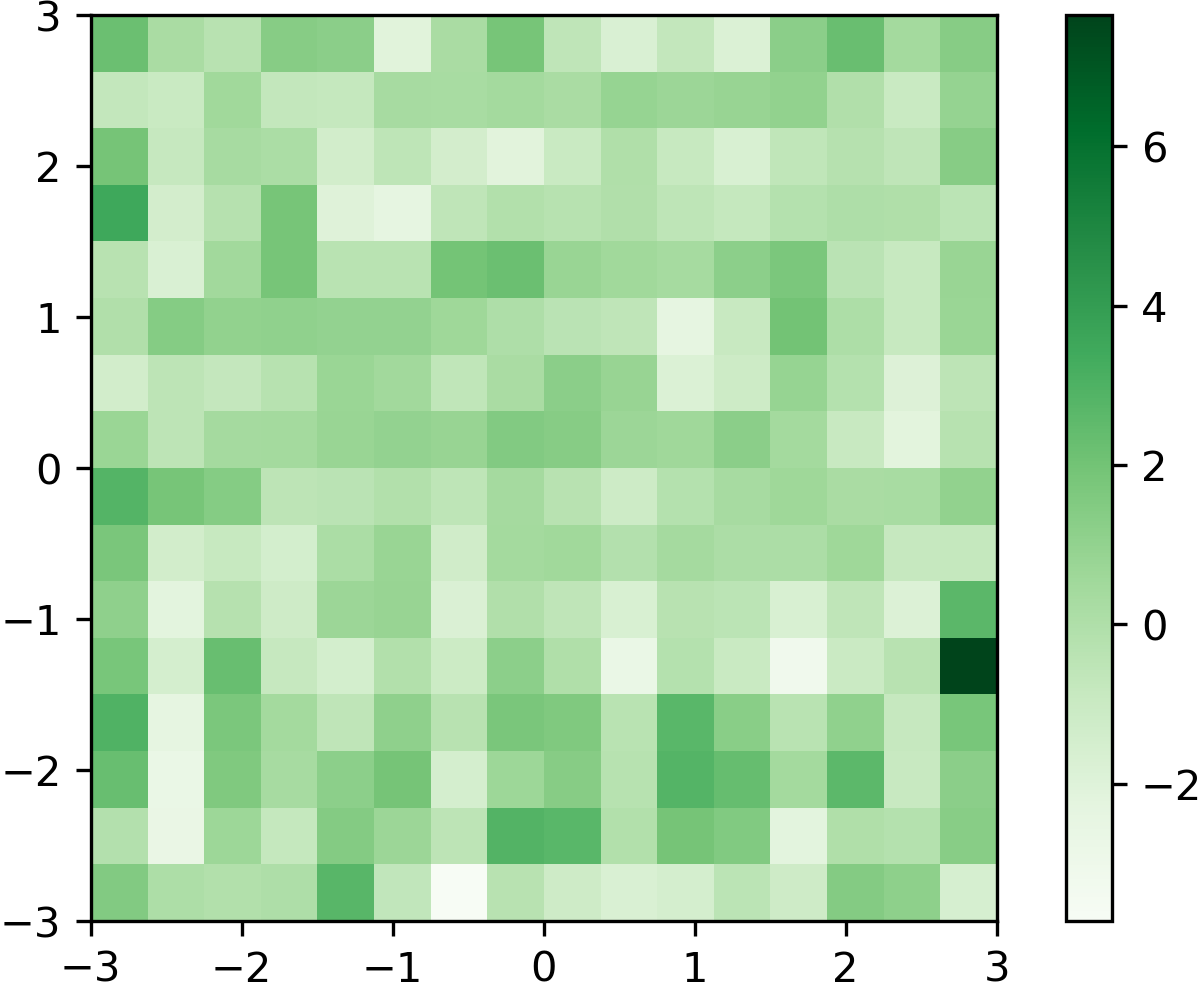}
  \subcaption{$B_2$, $k=3$, $\rho=0.8$}
 \end{minipage}
 \begin{minipage}[b]{0.4\linewidth}
  \centering
  \includegraphics[keepaspectratio, scale=0.45]
  {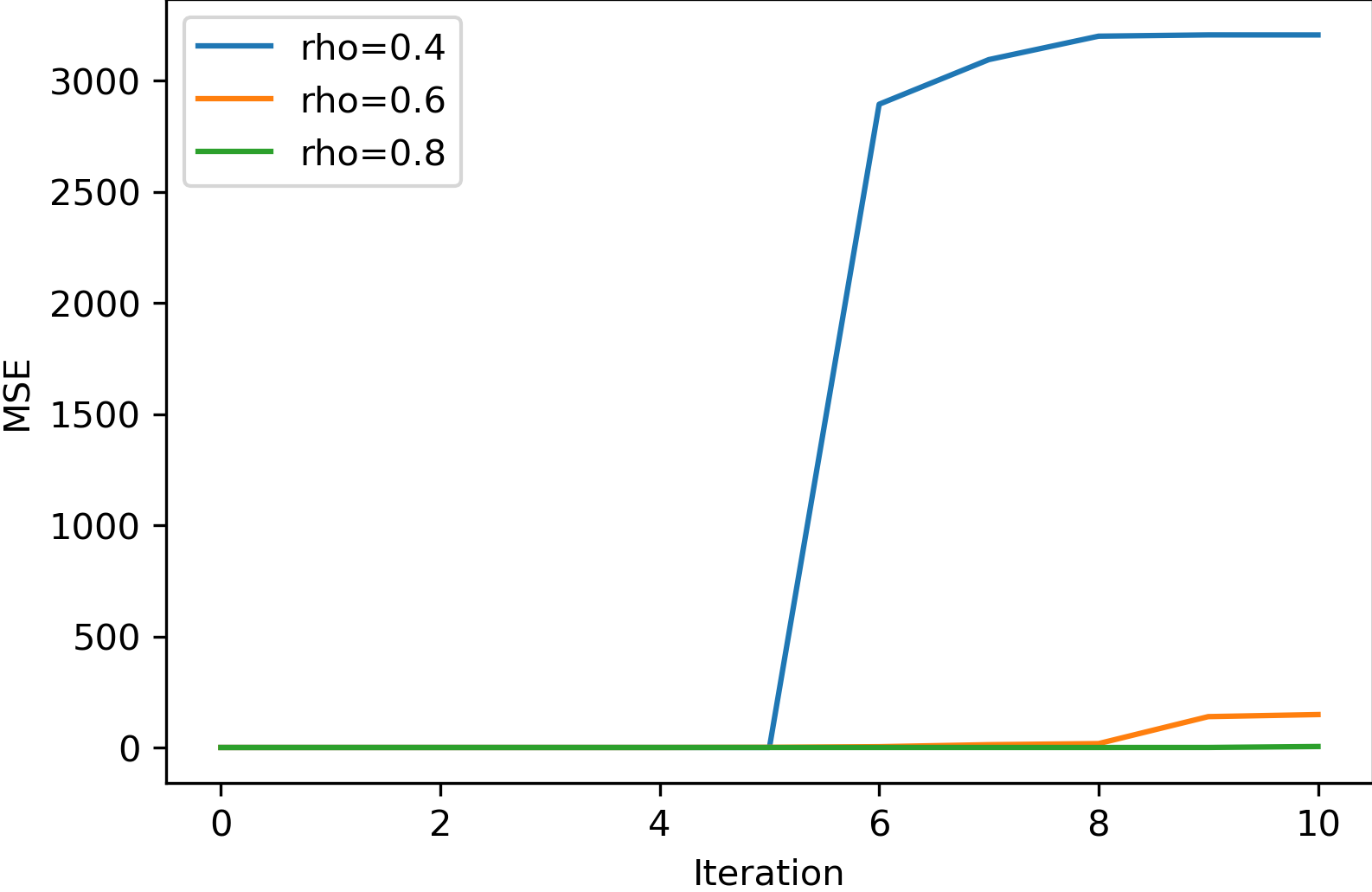}
  \subcaption{$B_2$, $k=3$, error graph}
 \end{minipage}
\end{tabular}

\begin{tabular}{c}
\hspace{-2.5cm}
 \begin{minipage}[b]{0.4\linewidth}
  \centering
  \includegraphics[keepaspectratio, scale=0.45]
  {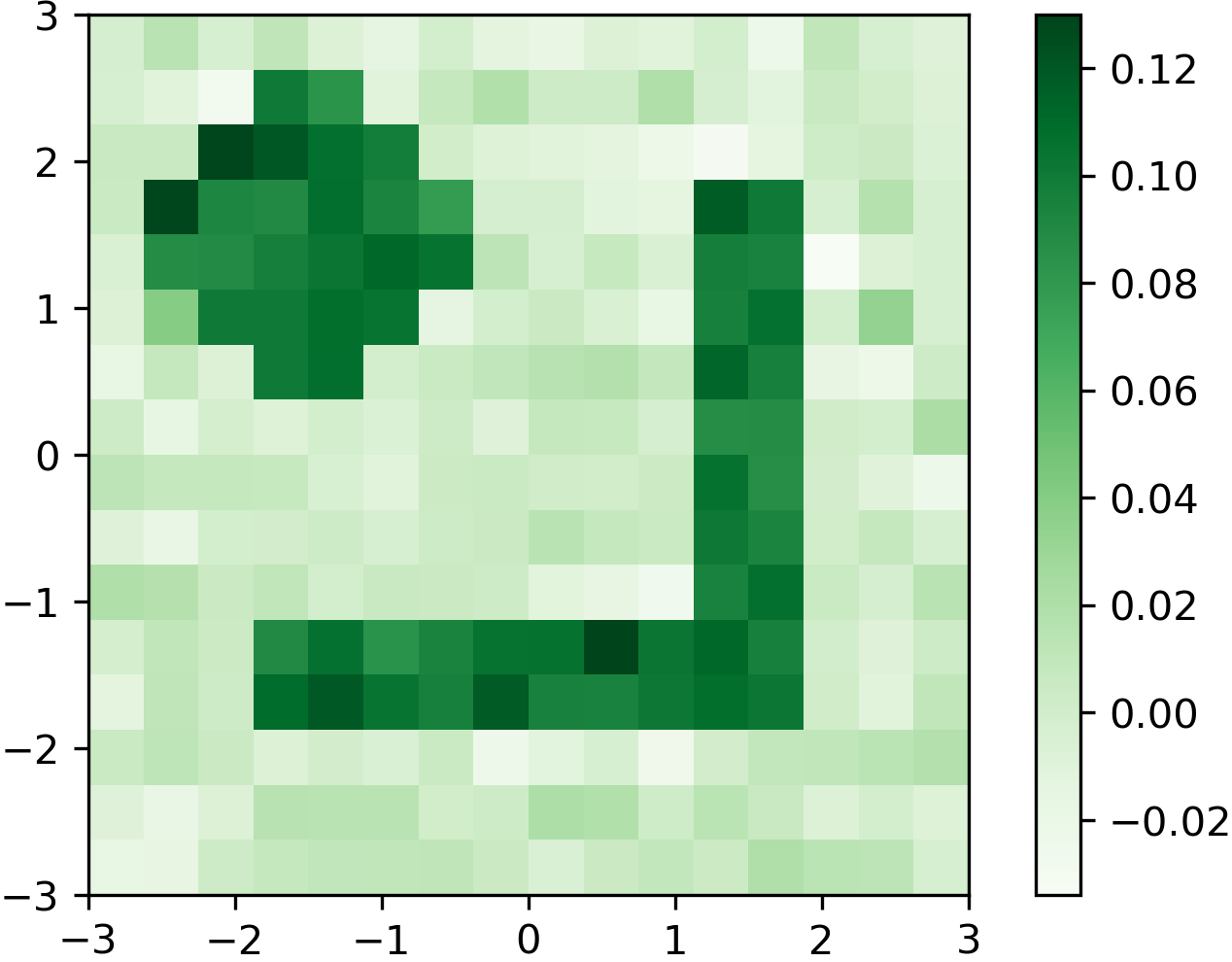}
  \subcaption{$B_2$, $k=7$, $\rho=0.4$}
 \end{minipage}
 \begin{minipage}[b]{0.4\linewidth}
  \centering
  \includegraphics[keepaspectratio, scale=0.45]
  {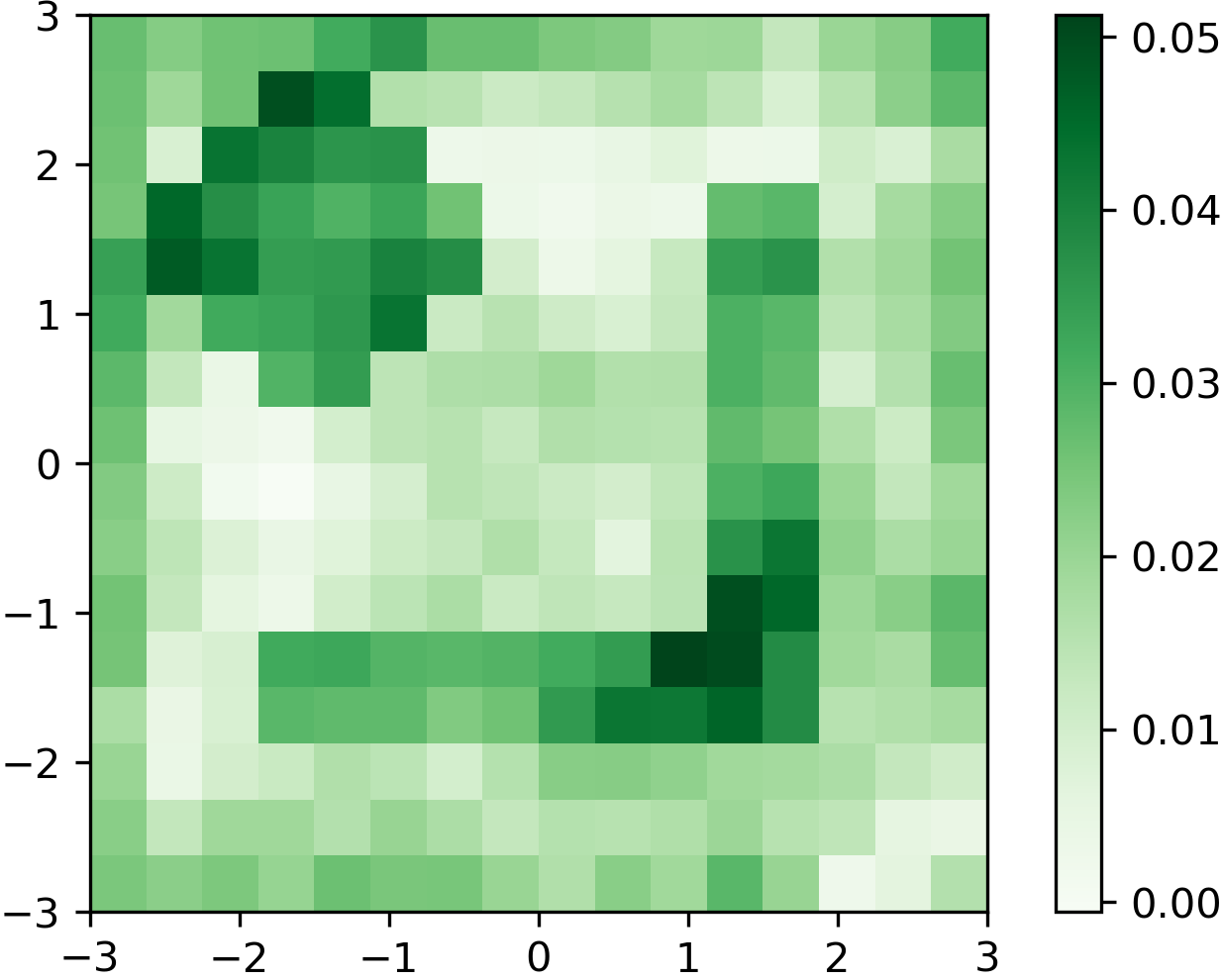}
  \subcaption{$B_2$, $k=7$, $\rho=0.8$}
 \end{minipage}
 \begin{minipage}[b]{0.4\linewidth}
  \centering
  \includegraphics[keepaspectratio, scale=0.45]
  {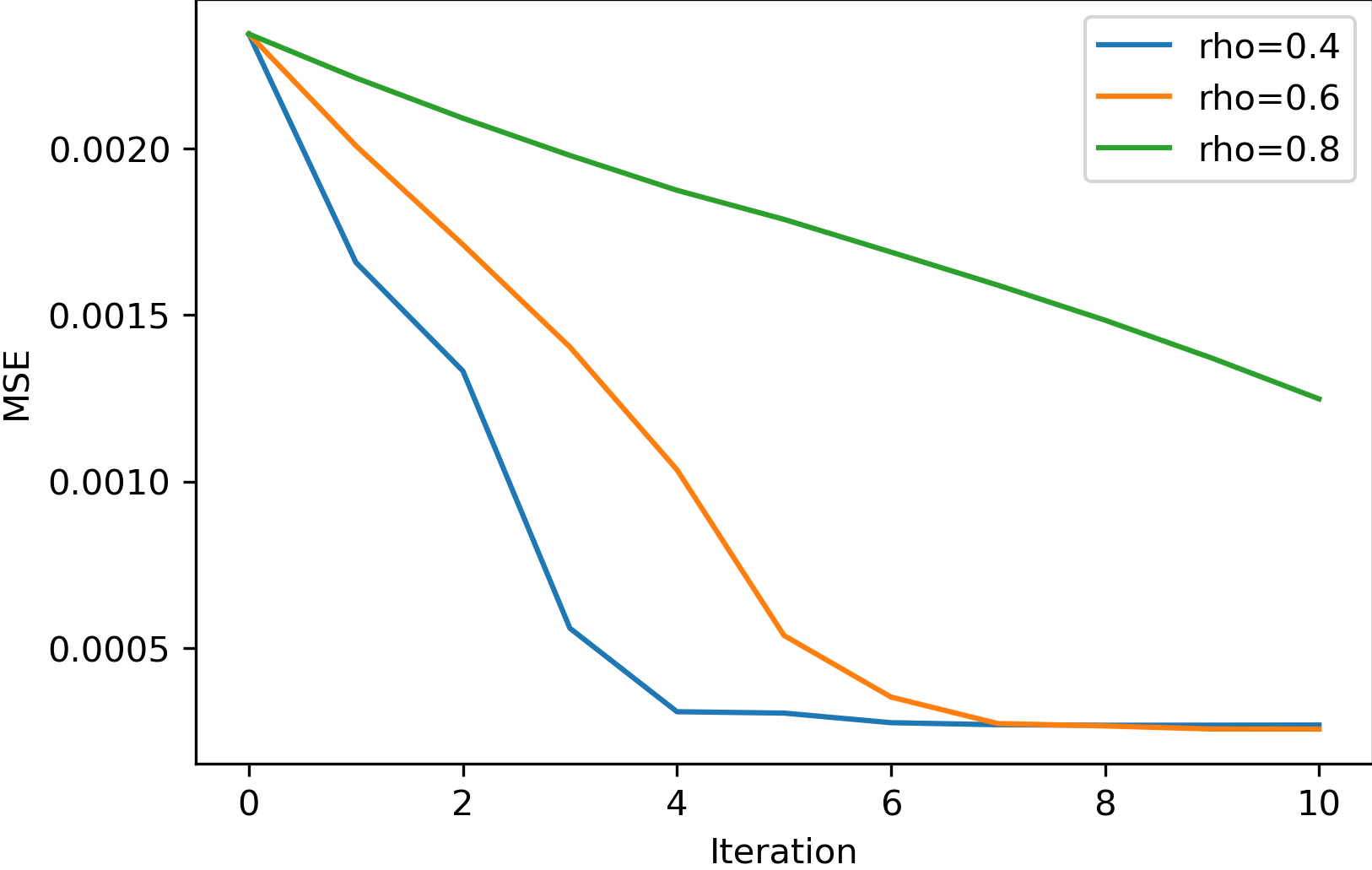}
  \subcaption{$B_2$, $k=7$, error graph}
 \end{minipage}
\end{tabular}
\caption{KFL (nosiy $\sigma=0.1$)}
\label{KFNn01}
\end{figure}

\begin{figure}[h]
\begin{tabular}{c}
\hspace{-2.5cm}
 \begin{minipage}[b]{0.4\linewidth}
  \centering
  \includegraphics[keepaspectratio, scale=0.45]
  {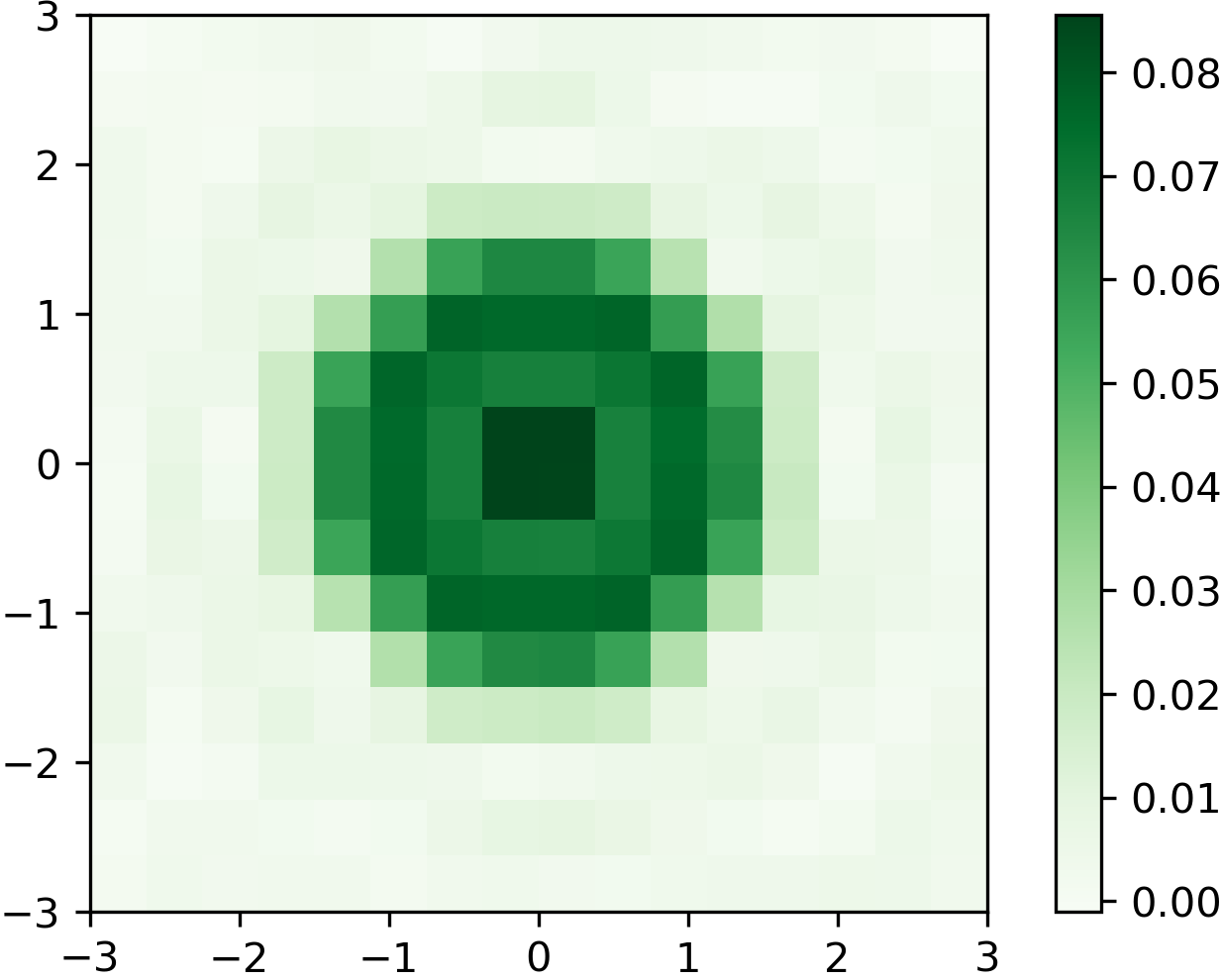}
  \subcaption{$B_1$, $k=3$, $\alpha=50$}
 \end{minipage}
 \begin{minipage}[b]{0.4\linewidth}
  \centering
  \includegraphics[keepaspectratio, scale=0.45]
  {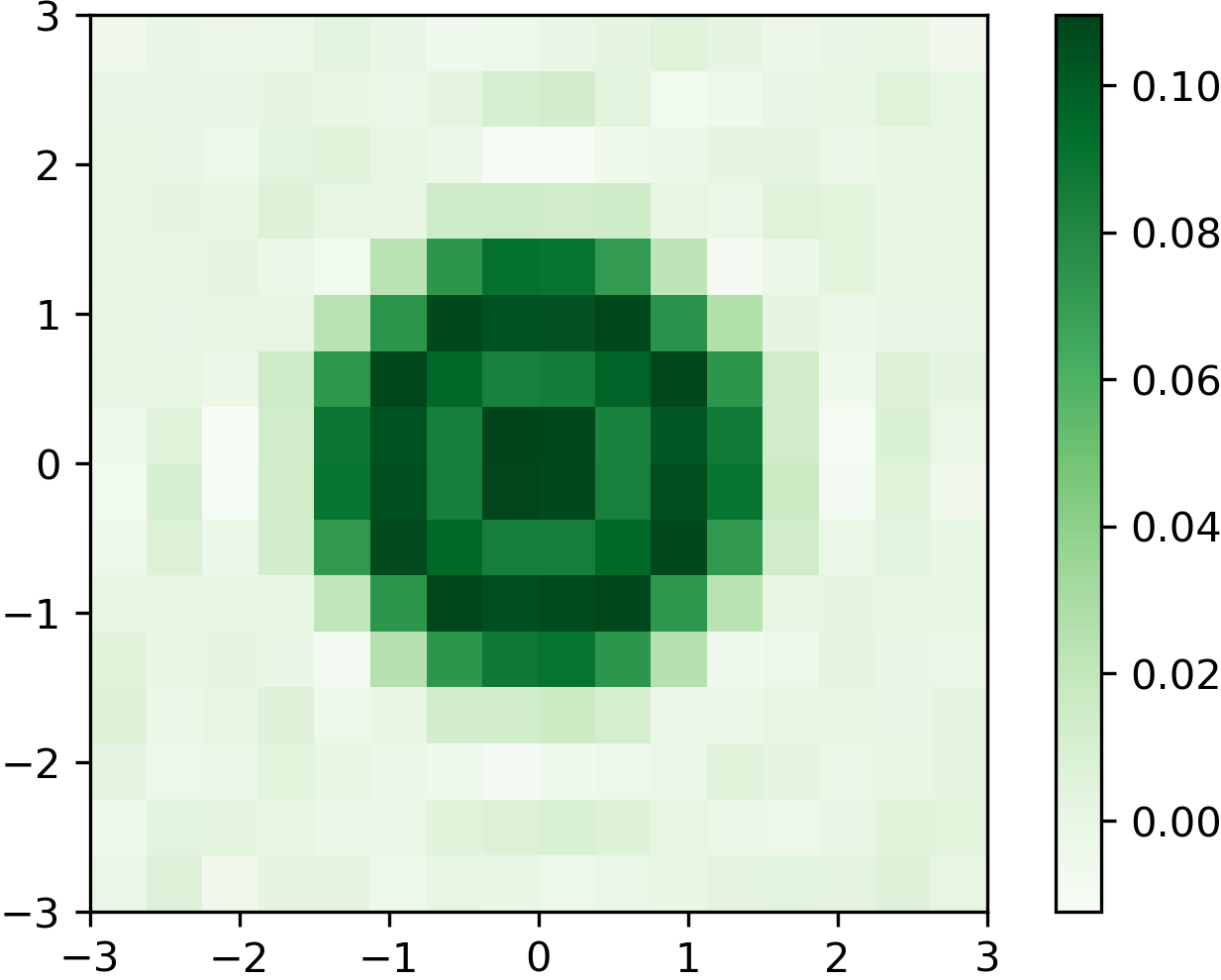}
  \subcaption{$B_1$, $k=3$, $\alpha=5$}
 \end{minipage}
 \begin{minipage}[b]{0.4\linewidth}
  \centering
  \includegraphics[keepaspectratio, scale=0.45]
  {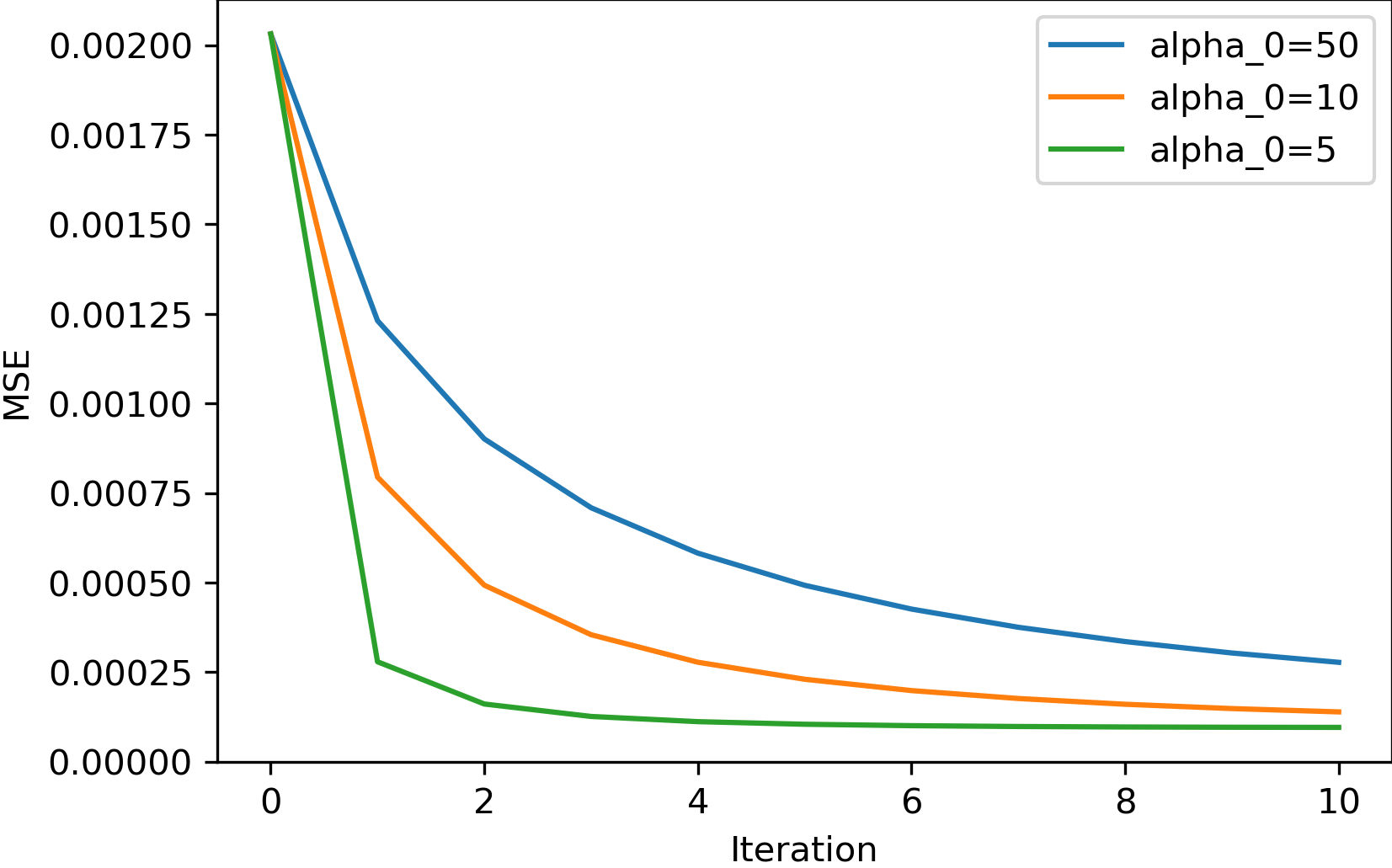}
  \subcaption{$B_1$, $k=3$, error graph}
 \end{minipage}
\end{tabular}

\begin{tabular}{c}
\hspace{-2.5cm}
 \begin{minipage}[b]{0.4\linewidth}
  \centering
  \includegraphics[keepaspectratio, scale=0.45]
  {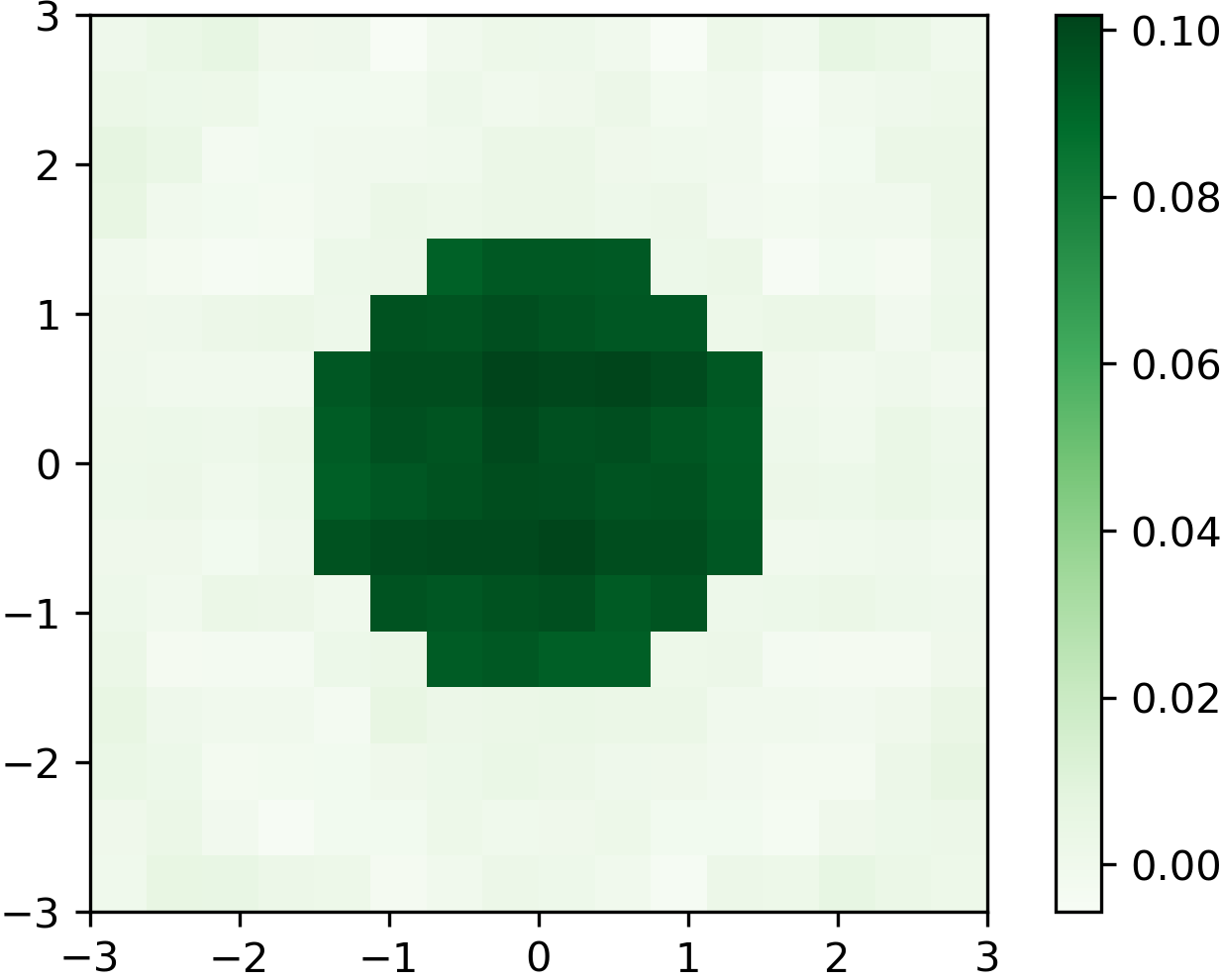}
  \subcaption{$B_1$, $k=7$, $\alpha=50$}
 \end{minipage}
 \begin{minipage}[b]{0.4\linewidth}
  \centering
  \includegraphics[keepaspectratio, scale=0.45]
  {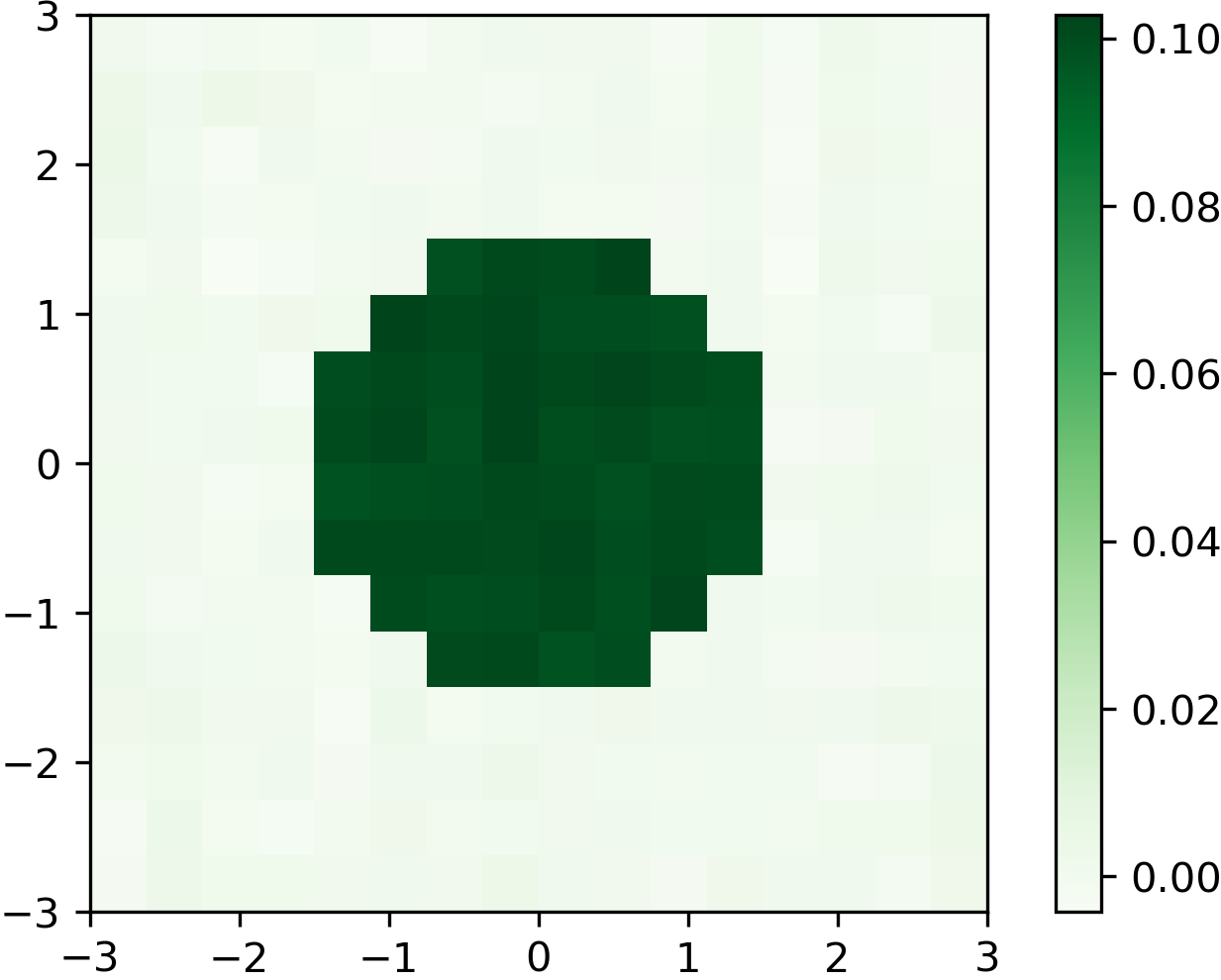}
  \subcaption{$B_1$, $k=7$, $\alpha=5$}
 \end{minipage}
 \begin{minipage}[b]{0.4\linewidth}
  \centering
  \includegraphics[keepaspectratio, scale=0.45]
  {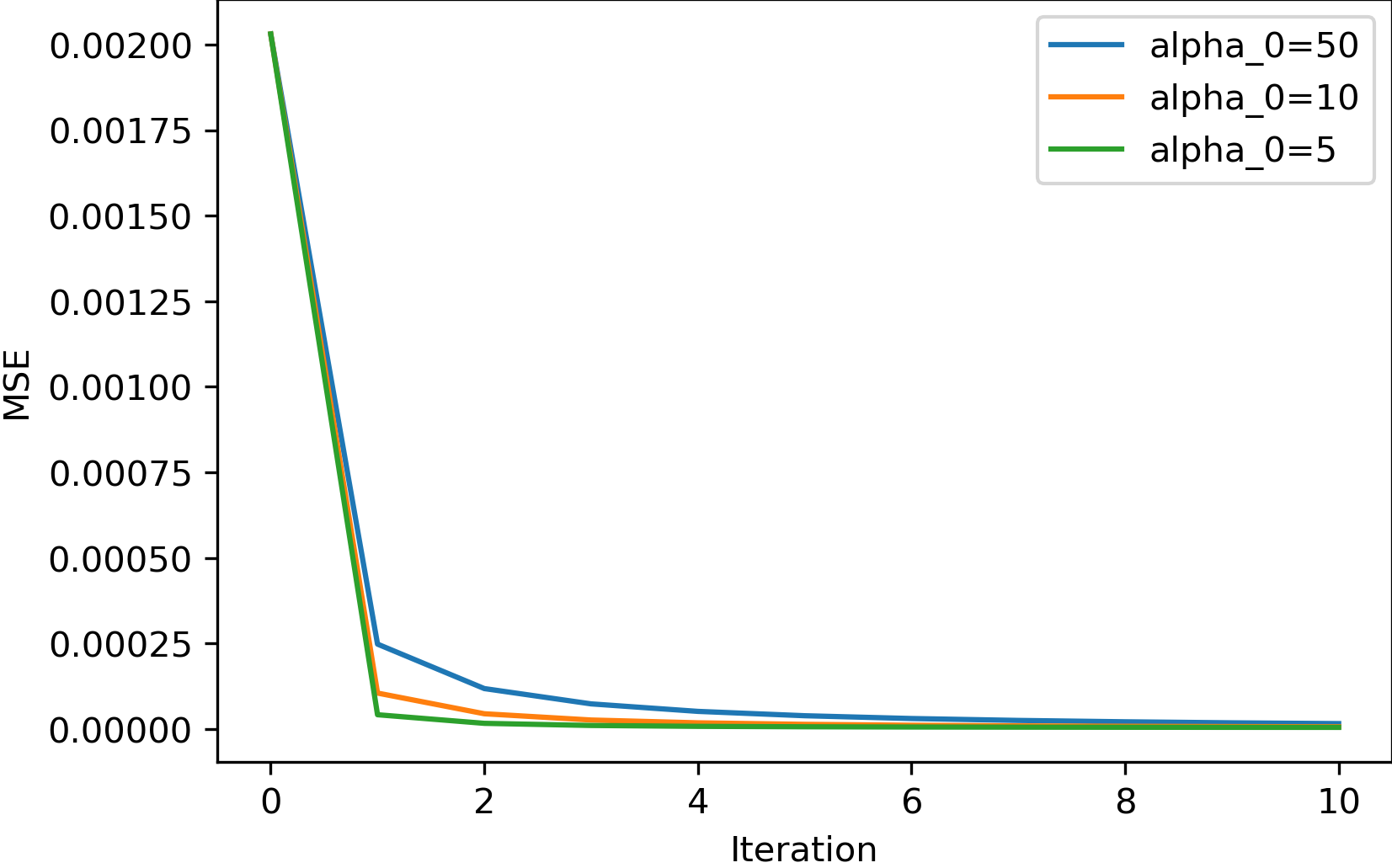}
  \subcaption{$B_1$, $k=7$, error graph}
 \end{minipage}
\end{tabular}

\begin{tabular}{c}
\hspace{-2.5cm}
 \begin{minipage}[b]{0.4\linewidth}
  \centering
  \includegraphics[keepaspectratio, scale=0.45]
  {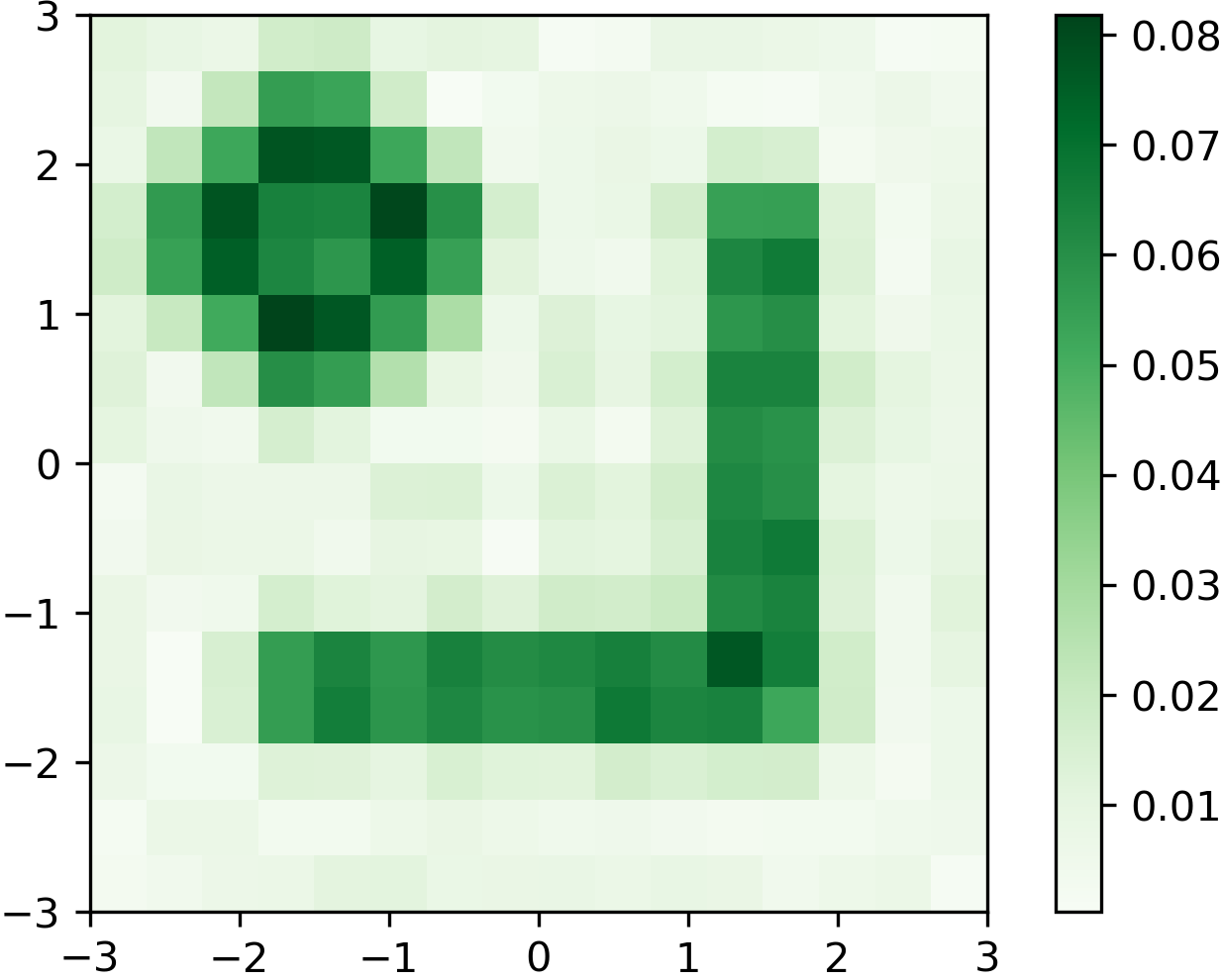}
  \subcaption{$B_2$, $k=3$, $\alpha=50$}
 \end{minipage}
 \begin{minipage}[b]{0.4\linewidth}
  \centering
  \includegraphics[keepaspectratio, scale=0.45]
  {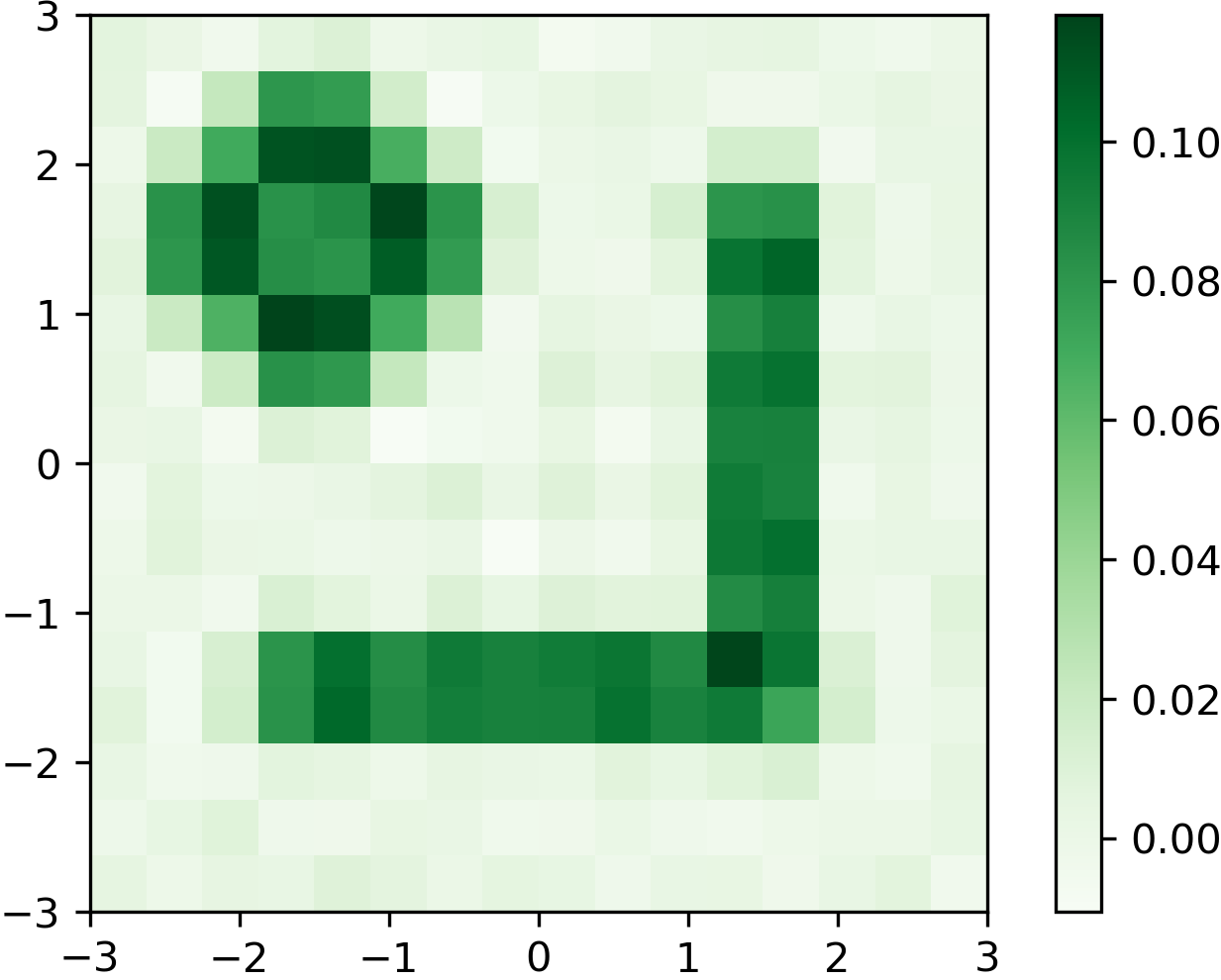}
  \subcaption{$B_2$, $k=3$, $\alpha=5$}
 \end{minipage}
 \begin{minipage}[b]{0.4\linewidth}
  \centering
  \includegraphics[keepaspectratio, scale=0.45]
  {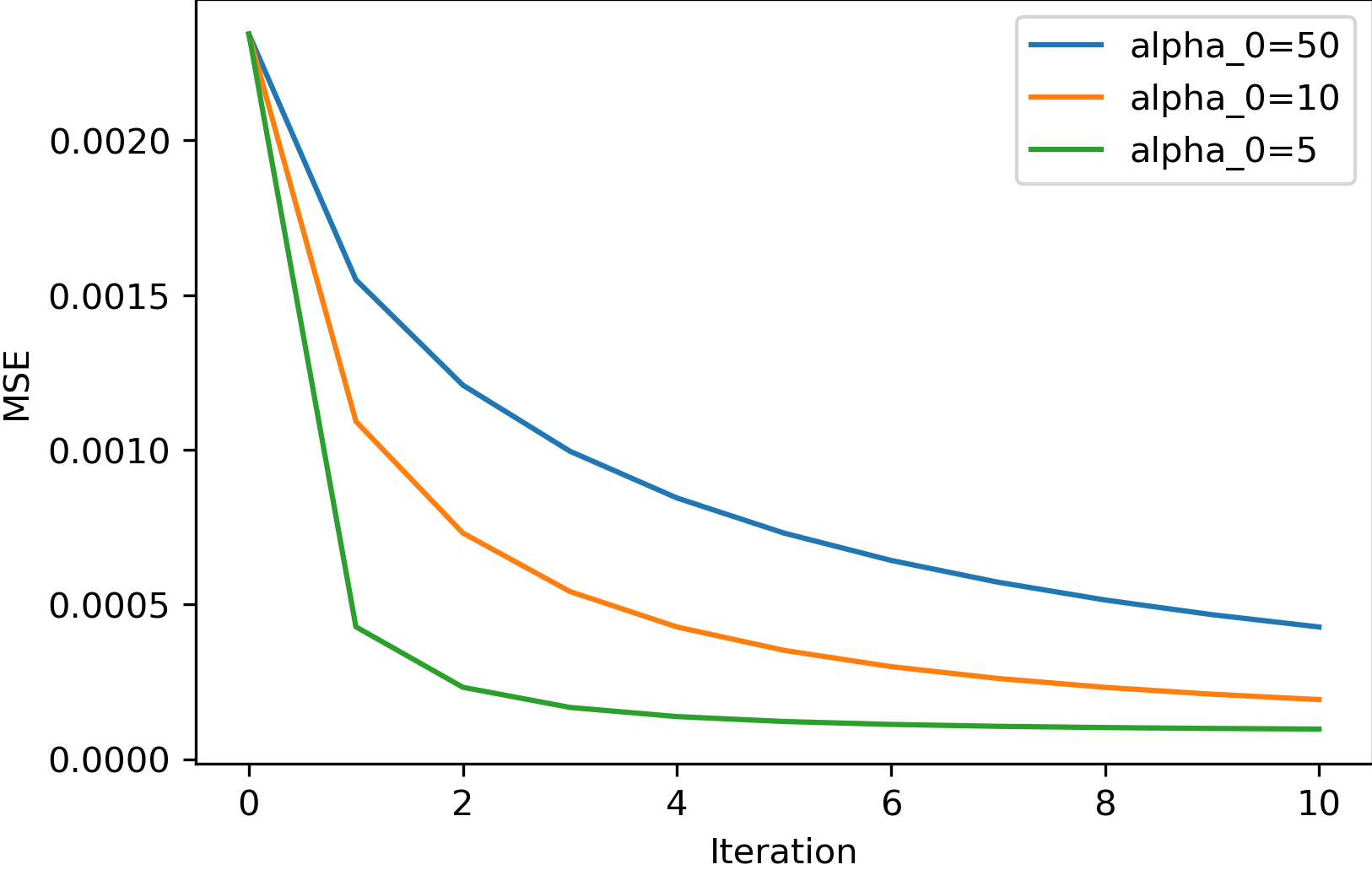}
  \subcaption{$B_2$, $k=3$, error graph}
 \end{minipage}
\end{tabular}

\begin{tabular}{c}
\hspace{-2.5cm}
 \begin{minipage}[b]{0.4\linewidth}
  \centering
  \includegraphics[keepaspectratio, scale=0.45]
  {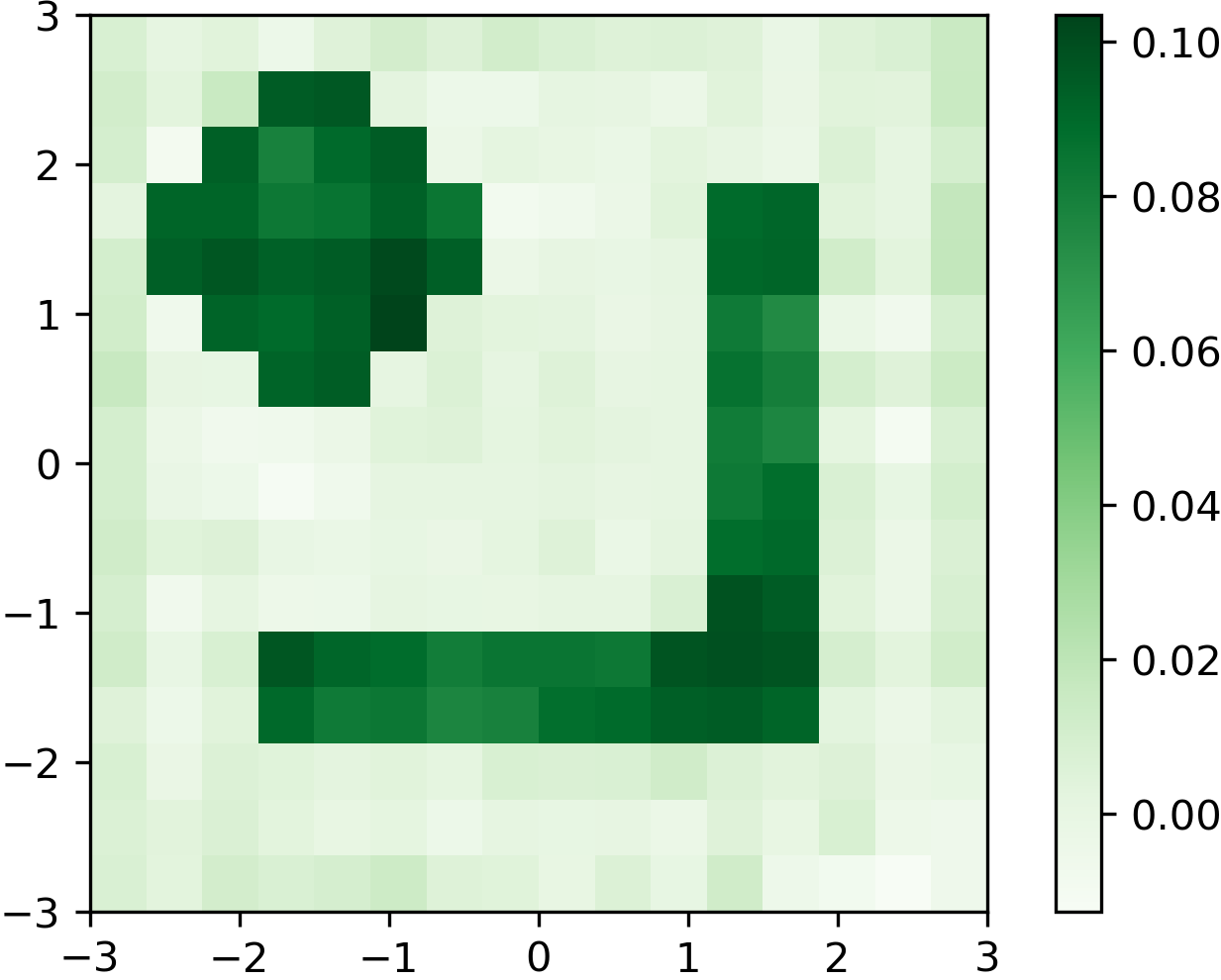}
  \subcaption{$B_2$, $k=7$, $\alpha=50$}
 \end{minipage}
 \begin{minipage}[b]{0.4\linewidth}
  \centering
  \includegraphics[keepaspectratio, scale=0.45]
  {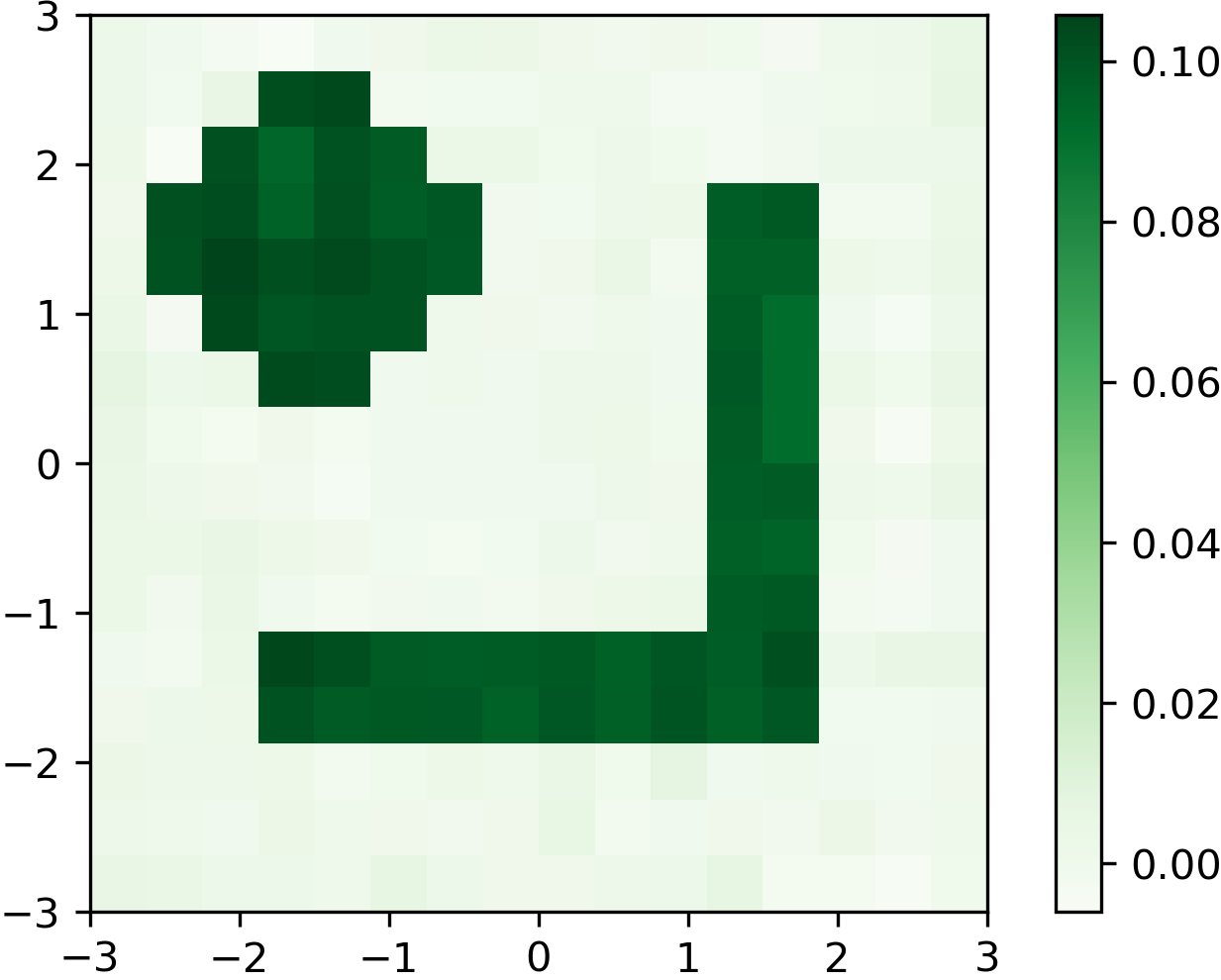}
  \subcaption{$B_2$, $k=7$, $\alpha=5$}
 \end{minipage}
 \begin{minipage}[b]{0.4\linewidth}
  \centering
  \includegraphics[keepaspectratio, scale=0.45]
  {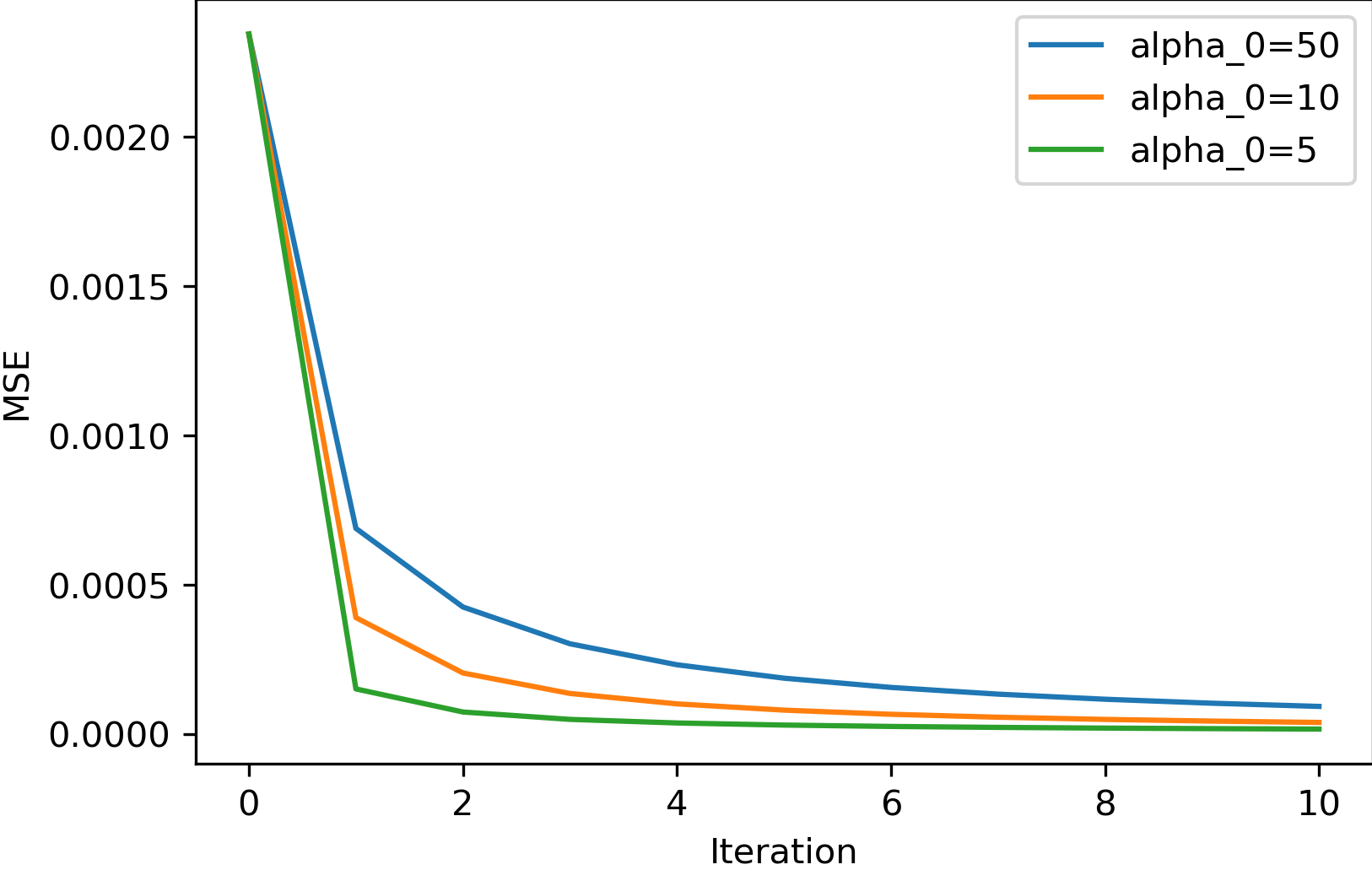}
  \subcaption{$B_2$, $k=7$, error graph}
 \end{minipage}
\end{tabular}
\caption{EKF (nosiy $\sigma=0.01$)}
\label{EKFn001}
\end{figure}

\begin{figure}[h]
\begin{tabular}{c}
\hspace{-2.5cm}
 \begin{minipage}[b]{0.4\linewidth}
  \centering
  \includegraphics[keepaspectratio, scale=0.45]
  {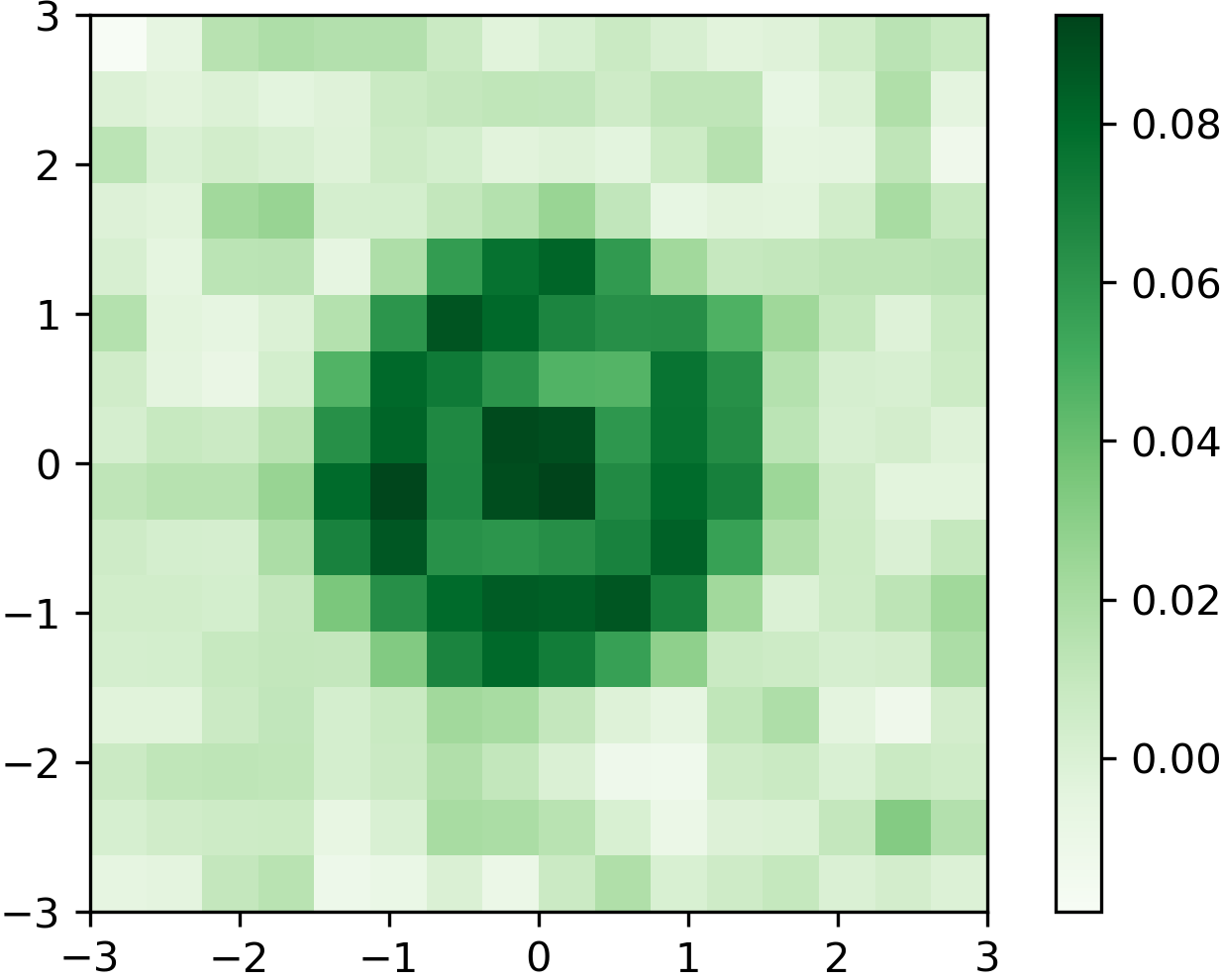}
  \subcaption{$B_1$, $k=3$, $\alpha=50$}
 \end{minipage}
 \begin{minipage}[b]{0.4\linewidth}
  \centering
  \includegraphics[keepaspectratio, scale=0.45]
  {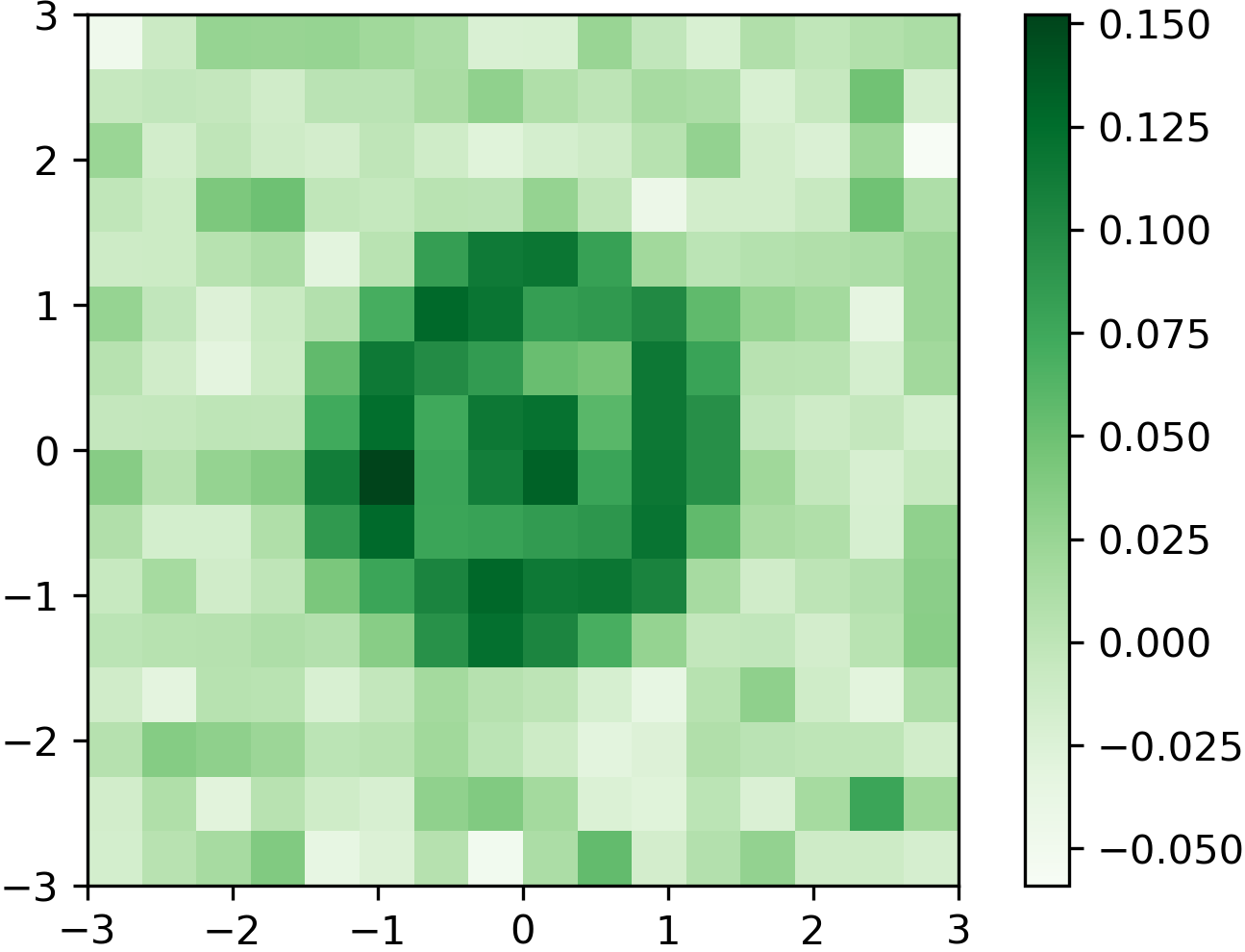}
  \subcaption{$B_1$, $k=3$, $\alpha=5$}
 \end{minipage}
 \begin{minipage}[b]{0.4\linewidth}
  \centering
  \includegraphics[keepaspectratio, scale=0.45]
  {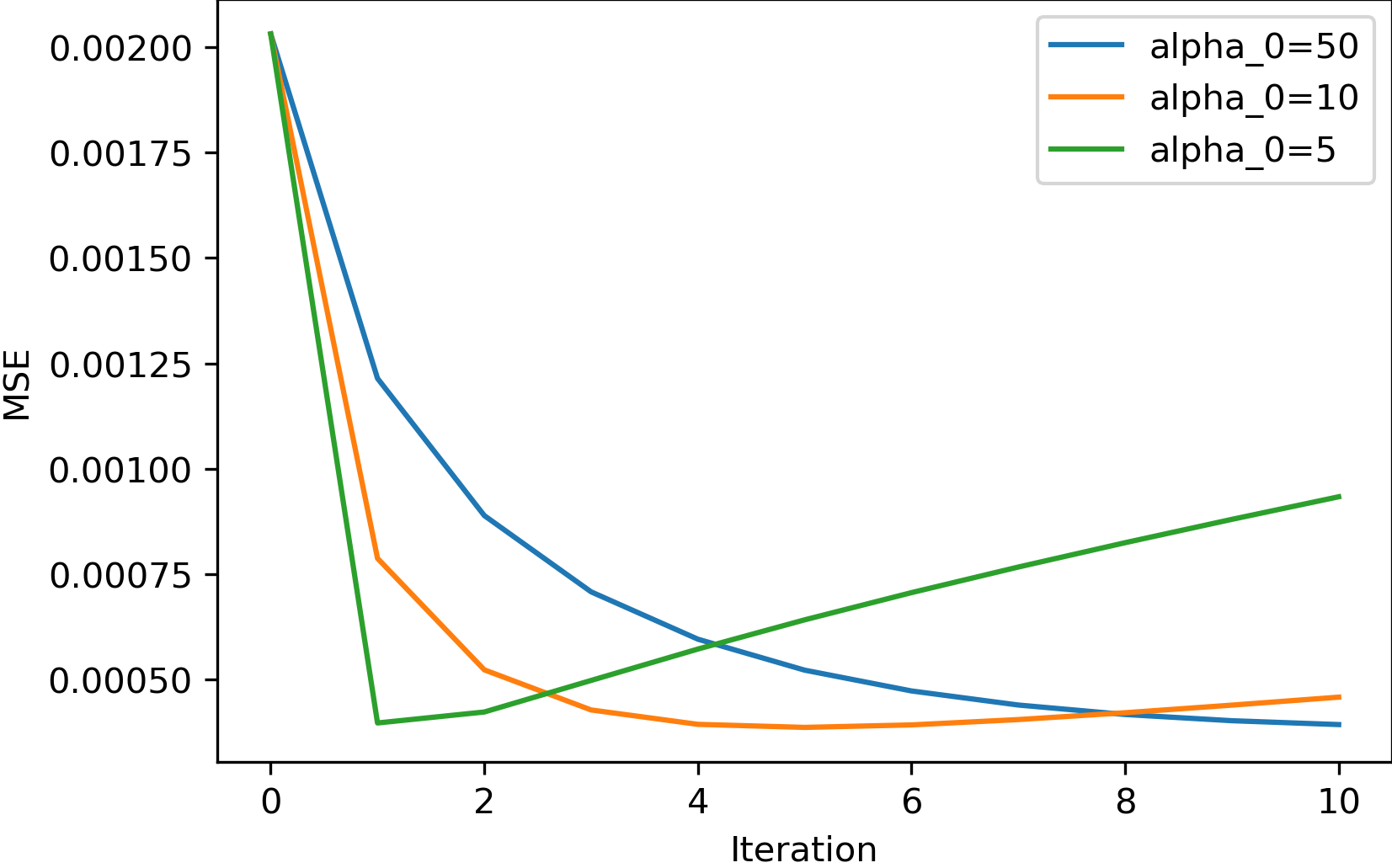}
  \subcaption{$B_1$, $k=3$, error graph}
 \end{minipage}
\end{tabular}

\begin{tabular}{c}
\hspace{-2.5cm}
 \begin{minipage}[b]{0.4\linewidth}
  \centering
  \includegraphics[keepaspectratio, scale=0.45]
  {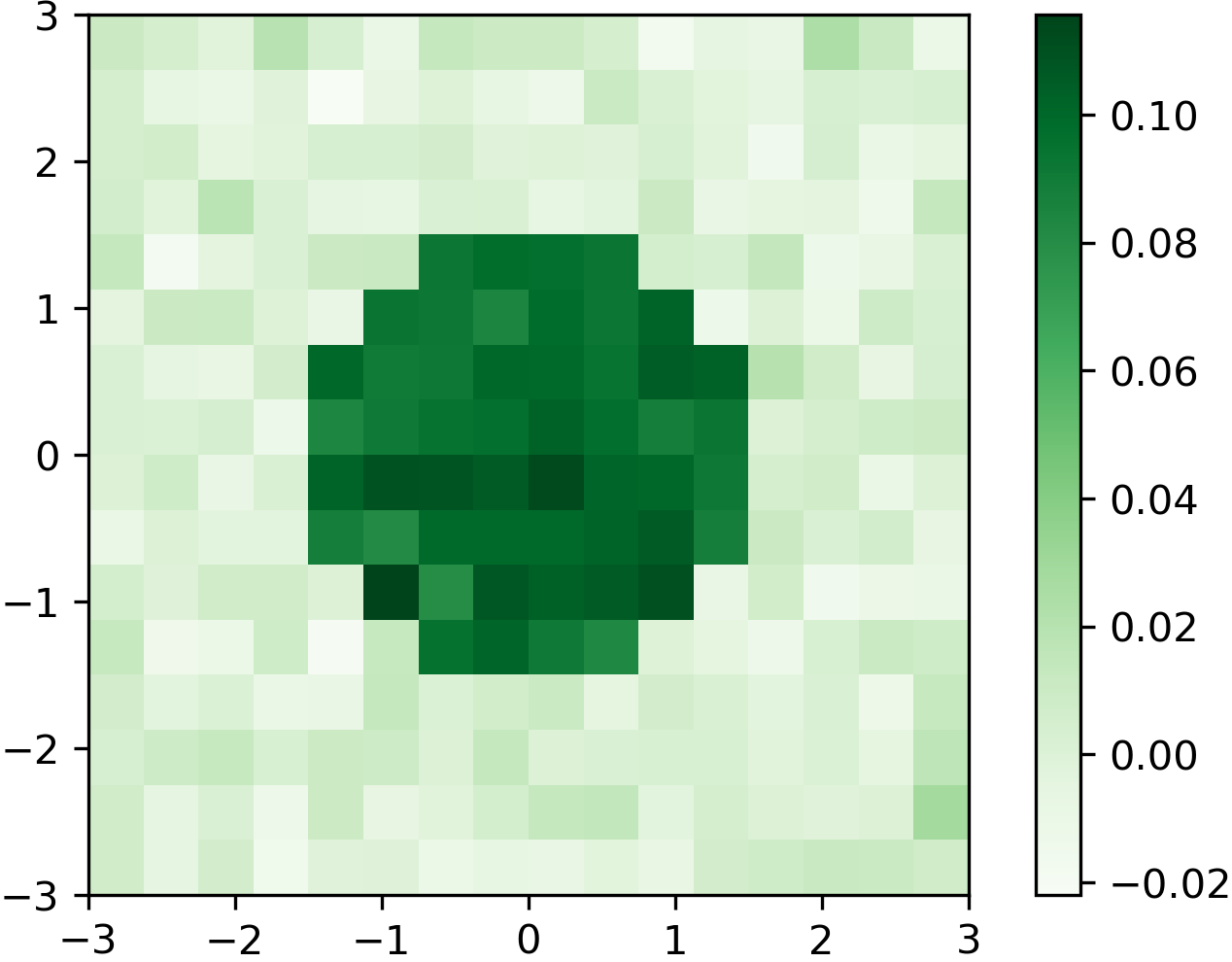}
  \subcaption{$B_1$, $k=7$, $\alpha=50$}
 \end{minipage}
 \begin{minipage}[b]{0.4\linewidth}
  \centering
  \includegraphics[keepaspectratio, scale=0.45]
  {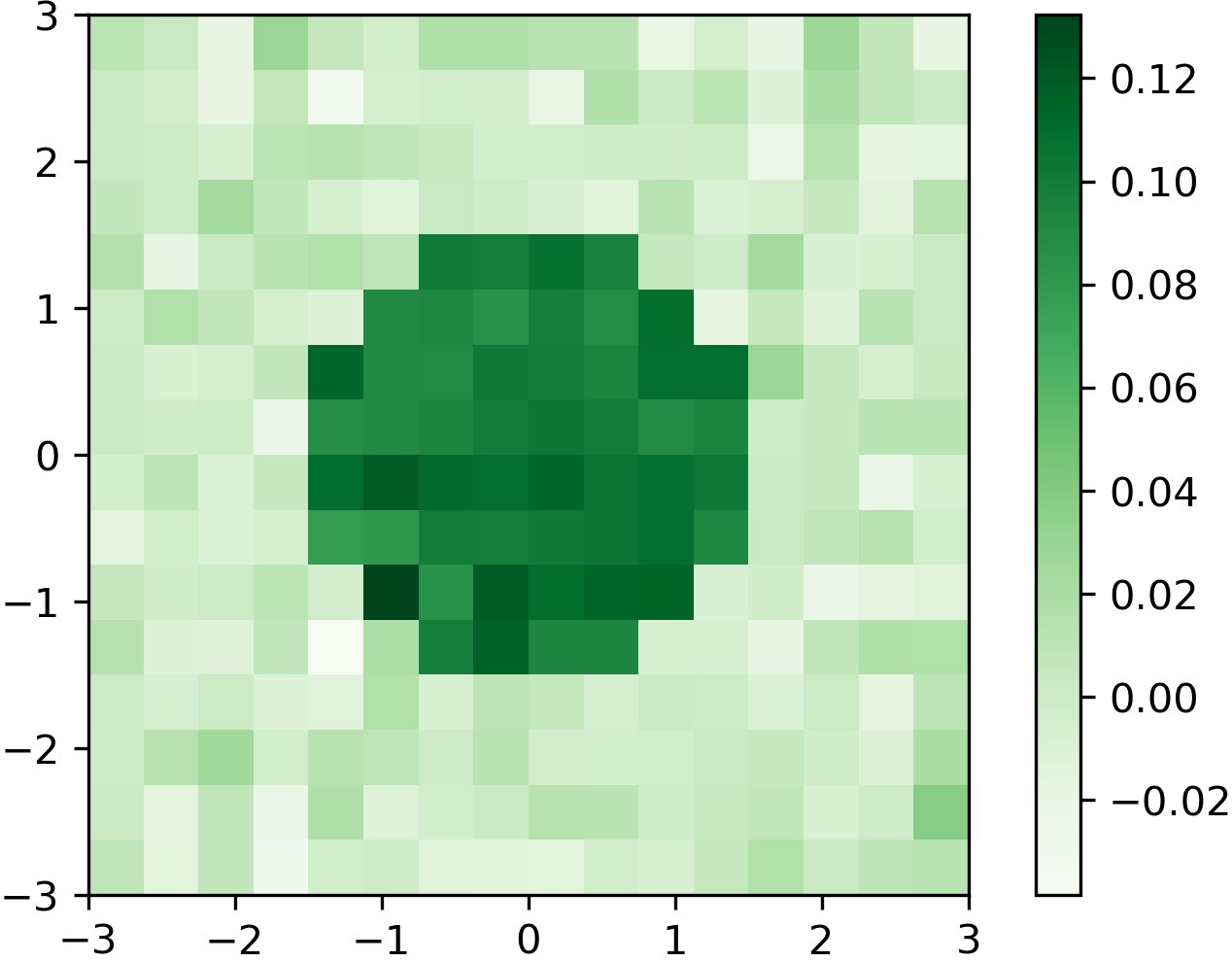}
  \subcaption{$B_1$, $k=7$, $\alpha=5$}
 \end{minipage}
 \begin{minipage}[b]{0.4\linewidth}
  \centering
  \includegraphics[keepaspectratio, scale=0.45]
  {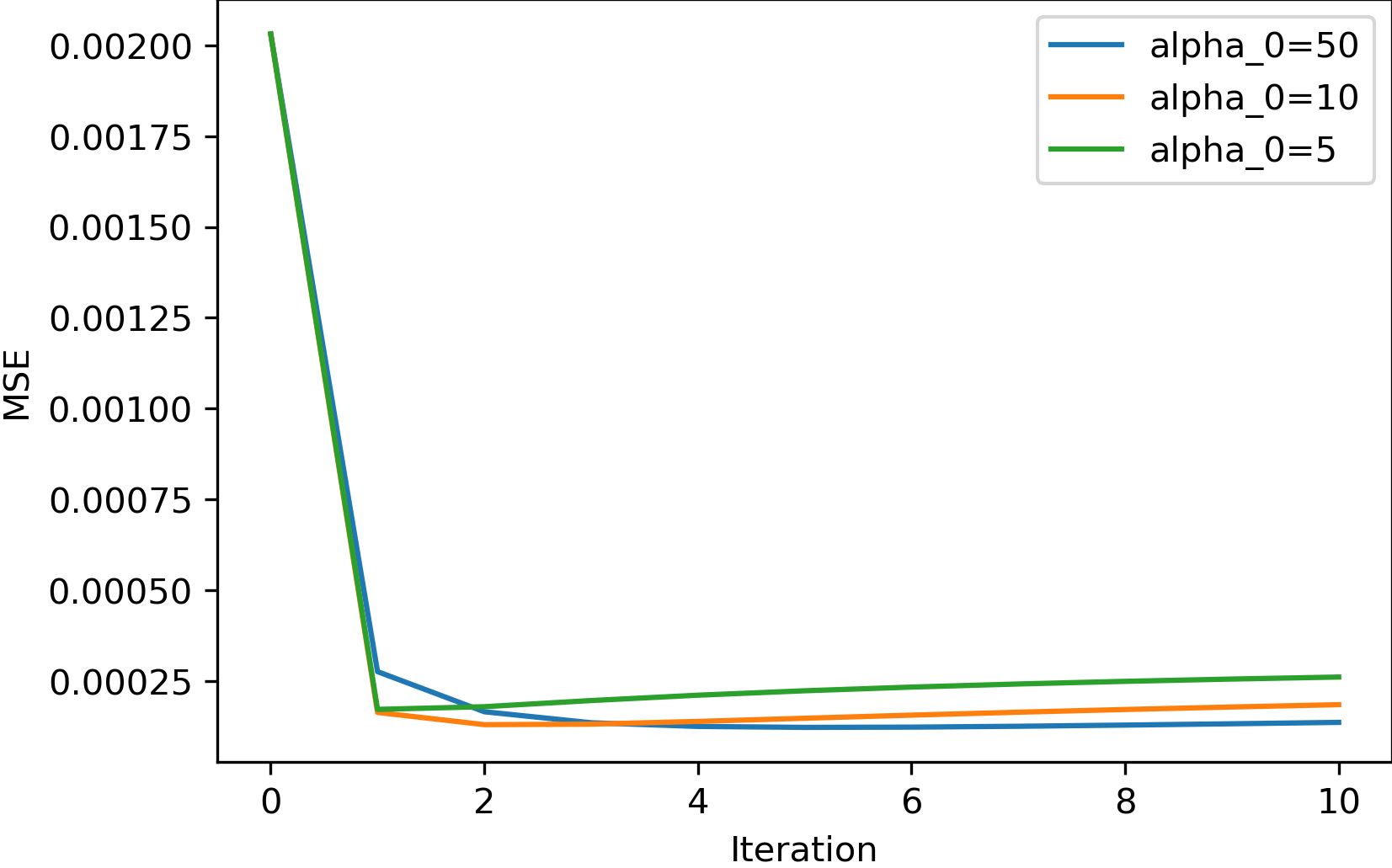}
  \subcaption{$B_1$, $k=7$, error graph}
 \end{minipage}
\end{tabular}

\begin{tabular}{c}
\hspace{-2.5cm}
 \begin{minipage}[b]{0.4\linewidth}
  \centering
  \includegraphics[keepaspectratio, scale=0.45]
  {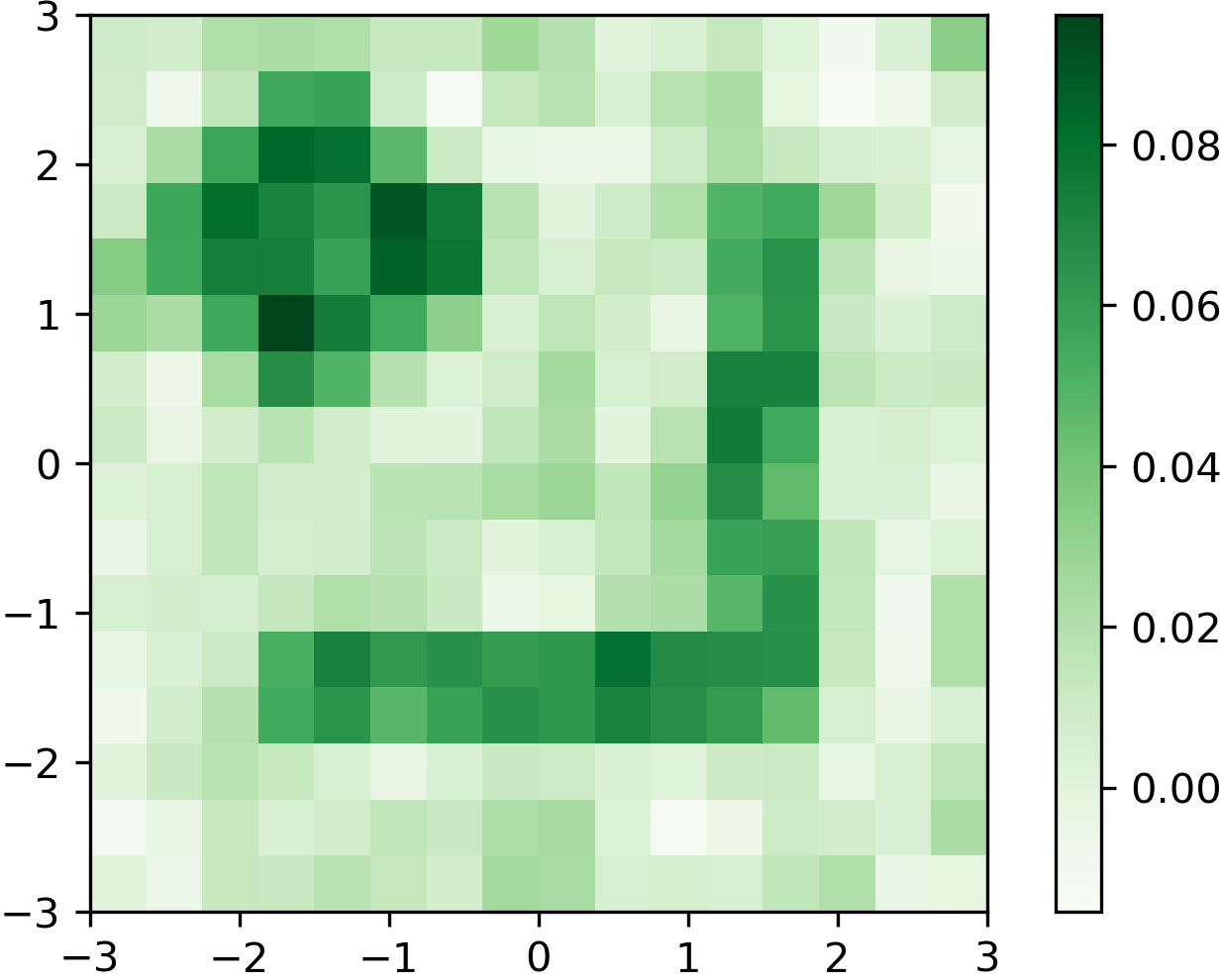}
  \subcaption{$B_2$, $k=3$, $\alpha=50$}
 \end{minipage}
 \begin{minipage}[b]{0.4\linewidth}
  \centering
  \includegraphics[keepaspectratio, scale=0.45]
  {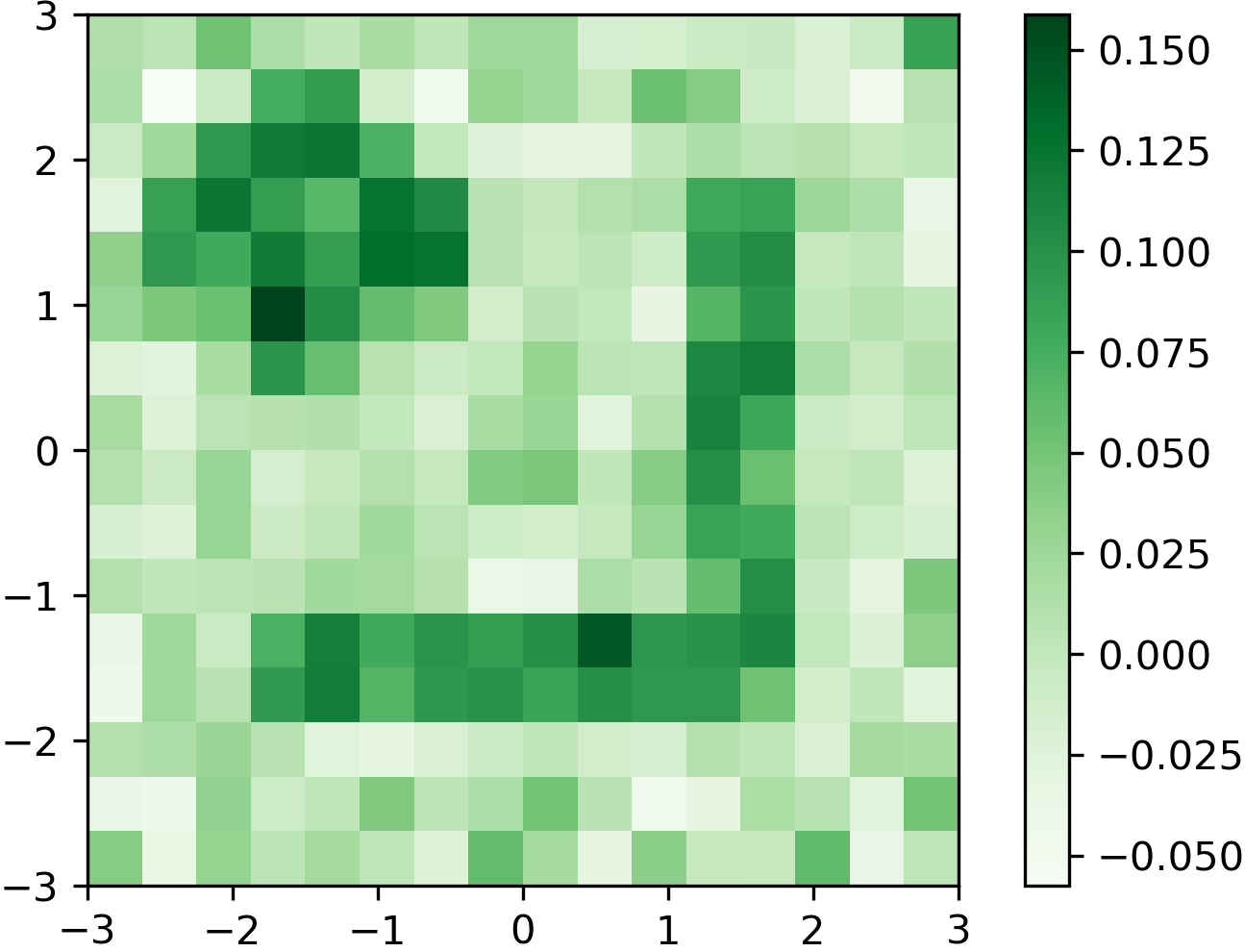}
  \subcaption{$B_2$, $k=3$, $\alpha=5$}
 \end{minipage}
 \begin{minipage}[b]{0.4\linewidth}
  \centering
  \includegraphics[keepaspectratio, scale=0.45]
  {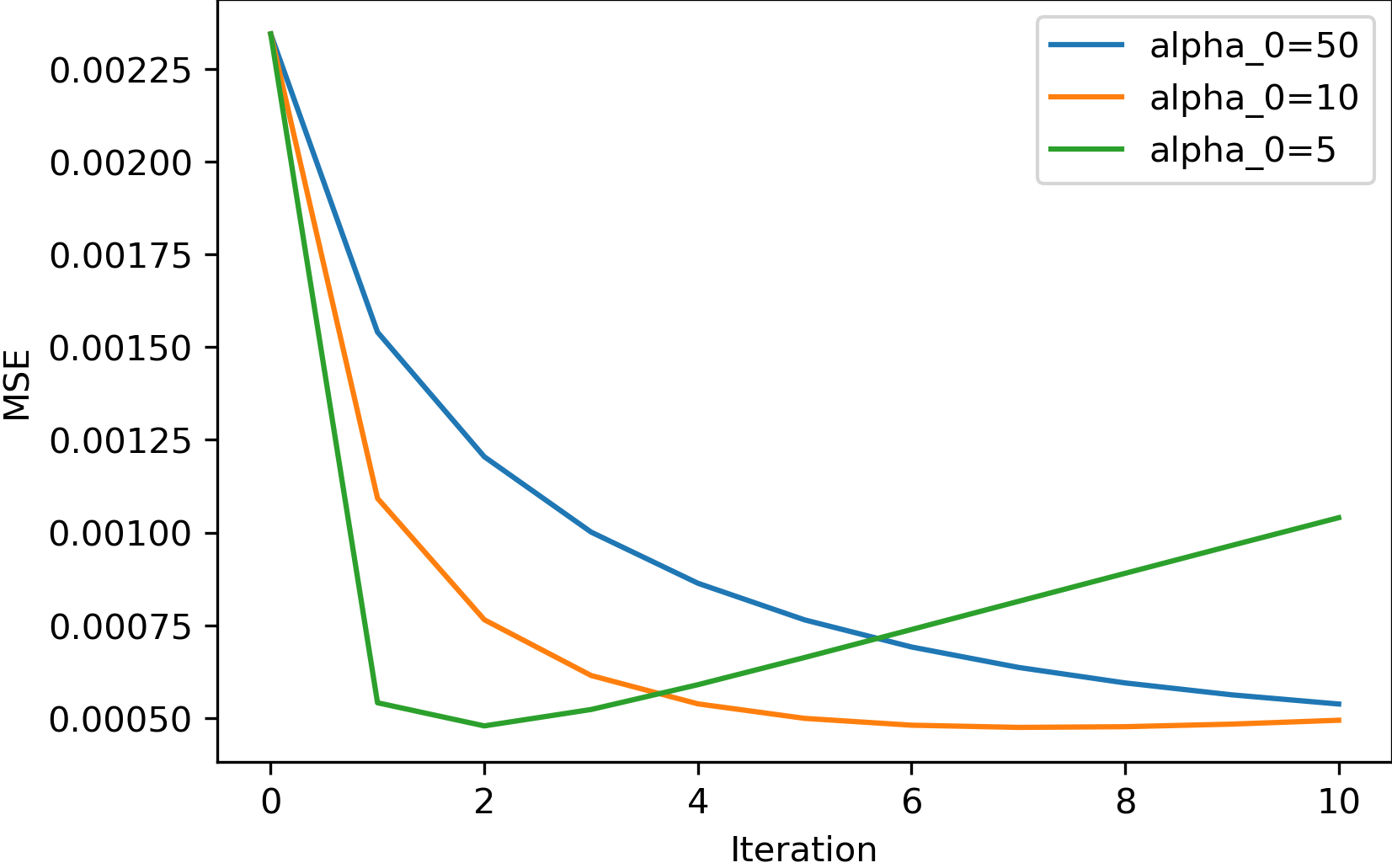}
  \subcaption{$B_2$, $k=3$, error graph}
 \end{minipage}
\end{tabular}

\begin{tabular}{c}
\hspace{-2.5cm}
 \begin{minipage}[b]{0.4\linewidth}
  \centering
  \includegraphics[keepaspectratio, scale=0.45]
  {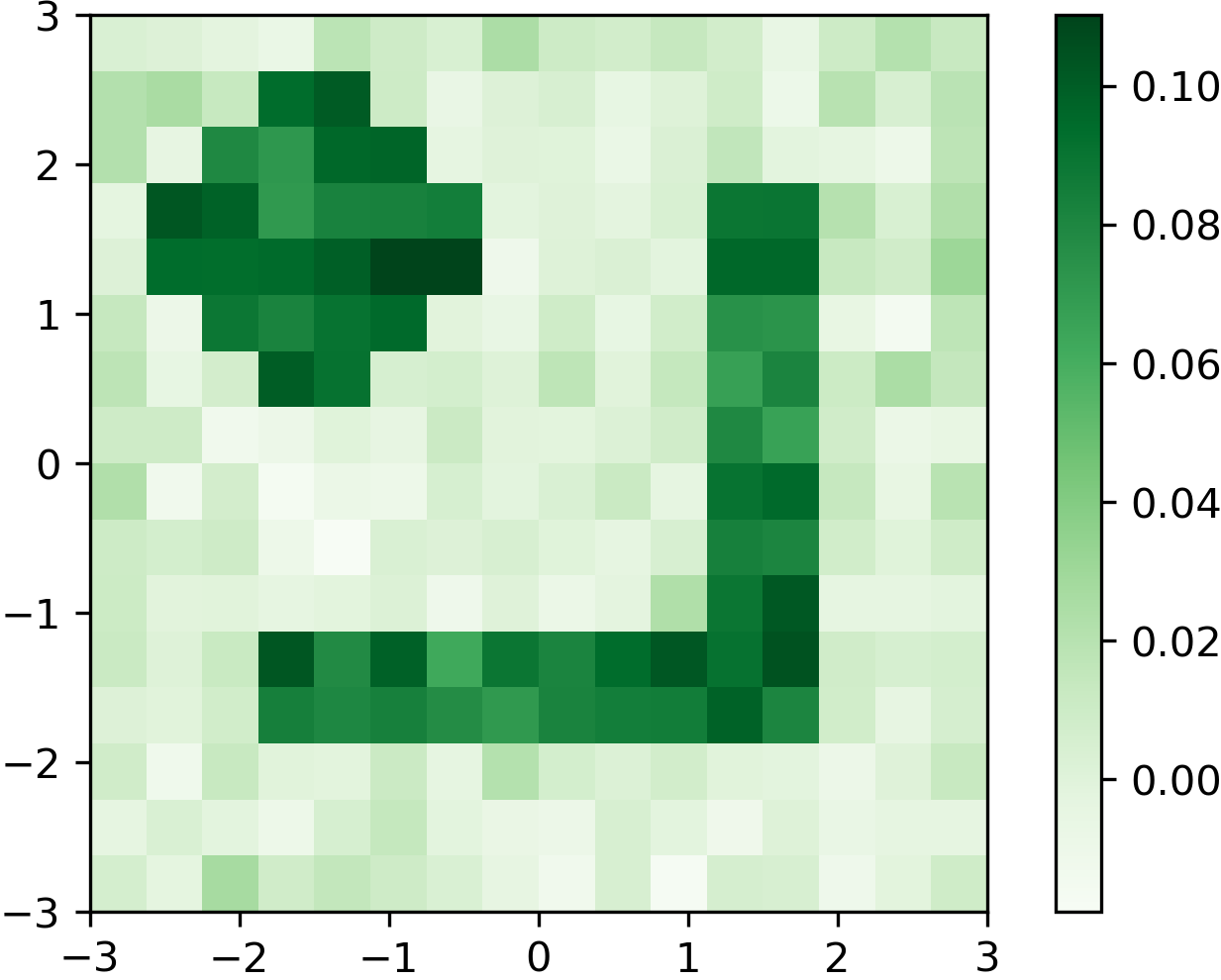}
  \subcaption{$B_2$, $k=7$, $\alpha=50$}
 \end{minipage}
 \begin{minipage}[b]{0.4\linewidth}
  \centering
  \includegraphics[keepaspectratio, scale=0.45]
  {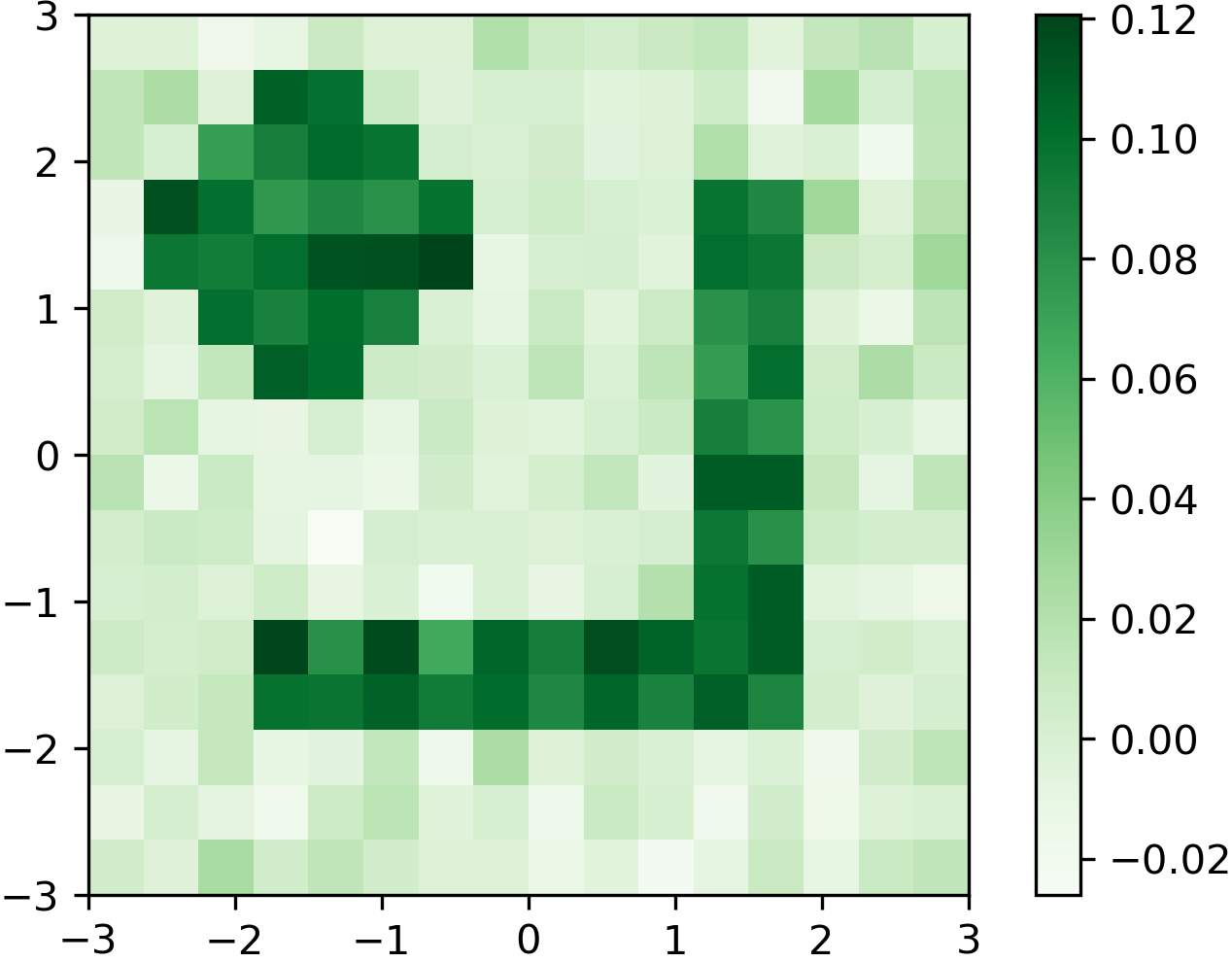}
  \subcaption{$B_2$, $k=7$, $\alpha=5$}
 \end{minipage}
 \begin{minipage}[b]{0.4\linewidth}
  \centering
  \includegraphics[keepaspectratio, scale=0.45]
  {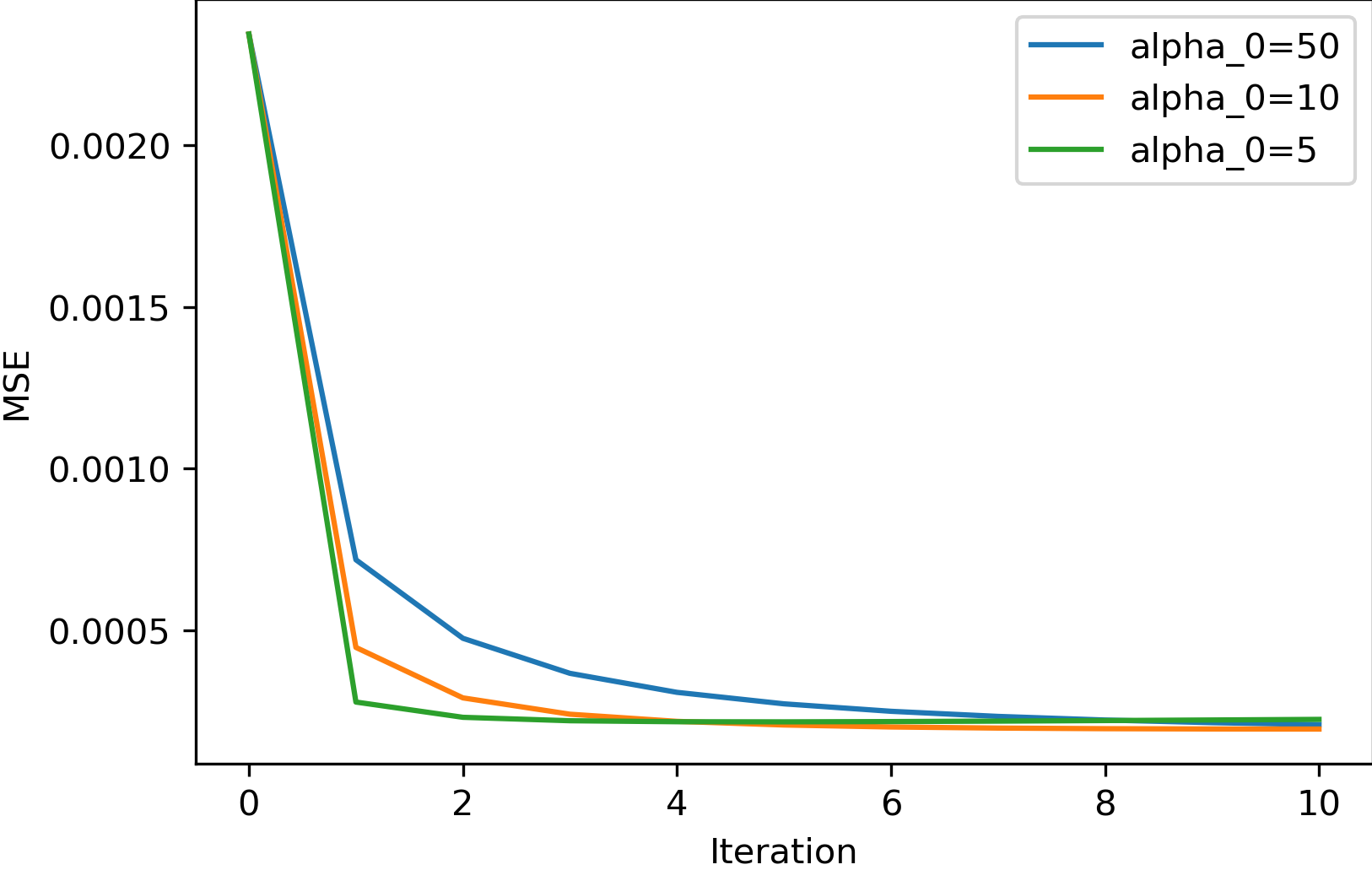}
  \subcaption{$B_2$, $k=7$, error graph}
 \end{minipage}
\end{tabular}
\caption{EKF (nosiy $\sigma=0.1$)}
\label{EKFn01}
\end{figure}

\end{document}